\documentclass[11pt]{amsart}
\usepackage{amssymb}
\usepackage{amsmath}
\usepackage{amsfonts,latexsym,amstext}

\usepackage{xcolor}
\usepackage [T1]{fontenc}
\usepackage [latin1]{inputenc}


\newtheorem{theorem}{Theorem}[section]
\newtheorem{definition}[theorem]{Definition}

\newtheorem{example}[theorem]{Example}
\newtheorem{lemma}[theorem]{Lemma}
\newtheorem{proposition}[theorem]{Proposition}
\newtheorem{corollary}[theorem]{Corollary}
\newtheorem{remark}[theorem]{Remark}

\pagestyle{headings}

\setlength{\oddsidemargin}{-0.5cm} \setlength{\topmargin}{-1cm}
\setlength{\evensidemargin}{-1.3cm} \setlength{\textwidth}{17.5cm}
\setlength{\textheight}{24cm}



\begin{document}
\parskip .2cm


\title{$M$-structures in vector-valued polynomial spaces}

\author{Ver\'{o}nica Dimant}

\author{Silvia Lassalle}

\thanks{Partially supported by CONICET-PIP 11220090100624. The second author was also partially supported by UBACyT X218 and UBACyT X038.}

\address{Departamento de Matem\'{a}tica, Universidad de San
Andr\'{e}s, Vito Dumas 284, (B1644BID) Victoria, Buenos Aires,
Argentina and CONICET.} \email{vero@udesa.edu.ar}

\address{Departamento de Matem\'{a}tica - Pab I,
Facultad de Cs. Exactas y Naturales, Universidad de Buenos Aires,
(1428) Buenos Aires, Argentina and CONICET.} \email{slassall@dm.uba.ar}

\subjclass[2010]{47H60,46B04,47L22,46B20} \keywords{$M$-ideals,
homogeneous polynomials, weakly continuous on bounded sets
polynomials}

\begin{abstract}
This paper is concerned with the study of $M$-structures in spaces of polynomials.
More precisely, we discuss for $E$ and $F$ Banach spaces, whether the class of weakly continuous  on bounded sets $n$-homogeneous polynomials, $\mathcal P_w(^n E, F)$, is an $M$-ideal in the space of continuous $n$-homogeneous polynomials $\mathcal P(^n E, F)$. We show that there is some hope for this to happen only for a finite range of values of $n$. We establish sufficient conditions under which the problem has positive and negative answers and use the obtained results to study the particular cases when $E=\ell_p$ and $F=\ell_q$ or $F$ is a Lorentz sequence space $d(w,q)$. We extend to our setting the notion of property $(M)$ introduced by Kalton  which allows us to lift $M$-structures from the linear to the vector-valued polynomial context.
Also, when $\mathcal P_w(^n E, F)$ is an $M$-ideal in $\mathcal P(^n E, F)$ we prove a Bishop-Phelps type result for vector-valued polynomials and relate norm-attaining polynomials with farthest points and remotal sets.
\end{abstract}

\maketitle

\section*{Introduction}
$M$-ideals emerged in the geometric theory of Banach spaces as a generalization, to the Banach space setting, of the closed two-sided ideals in a $C^*$-algebra. This notion, introduced by Alfsen and Effros in their seminal article \cite{AE} of 1972, leads us to a better understanding of the isometric structure of a Banach space in terms of geometric and analytic properties of the closed unit ball of the dual space. To be more precise, a closed subspace $J$ of a Banach space $X$ is an {\bf $M$-ideal} in $X$, if its annihilator, $J^\perp$, is the kernel of a projection $P$ on the dual space $X^*$ such that $\|x^*\|=\|P(x^*)\|+\|x^*-P(x^*)\|$, for all $x^*\in X^*$. When $J$ is an $M$-ideal in $X$, the canonical complement of $J^\perp$ in $X^*$ is (isometrically) identified with $J^*$. Then, we may write $X^*=J^\perp \oplus_1 J^*$, which in some sense tells us that there is a maximum norm structure underlying the geometry of the unit ball of $X$ and this structure is closely related to $J$. If it is possible to decompose $X$ as $J\oplus_\infty \widetilde J$, for some closed subspace $\widetilde J$ of $X$, we say that $J$ is an {\bf $M$-summand} of $X$. Clearly, $M$-summands are $M$-ideals, but there exist subtle differences. For instance, $c_0$ is an $M$-ideal in $\ell_\infty$ and it is not an $M$-summand. Since $M$-ideals appeared, they have been intensively studied. A comprehensive exposition of the main developments in this subject can be found in the outstanding book by  Hardmand, Werner and Werner \cite{HWW}.

The Gelfand-Naimark theorem states that any arbitrary $C^*$-algebra is isometrically $*$-isomorphic to a $C^*$-algebra of bounded operators on a Hilbert space. Here the only norm closed two-sided $*$-ideal is the subspace of compact operators. Then, it is natural to investigate under which conditions the closed subspace $J$ of compact operators between Banach spaces $E$ and $F$, $J=\mathcal K(E,F)$, results an $M$-ideal in $X=\mathcal L(E,F)$, the space of linear and bounded operators, endowed with the supremum norm. During the last thirty years a number of papers have been devoted to this question (see, for example \cite{HWW,H,K,KW,L,Oja-91,OW}), where the case $E=F$ is of special interest.

In this paper we focus our study in determining the presence of an $M$-structure in the space of continuous $n$-homogeneous polynomials between Banach spaces $E$ and $F$, denoted by $\mathcal P(^n E, F)$. Here the lack of linearity and, more specifically, the degree of homogeneity will play a crucial role. In the polynomial setting, the space of {\it compact operators} is usually replaced by the space of {\it homogeneous polynomials which are weakly continuous on bounded sets}, denoted by $\mathcal P_w(^nE, F)$. Recall that a polynomial $P\in \mathcal  P(^n E, F)$ is compact if maps the unit ball of $E$ into a relatively compact set in $F$ and that $P$ is in $\mathcal P_w(^nE, F)$ if maps bounded weak convergent nets into convergent nets. For linear operators both properties, to be compact and to be weakly continuous on bounded sets, produce the same subspace. For $n$-homogeneous polynomials with $n>1$, that coincidence is no longer true. Although any polynomial in $\mathcal P_w(^nE, F)$ is compact (as it can be derived from results in \cite{AP} and \cite{AHV}), the reverse inclusion fails. This is due to the fact that  continuous polynomials are not, in general, weak-to-weak continuous.  Then, every scalar-valued continuous polynomial is compact but it is not necessarily weakly continuous on bounded sets, as the standard example $P(x)=\sum_k x_k^2$, for all $x=(x_k)_k\in \ell_2$, shows. With this in mind, our main purpose is to discuss whether $\mathcal P_w(^nE, F)$ is an $M$-ideal in $\mathcal P(^n E, F)$. In \cite{Dim}, the first author studied the analogous question when $F$ is the scalar field. We will see that the vector-valued case is not a mere generalization of the scalar-valued case.

The problem of stating if $\mathcal P_w(^nE, F)$ is a proper subspace of $\mathcal P(^nE, F)$ is nontrivial at all. However, when this is not the situation our question is trivially answered. We refer the reader to \cite{AF,BR-98, GG, GJ},
where the equality $\mathcal P_w(^nE, F)=\mathcal P(^nE, F)$ is studied.

As it happens for $n$-homogeneous polynomials in the scalar-valued case, the value of $n$ for which  $\mathcal P_w(^nE, F)$ has the chance to be a nontrivial $M$-ideal in  $\mathcal P(^nE, F)$ cannot be chosen arbitrarily. Thus, our firsts efforts are focused to discuss this matter. In order to do so, following \cite{HWW} and \cite{Dim}, we define the essential norm of a vector-valued polynomial $P$ as the distance from $P$ to the space $\mathcal P_w(^nE, F)$. Also we describe the extreme points of the ball of the dual space of $\mathcal P_w(^nE, F)$. Then, combining this with properties of the essential norm we obtain the range within we may expect to find an $M$-structure.  When  $\mathcal P_w(^nE, F)$ is an $M$-ideal in  $\mathcal P(^nE, F)$, the essential norm allows us to obtain a Bishop-Phelps type theorem. We use this result to study the existence of farthest points and densely remotal sets. These concepts are related to geometric properties such us the existence of exposed points, the Mazur intersection property and norm attaining functions, see \cite{As, Ed}. These  results appear in Section 1.

Section 2 is dedicated to give sufficient conditions on $E$ and $F$ so that  $\mathcal P_w(^nE, F)$ is an $M$-ideal in  $\mathcal P(^nE, F)$.  The main requirement stays around the concept of shrinking approximations of the identity. When $F$ is an $M_\infty$-space, without any further assumption on the space $E$, we prove that $\mathcal P_w(^nE, F)$ is a nontrivial $M$-ideal in $\mathcal P(^nE, F)$ for all but one possible value of $n$ in the range of interest. For the remaining value of $n$, we obtain the result when $E$ satisfies some additional conditions, see Propositions~\ref{F Moo-space} and~\ref{F Moo-space-bis}.

In Section 3, we focus our attention on  classical sequence spaces $E$ and $F$, for $E=\ell_p$ ($1\le p<\infty$) and $F=\ell_q$ or $F=d(w,q)$ a Lorentz sequence space, ($1\le q<\infty$). The questions of whether $\mathcal K(\ell_p, \ell_q)$ is an $M$-ideal in $\mathcal L(\ell_p, \ell_q)$ and $\mathcal K(\ell_p, d(w,q))$  is an $M$-ideal in  $\mathcal L(\ell_p, d(w,q))$ were previously addressed in \cite{HWW} and \cite{Oja-91}. In \cite{Dim}, it was studied when $\mathcal P_w(^n\ell_p)$ is an $M$-ideal in  $\mathcal P(^n\ell_p)$. We analyze here when $\mathcal P_w(^n\ell_p, \ell_q)$ is an $M$-ideal in  $\mathcal P(^n\ell_p, \ell_q)$ and when $\mathcal P_w(^n\ell_p, d(w,q))$ is an $M$-ideal in  $\mathcal P(^n\ell_p, d(w,q))$. Giving conditions on $n, p, q$ and $w$ we solve the problem for all the possible situations.

In the last section we study the property $(M)$, introduced by Kalton in \cite{K} for Banach spaces,  developed later for operators by Kalton and  Werner in \cite{KW} and finally generalized to the scalar-valued polynomial setting in \cite{Dim}. Here, we present a natural extension to the vector-valued polynomial setting of the notions mentioned before and establish the connection this property has with our main problem. We apply the results obtained to give examples of $M$-ideals in vector-valued polynomial spaces defined on Bergman spaces.

Before proceeding, we fix some notation and give basic definitions. Every time we write $X, E$  or $F$ we will be considering Banach spaces over the real or complex field, $\mathbb{K}$. The closed unit ball of $X$ will be noted by $B_X$ and the unit sphere by $S_X$. Also, if $x\in X$ and $r>0$, $B(x,r)$ will stand for the closed ball in $X$ with center at $x$ and radius $r$. As usual, $X^*$ and $X^{**}$ will be the notations for the dual and bidual of $X$, respectively. The space of linear bounded operators from $E$ to $E$ will be noted by $\mathcal{L}(E)$ and its subspace of compact mappings will be noted by $\mathcal{K}(E)$.

A function $P\colon E\to F$ is said to be an $n$-homogeneous polynomial if there exists a (unique) symmetric $n$-linear form $\overset\vee P\colon\underbrace{E\times\cdots\times E}_n\to
F$ such that
$$
P(x)=\overset\vee P(x,\dots,x),
$$
for all $x\in E$. For scalar-valued mappings we will write $\mathcal{P}(^nE)$ instead of $\mathcal P(^nE,F)$ to denote the space of all continuous $n$-homogeneous polynomials from $E$ to $\mathbb{K}$.  The space $\mathcal P(^nE,F)$ endowed with the supremum norm
$$
\|P\|=\sup\{\|P(x)\|_F\colon \, x\in B_E\},
$$ is a Banach space. We may write $\|P(x)\|$ instead of $\|P(x)\|_F$ unless we prefer to emphasize the space where the norm is taken.

Every polynomial $P\in\mathcal{P}(^nE,F)$ has two natural mappings associated: the {\it linear adjoint} or {\it transpose} $P^*\in \mathcal{L}(F^*,\mathcal{P}(^nE))$ which
is given by
$$
(P^*(y^*))(x)=y^*(P(x)), \textrm{ for every }x\in E \textrm{ and } y^*\in F^*,
$$
and the polynomial $\overline{P}\in \mathcal P(^n E^{**}, F^{**})$, the canonical extension of $P$ from $E$ to $E^{**}$ obtained by weak-star density, known as the Aron-Berner extension of $P$ \cite{AB}. For each $z\in E^{**}$, $e_z$ will refer to the application given by $e_z(P)=\overline{P}(z)$; for $x\in E$, $e_x$ denotes the evaluation map.

Besides the subspace of  weakly continuous $n$-homogeneous polynomials on bounded sets which was already introduced, we will consider the following classes. The first one is the space of $n$-homogeneous polynomials that are weakly continuous on bounded sets at 0, which consists on those polynomials mapping bounded weakly null nets into null nets. This space will be denoted by  $\mathcal{P}_{w0}(^nE, F)$. We also have the subspace formed by polynomials of finite type, which are of the form   $\sum_{j=1}^N (x^*_j)^n\cdot y_j$, with $x^*_j\in E^*$, $y_j\in F$ for all $j=1,\ldots, N$ and $N\in \mathbb N$. The space of finite type $n$-homogeneous polynomials will be denoted by
$\mathcal{P}_f(^nE, F)$. Its closure (in the supremum norm) is the space of approximable $n$-homogeneous polynomials which will be noted by $\mathcal{P}_A(^nE, F)$. When $F$ is $\mathbb K$ we omit $F$ and write
$\mathcal{P}_{w0}(^nE), \mathcal{P}_f(^nE)$ or $\mathcal{P}_A(^nE)$ for instance.

Recall that if $E$ does not contain a subspace isomorphic to $\ell_1$, then, for any Banach space $F$, $\mathcal{P}_w(^nE, F)$ coincides with the space of weakly sequentially continuous polynomials $\mathcal{P}_{wsc}(^nE, F)$
\cite[Proposition 2.12]{AHV}. The space of $n$-homogeneous polynomials that are weakly sequentially continuous at 0 will be denoted $\mathcal{P}_{wsc0}(^nE, F)$. We refer to \cite{Di, Mu} for the necessary background on polynomials on Banach spaces.
\smallskip

Related to the study of $M$-structures there are two relevant geometric properties that we will use repeatedly. The first one is a well-known characterization, called {\it the 3-ball property}, given by Alfsen and Effros in \cite[Theorem A]{AE} to which the main part of their article is dedicated, see also \cite[Theorem I.2.2 (iv)]{HWW}:
\medskip

\noindent {\bf Theorem A.} {\it Suppose that $J$  is a closed subspace of $X$. The following are equivalent:
\begin{enumerate}
\item[(i)] $J$ is an $M$-ideal.
\item[(ii)] $J$ satisfies the 3-ball property: for every $x_1, x_2, x_3\in X$ and positive numbers $r_1, r_2, r_3$ such that

$$
\bigcap_{j=1}^3 B(x_j, r_j)\ne \emptyset \qquad  and \qquad B(x_j, r_j)\cap J\ne \emptyset,\qquad j=1, 2, 3,
$$
it holds that
$$
\bigcap_{j=1}^3 B(x_j, r_j+\varepsilon )\cap J\ne \emptyset  \qquad  for\ all\ \varepsilon >0.
$$

\item[(iii)] $J$ satisfies the (restricted) 3-ball property: for every $y_1,y_2,y_3\in B_J$, $x\in B_X$ and $\varepsilon >0$, there exists $y\in J$ satisfying
	$$
	\|x+y_j-y\|\leq 1+\varepsilon, \qquad j=1,2,3.
	$$
\end{enumerate}
}
\medskip
Note that one of the benefits of having the 3-ball property is that we have a criterium to decide if a closed subspace of a Banach space $X$ is an $M$-ideal in terms of an intersection of balls in $X$. Thus, there is no need to appeal to the dual space to determine the existence of an $M$-structure. The $2$-ball property is not sufficient to this end, see \cite{HWW}. When a closed subspace of $X$ satisfies the 2-ball property we say that we are in presence of a {\bf semi $M$-ideal} structure.

The second property we referred, provides us with a nice description of the extreme points of the unit ball of $X^*$ in terms of the sets of the extreme points of the unit balls of $J^\perp$ and $J^*$, if $J$ is an $M$-ideal in $X$, see \cite[Lemma 1.5]{HWW}. As usual $Ext(B_{X})$ denotes the set of extreme points of the unit ball of a Banach space $X$.
\medskip

\noindent {\bf Theorem B.} {\it Suppose that $J$  is an $M$-ideal in $X$. Then, the extreme points of the unit ball of $X^*$ satisfy
$$
Ext(B_{X^*})=Ext(B_{J^\perp})\cup Ext(B_{J^*}).
$$
}

Many authors investigated $M$-structures on Banach spaces. Hardmand, Werner and Werner summarized the main results on this topic in their monograph \cite{HWW}. The reader will find out that it is a very clear and well-organized survey on $M$-ideals. Along this paper, we will recourse to the ideas and results in it.
\bigskip

\section{General results}

It is natural to begin our research with  vector-valued polynomial versions of basic results stated for linear operators in \cite[Propositions VI.4.2 and VI.4.3]{HWW} and for scalar-valued polynomials in \cite[Propositions 1.1 and 1.2]{Dim}. We omit the proofs since they are straightforward.

\begin{proposition} \label{M-sumando}
\begin{enumerate}
\item[(a)]If $\mathcal{P}_w(^nE,F)$ is an $M$-summand in $\mathcal{P}(^nE,F)$,
then $\mathcal{P}_w(^nE,F)=\mathcal{P}(^nE,F)$.

\item[(b)] If $\mathcal{P}_w(^nE,F)$ is an $M$-ideal in $\mathcal{P}(^nE,F)$ and $E_1\subset E$, $F_1\subset F$ are 1-complemented subspaces, then
$\mathcal{P}_w(^nE_1,F_1)$ is an $M$-ideal in $\mathcal{P}(^nE_1,F_1)$.

\item[(c)] The class of Banach spaces $E$ and $F$ for which $\mathcal{P}_w(^nE,F)$ is an $M$-ideal in
$\mathcal{P}(^nE,F)$ is closed with respect to the Banach-Mazur
distance.
\end{enumerate}
\end{proposition}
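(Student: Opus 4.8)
The plan is to treat all three items as routine transcriptions of \cite[Propositions VI.4.2 and VI.4.3]{HWW} and \cite[Propositions 1.1 and 1.2]{Dim}; the only mild point specific to the homogeneous setting is that every linear transport used below contributes a factor $\|\cdot\|^n$ to the norm estimates. The recurring tool for (b) and (c) is the restricted $3$-ball property of Theorem~A\,(iii), together with the elementary observation that, for bounded linear maps $S\colon E'\to E$ and $T\colon F\to F'$, the assignment $P\mapsto T\circ P\circ S$ sends $\mathcal{P}(^nE,F)$ into $\mathcal{P}(^nE',F')$ and $\mathcal{P}_w(^nE,F)$ into $\mathcal{P}_w(^nE',F')$ --- a bounded linear operator is weak-to-weak continuous, hence carries bounded weakly convergent nets to bounded weakly convergent nets --- with the bound $\|T\circ P\circ S\|\le\|T\|\,\|S\|^n\,\|P\|$. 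Since in (c) the Banach--Mazur factors can be taken arbitrarily close to $1$, there is no genuine obstacle in any of the three parts.

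For (a) I would argue by contradiction. If $\mathcal{P}_w(^nE,F)$ is an $M$-summand, write $\mathcal{P}(^nE,F)=\mathcal{P}_w(^nE,F)\oplus_\infty\widetilde J$ and suppose $\widetilde J\neq\{0\}$; pick $R\in\widetilde J$ with $\|R\|=1$, choose $x_0\in S_E$ with $R(x_0)\neq 0$, put $z_0=R(x_0)/\|R(x_0)\|\in S_F$, take $\varphi\in S_{E^*}$ with $\varphi(x_0)=1$, and set $Q=\varphi^n\cdot z_0$. Then $Q\in\mathcal{P}_f(^nE,F)\subseteq\mathcal{P}_w(^nE,F)$ with $\|Q\|=1$, so the $\ell_\infty$-decomposition gives $\|Q+R\|=\max(\|Q\|,\|R\|)=1$; but $(Q+R)(x_0)=\bigl(1+\|R(x_0)\|\bigr)z_0$ has norm $1+\|R(x_0)\|>1=\|Q+R\|\,\|x_0\|^n$, a contradiction. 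Hence $\widetilde J=\{0\}$, i.e.\ $\mathcal{P}_w(^nE,F)=\mathcal{P}(^nE,F)$.

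For (b), let $\rho\colon E\to E_1$ and $\sigma\colon F\to F_1$ be norm-one projections and $\iota_E\colon E_1\hookrightarrow E$, $\iota_F\colon F_1\hookrightarrow F$ the isometric inclusions, so $\rho\iota_E=\mathrm{id}_{E_1}$ and $\sigma\iota_F=\mathrm{id}_{F_1}$. Given $P_1,P_2,P_3\in B_{\mathcal{P}_w(^nE_1,F_1)}$, $Q\in B_{\mathcal{P}(^nE_1,F_1)}$ and $\varepsilon>0$, I would lift them to $E,F$ by $\widetilde P_j=\iota_F\circ P_j\circ\rho$ and $\widetilde Q=\iota_F\circ Q\circ\rho$; by the observation above $\widetilde P_j\in B_{\mathcal{P}_w(^nE,F)}$ and $\widetilde Q\in B_{\mathcal{P}(^nE,F)}$. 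Applying Theorem~A\,(iii) to the $M$-ideal $\mathcal{P}_w(^nE,F)$ produces $\widetilde R\in\mathcal{P}_w(^nE,F)$ with $\|\widetilde Q+\widetilde P_j-\widetilde R\|\le 1+\varepsilon$ for $j=1,2,3$, and then $R:=\sigma\circ\widetilde R\circ\iota_E\in\mathcal{P}_w(^nE_1,F_1)$. Evaluating on $B_{E_1}$ and using $\rho\iota_E=\mathrm{id}_{E_1}$, $\sigma\iota_F=\mathrm{id}_{F_1}$ and $\|\sigma\|=\|\iota_E\|=1$ one gets $\|Q+P_j-R\|\le 1+\varepsilon$, so $\mathcal{P}_w(^nE_1,F_1)$ satisfies the restricted $3$-ball property in $\mathcal{P}(^nE_1,F_1)$ and is therefore an $M$-ideal.

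For (c), suppose that for every $\delta>0$ there are Banach spaces $E_\delta,F_\delta$ and isomorphisms $u\colon E\to E_\delta$, $v\colon F\to F_\delta$ with $\|u\|\,\|u^{-1}\|<1+\delta$ and $\|v\|\,\|v^{-1}\|<1+\delta$, for which $\mathcal{P}_w(^nE_\delta,F_\delta)$ is an $M$-ideal in $\mathcal{P}(^nE_\delta,F_\delta)$. Given $P_1,P_2,P_3\in B_{\mathcal{P}_w(^nE,F)}$, $Q\in B_{\mathcal{P}(^nE,F)}$ and $\varepsilon>0$, I would fix $\delta$ small (in terms of $n$ and $\varepsilon$), transport to $E_\delta,F_\delta$ via $P\mapsto P^\sharp:=v\circ P\circ u^{-1}$ (so $\|P^\sharp\|\le\|v\|\,\|u^{-1}\|^n\,\|P\|=:C\,\|P\|$), rescale by $C$, apply the restricted $3$-ball property in $\mathcal{P}(^nE_\delta,F_\delta)$ to $Q^\sharp/C$ and $P_j^\sharp/C$ to obtain $R_\delta\in\mathcal{P}_w(^nE_\delta,F_\delta)$ with $\|Q^\sharp/C+P_j^\sharp/C-R_\delta\|\le1+\varepsilon'$, and transport $C\,R_\delta$ back via $S\mapsto S^\flat:=v^{-1}\circ S\circ u$. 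Since $(P^\sharp)^\flat=P$, the element $R:=(C\,R_\delta)^\flat\in\mathcal{P}_w(^nE,F)$ satisfies $\|Q+P_j-R\|\le\|v^{-1}\|\,\|u\|^n\cdot C(1+\varepsilon')=(\|v\|\,\|v^{-1}\|)(\|u\|\,\|u^{-1}\|)^n(1+\varepsilon')\le(1+\delta)^{n+1}(1+\varepsilon')$, which is $\le1+\varepsilon$ for a suitable choice of $\delta$ and $\varepsilon'$. This verifies the restricted $3$-ball property for $\mathcal{P}_w(^nE,F)$ in $\mathcal{P}(^nE,F)$, proving that it is an $M$-ideal. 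The only thing to keep an eye on throughout is the harmless appearance of the powers $\|\cdot\|^n$ and the fact, already noted, that the transported polynomials stay weakly continuous on bounded sets.
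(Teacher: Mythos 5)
Your proposal is correct, and since the paper omits these proofs as straightforward adaptations of \cite[Propositions VI.4.2 and VI.4.3]{HWW} and \cite[Propositions 1.1 and 1.2]{Dim}, your arguments are exactly the intended ones: the rank-one perturbation contradiction for (a), and transport of the restricted $3$-ball property via composition with norm-controlled linear maps for (b) and (c), with the only polynomial-specific point being the exponent $n$ on the domain-side norms and the (correctly justified) preservation of weak continuity on bounded sets under such compositions.
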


The knowledge of the extreme points of the unit ball of a Banach space provides a crucial tool in the geometric study of the space. We borrow some ideas of \cite{HWW} and \cite{Dim} to examine the extreme points of the unit ball of the dual spaces: $\mathcal{P}(^nE,F)^*$ and $\mathcal{P}_w(^nE,F)^*$.

Note that if $J$ is a subspace of $\mathcal{P}(^nE,F)$ that contains $\mathcal{P}_f(^nE,F)$, then
$e_x\otimes y^*\in J^*$ is a norm one element, for all $x\in S_E$ and $y^*\in S_{F^*}$. Indeed, the  application $e_x\otimes y^*$ belongs to $B_{J^*}$ and since $J$ contains all finite type $n$-homogeneous polynomials, it contains the elements of the form $(x^*)^n\cdot y$, for every  $x^*\in E^*$ and $y\in F$, thus $\|e_x\otimes y^*\|=1$.

\begin{proposition}\label{extremales}
\begin{enumerate}
\item[(a)] If $J$ is a subspace of $\mathcal{P}(^nE,F)$  that contains all finite type $n$-homogeneous polynomials, then
$$
Ext B_{J^*}\subset \overline{\big\{ e_x\otimes y^*
:x\in S_E,\, y^*\in S_{F^*}\big\}}^{w^*},
$$
where $w^*$ designates the topology $\sigma(J^*,J)$.

\item[(b)]
For the particular case $J=\mathcal{P}_w(^nE,F)$ we can be more precise:
$$
Ext B_{\mathcal{P}_w(^nE,F)^*}\subset \{e_z\otimes y^*:z\in S_{E^{**}},\, y^*\in S_{F^*}\big\}.
$$
\end{enumerate}
\end{proposition}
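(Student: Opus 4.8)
The plan is to establish part (a) first and then refine the argument to obtain part (b). For part (a), the natural tool is the Milman-type converse to the Krein-Milman theorem: if a weak-$*$ compact convex set $K \subseteq J^*$ is the closed convex hull of a set $D$, then $\mathrm{Ext}\, K \subseteq \overline{D}^{w^*}$. So I would let $D = \{ e_x \otimes y^* : x \in S_E,\ y^* \in S_{F^*}\}$ and show that $B_{J^*} = \overline{\mathrm{conv}}^{w^*}(D \cup (-D))$ — or, if $\mathbb{K} = \mathbb{C}$, that $B_{J^*}$ is the weak-$*$ closed absolutely convex hull of $D$. The inclusion $\overline{\mathrm{aco}}^{w^*}(D) \subseteq B_{J^*}$ is immediate since each $e_x \otimes y^*$ lies in $B_{J^*}$ (as noted in the paragraph preceding the proposition). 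For the reverse inclusion, suppose $\varphi \in J^*$ is not in the weak-$*$ closed absolutely convex hull $C$ of $D$; by Hahn-Banach separation in the dual pair $(J^*, J)$ there is $P \in J$ and a real constant with $\mathrm{Re}\,\psi(P) \le c < \mathrm{Re}\,\varphi(P)$ for all $\psi \in C$. Testing against $\pm e_x \otimes y^*$ (and rotations by unimodular scalars in the complex case) gives $\sup_{x \in S_E,\, y^* \in S_{F^*}} |y^*(P(x))| \le c$, i.e. $\|P\| \le c$. Since $\varphi \in B_{J^*}$ would force $\mathrm{Re}\,\varphi(P) \le \|P\| \le c$, a contradiction, we conclude $B_{J^*} \subseteq C$, and Milman's theorem finishes part (a).

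For part (b), with $J = \mathcal{P}_w(^nE,F)$, the point is that one can replace the weak-$*$ closure by an exact description because weakly continuous polynomials extend nicely to the bidual. The key observation is that for $P \in \mathcal{P}_w(^nE,F)$ and $z \in S_{E^{**}}$, the Aron--Berner extension $\overline P$ satisfies $\overline P(B_{E^{**}}) \subseteq \overline{P(B_E)}^{\,\|\cdot\|}$ (this is essentially Davie--Gamelin; it uses precisely that $P$ is weakly continuous on bounded sets, equivalently that $\overline P$ is weak-$*$-to-norm continuous on bounded sets of $E^{**}$). Hence $e_z \otimes y^* \in B_{J^*}$ for every $z \in S_{E^{**}}$, $y^* \in S_{F^*}$. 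Now I would argue that the set $\Lambda = \{ e_z \otimes y^* : z \in S_{E^{**}},\ y^* \in S_{F^*}\}$ is already weak-$*$ compact: by Banach--Alaoglu $B_{E^{**}}$ is weak-$*$ compact and $B_{F^*}$ is weak-$*$ compact, the map $(z,y^*) \mapsto e_z \otimes y^*$ is separately (hence jointly, on the product of compacta after checking continuity against each fixed $P \in J$) weak-$*$ continuous using again that $\overline P$ is weak-$*$ continuous on bounded sets, and taking $z \in S_{E^{**}}$ rather than $B_{E^{**}}$ together with $y^* \in S_{F^*}$ can be arranged since $e_z \otimes y^*$ with $\|z\|<1$ or $\|y^*\|<1$ is a proper convex combination of points of $\Lambda$ and of $0$ — so it is never extreme, and what matters is that $\overline{D}^{w^*} \subseteq \Lambda \cup \{\text{non-extreme stuff}\}$. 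More cleanly: $\Lambda \cup \{0\}$ (or its absolutely convex hull) is weak-$*$ compact, so from part (a), $\mathrm{Ext}\,B_{J^*} \subseteq \overline{D}^{w^*} \subseteq \overline{\mathrm{aco}}^{w^*}(\Lambda)$, and then a separate check shows the only extreme points available in this hull are the points of $\Lambda$ themselves.

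The main obstacle I anticipate is the weak-$*$ compactness / closedness of $\Lambda$ in part (b): one must verify that a net $e_{z_\alpha} \otimes y^*_\alpha$ with $z_\alpha \in S_{E^{**}}$, $y^*_\alpha \in S_{F^*}$ converging weak-$*$ in $J^*$ has its limit again of the form $e_z \otimes y^*$ with $z \in S_{E^{**}}$ and $y^* \in S_{F^*}$ — in particular that no mass "escapes" and the norms stay equal to $1$, which is exactly where the norm-density property $\overline P(B_{E^{**}}) \subseteq \overline{P(B_E)}$ of Aron--Berner extensions of weakly continuous polynomials is indispensable (it rules out the degeneration that would occur for a general continuous polynomial). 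A secondary technical point is handling the real versus complex scalar cases uniformly in the separation argument of part (a); I would phrase everything in terms of absolutely convex hulls so that the unimodular scalars are absorbed from the start.
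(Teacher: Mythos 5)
Your proposal is correct and follows essentially the same route as the paper: part (a) via Hahn--Banach separation plus Milman's theorem applied to the weak-$*$ closed absolutely convex hull of $\{e_x\otimes y^*: x\in S_E,\ y^*\in S_{F^*}\}$, and part (b) via the weak-$*$-to-norm continuity on bounded sets of the Aron--Berner extension of a weakly continuous (hence compact) polynomial, which places the weak-$*$ closure of that set inside $\{e_z\otimes y^*: z\in B_{E^{**}},\ y^*\in B_{F^*}\}$, the sphere conclusion then following because extreme points have norm one. The only slip is the claim that $\Lambda=\{e_z\otimes y^*: z\in S_{E^{**}},\ y^*\in S_{F^*}\}$ is itself weak-$*$ compact --- it need not be weak-$*$ closed, since $e_{z_\alpha}\otimes y_\alpha^*$ with $z_\alpha\in S_{E^{**}}$ can converge to $e_z\otimes y^*$ with $\|z\|<1$ --- but this is harmless, because your argument really only needs the compactness of the image of the full balls together with the norm-one/extremality observation, which is exactly how the paper concludes.
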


\begin{proof}
(a) Through Hahn-Banach theorem and the comment made above, it easily follows that
$$
B_{J^*}=\overline{\Gamma\big\{ e_x\otimes y^*
:x\in S_E,\, y^*\in S_{F^*}\big\}}^{w^*}.
$$

Now, by Milman's theorem \cite[Theorem 3.41]{FHHMPZ} we derive the desired inclusion:
$$
Ext B_{J^*}\subset \overline{\big\{ e_x\otimes y^*
:x\in S_E,\, y^*\in S_{F^*}\big\}}^{w^*}.
$$
(b) Suppose that $J=\mathcal{P}_w(^nE,F)$. Let us see that $\overline{\{ e_x\otimes y^*:x\in
S_E,\, y^*\in S_{F^*}\}}^{w^*}\subset\{ e_z\otimes y^*:z\in B_{E^{**}},\, y^*\in B_{F^*}
\big\}$. If $\Phi\in\overline{\{ e_x\otimes y^*:x\in
S_E,\, y^*\in S_{F^*}\}}^{w^*}$, then there exist nets $\{x_\alpha\}_\alpha$ in $S_E$  and $\{y^*_\alpha\}_\alpha$ in $S_{F^*}$ such that $e_{x_\alpha}\otimes y_\alpha^*\overset{w^*}{\rightarrow}\Phi$.
Without loss of generality, we may assume that $\{x_\alpha\}_\alpha$ is
$\sigma(E^{**}, E^*)$-convergent to an element $z$ in $B_{E^{**}}$ and $\{y^*_\alpha\}_\alpha$ is $\sigma(F^{*}, F)$-convergent to an element $y^*$ in $B_{F^{*}}$.

Note that for any $P\in \mathcal{P}_w(^nE,F)$, its Aron-Berner extension $\overline P$ belongs to $\mathcal{P}(^nE^{**},F)$ (see for instance \cite[Proposition 2.5]{CaLa}) and the compacity of $P$ implies that $\overline P$ is $w^*$-continuous on bounded sets. Then, we have that $y_\alpha^*\left(\overline{P}(x_\alpha)\right)\to y^*\left(\overline{P}(z)\right)$, for every $P\in \mathcal{P}_w(^nE,F)$. Thus, $e_{ x_\alpha}\otimes y_\alpha^*\overset{w^*}{\rightarrow}e_z\otimes y^*$ and therefore, $\Phi=e_z\otimes y^*$. When $\Phi$ is a norm one element we have that both $z$ and $y$ are elements in the respective unit spheres $S_{E^{**}}$ and $S_{F^*}$. Now, the result follows.
\end{proof}


In \cite{Dim}, the notion of the essential norm was extended from operators to scalar-valued polynomials and was used to determine that $\mathcal P_w(^n E)$ may be a nontrivial $M$-ideal of $\mathcal P(^n E)$ for at most only one value of $n$. For vector-valued polynomials, also through the essential norm, we obtain a finite range of possible values of $n$ for which $\mathcal P_w(^n E, F)$  has the chance to be a nontrivial $M$-ideal of $\mathcal P(^n E, F)$. Recall that the essential norm of a linear operator $T$ is the distance from $T$ to the subspace of compact operators. When $\mathcal K(E,F)$ is an $M$-ideal in $\mathcal L(E,F)$, there is an explicit alternative formula to compute this essential norm \cite[Proposition VI.4.7]{HWW}. Now  we proceed to discuss de degrees of homogeneity for which our problem might have a nontrivial solution.

\begin{definition}
Let $P$ be an $n$-homogeneous polynomial $P\in\mathcal{P}(^nE,F)$. The {\bf
essential norm} of $P$ is defined by
$$
\|P\|_{es}=d(P,\mathcal{P}_w(^nE,F))=\inf\{\|P-Q\|: Q\in
\mathcal{P}_w(^nE,F)\}.
$$
\end{definition}

In order to obtain a good description of the essential norm, we will make use of the transpose of a polynomial. Note that if $P\in \mathcal P(^nE,F)$ and we denote by $L_P:\bigotimes_{\pi_s}^{n,s} E\to F$ the linearization of $P$, where $\pi_s$ is the projective symmetric tensor norm; then $P^*$ is the usual adjoint of $L_P$.

\begin{lemma}\label{w^*}
If $P\in\mathcal{P}_w(^nE,F)$ then $P^*$ belongs to $\mathcal{L}(F^*,\mathcal{P}_w(^nE))$ and it is $w^*$-continuous on bounded sets.
\end{lemma}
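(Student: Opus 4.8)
The statement has two parts: that $P \in \mathcal{P}_w(^nE,F)$ implies $P^*$ lands in $\mathcal{L}(F^*, \mathcal{P}_w(^nE))$, and that $P^*$ is $w^*$-$w^*$ continuous on bounded sets. For the first part, fix $y^* \in F^*$; I need to show $P^*(y^*) = y^* \circ P$ is a scalar-valued weakly continuous on bounded sets $n$-homogeneous polynomial. This should be essentially immediate from the definition: if $\{x_\alpha\}_\alpha$ is a bounded net in $E$ with $x_\alpha \to x$ weakly, then $P(x_\alpha) \to P(x)$ in the norm of $F$ (since $P \in \mathcal{P}_w(^nE,F)$), hence $y^*(P(x_\alpha)) \to y^*(P(x))$. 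So $P^*(y^*) \in \mathcal{P}_w(^nE)$, and $P^*$ is plainly linear and bounded with $\|P^*\| \le \|P\|$.

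For the second part, I would take a bounded net $\{y^*_\alpha\}_\alpha$ in $F^*$ with $y^*_\alpha \overset{w^*}{\to} y^*$, i.e. $y^*_\alpha(y) \to y^*(y)$ for every $y \in F$, and I must show $P^*(y^*_\alpha) \to P^*(y^*)$ in the topology $\sigma(\mathcal{P}_w(^nE)^*, \mathcal{P}_w(^nE))^*$-predual sense — concretely, that $\langle \varphi, P^*(y^*_\alpha)\rangle \to \langle \varphi, P^*(y^*)\rangle$ for every $\varphi \in \mathcal{P}_w(^nE)^*$. By Proposition~\ref{extremales}(b) (or rather the description of $B_{\mathcal{P}_w(^nE)^*}$ obtained in its proof, taking $F = \mathbb{K}$), the unit ball of $\mathcal{P}_w(^nE)^*$ is the $w^*$-closed convex hull of evaluations $\{e_z : z \in B_{E^{**}}\}$; so by a standard density-plus-boundedness argument it suffices to check convergence against each $e_z$. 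That is, I want $\overline{P^*(y^*_\alpha)}(z) \to \overline{P^*(y^*)}(z)$ for every $z \in E^{**}$. Here the key identity is that the Aron–Berner extension commutes with $y^*$: for $P \in \mathcal{P}_w(^nE, F)$ one has $\overline{y^* \circ P} = y^* \circ \overline{P}$ on $E^{**}$ (this is a routine consequence of the weak-star-density construction of the Aron–Berner extension, valid because $\overline{P}$ takes values in $F$ — not merely $F^{**}$ — thanks to weak continuity of $P$, as noted in the proof of Proposition~\ref{extremales}). Granting this, $\overline{P^*(y^*_\alpha)}(z) = y^*_\alpha(\overline{P}(z))$, and since $\overline{P}(z) \in F$ and $y^*_\alpha \overset{w^*}{\to} y^*$, this converges to $y^*(\overline{P}(z)) = \overline{P^*(y^*)}(z)$, as desired.

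The main point to be careful about is the reduction to testing against extreme points / evaluations: one needs that the net $\{P^*(y^*_\alpha)\}$ is bounded in $\mathcal{P}_w(^nE)$ (true, since $\|P^*(y^*_\alpha)\| \le \|P\|\,\sup_\alpha\|y^*_\alpha\|$) so that pointwise convergence of the functionals $e_z \mapsto \langle e_z, P^*(y^*_\alpha)\rangle$ on the total, norm-bounded set $\{e_z : z \in B_{E^{**}}\}$ upgrades to convergence against all of $\mathcal{P}_w(^nE)^*$; this is the familiar fact that on bounded sets the weak-star topology is determined by a norming set. The only other thing worth spelling out is the commutation $\overline{y^*\circ P} = y^*\circ\overline{P}$, which I would either cite or verify directly from the iterated-limit formula for the Aron–Berner extension. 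I expect no serious obstacle — the whole argument is a packaging of the definition of $\mathcal{P}_w$, the $w^*$-continuity on bounded sets of Aron–Berner extensions of weakly continuous polynomials, and Proposition~\ref{extremales}(b).
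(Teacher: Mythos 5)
Your first assertion, that $P^*(y^*)=y^*\circ P$ belongs to $\mathcal{P}_w(^nE)$ for each $y^*$ and that $P^*$ is bounded and linear, is correct and immediate. The gap is in the second half, and it starts with the interpretation of the statement. In this paper ``$w^*$-continuous on bounded sets'' must mean $w^*$-to-\emph{norm} continuity: the lemma is invoked in Proposition~\ref{norma esencial} precisely to conclude that $\|Q^*(y^*_\alpha)\|\to 0$ for a normalized $w^*$-null net $\{y^*_\alpha\}$, and again in Lemma~\ref{compacto}(b) to get $w^*(P)=0$, where $w^*(P)$ is defined through $\limsup\|P^*(y^*_\alpha)\|$. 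What you prove is only that $\langle\varphi,P^*(y^*_\alpha)\rangle\to\langle\varphi,P^*(y^*)\rangle$ for $\varphi$ in (a subset of) $\mathcal{P}_w(^nE)^*$, i.e.\ a weak-type convergence of $P^*(y^*_\alpha)$; this is strictly weaker than norm convergence and does not support the later applications. Moreover, even your reduction step is not justified: Proposition~\ref{extremales} gives that $B_{\mathcal{P}_w(^nE)^*}$ is the \emph{$w^*$-closed} convex hull of the evaluations $e_z$, not their norm-closed span, and the ``familiar fact'' you cite concerns a total subset of a \emph{predual} determining the $w^*$-topology on bounded sets. Here $\{e_z\}$ lives in the dual of $\mathcal{P}_w(^nE)$, and for \emph{nets} (as opposed to sequences, where Rainwater-type theorems apply) boundedness plus pointwise convergence on such a boundary set does not in general yield convergence against every element of the dual.

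The repair is the paper's much shorter route via compactness. Since $P\in\mathcal{P}_w(^nE,F)$, $P$ is compact, so $K=\overline{P(B_E)}$ is norm compact in $F$. A bounded $w^*$-convergent net in $F^*$ converges uniformly on norm compact sets, hence
$$
\|P^*(y^*_\alpha)-P^*(y^*)\|=\sup_{x\in B_E}\bigl|(y^*_\alpha-y^*)(P(x))\bigr|\le\sup_{y\in K}\bigl|(y^*_\alpha-y^*)(y)\bigr|\longrightarrow 0.
$$
The paper phrases this by citing Aron--Schottenloher for the compactness of $P^*$ and identifying $P^*$ with the adjoint of the linearization $L_P$, so that Schauder's theorem and the $w^*$-to-norm continuity of adjoints of compact operators apply; the displayed computation is the same fact unwound. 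Your identity $\overline{y^*\circ P}=y^*\circ\overline P$ is correct but is not needed once the argument is run this way.
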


\begin{proof}
If $P\in\mathcal{P}_w(^nE,F)$ then $P$ is compact and $P^*\in \mathcal{L}(F^*,\mathcal{P}_w(^nE))$. By \cite[Proposition 3.2]{ASch}, $P^*$ is a compact operator. Since $P^*=L_P^*$ it follows that $L_P$ is compact and its adjoint $P^*$ is $w^*$-continuous.
\end{proof}


Now we can obtain an alternative formula for the essential norm in the case that there is an $M$-structure.

\begin{proposition}\label{norma esencial}
Suppose $\mathcal{P}_w(^nE,F)$ is an $M$-ideal in $\mathcal{P}(^nE,F)$. Then,
for any $P\in\mathcal{P}(^nE,F)$,

$$\|P\|_{es}=\max\{w(P), w^*(P)\},$$
where
\begin{eqnarray*}
w(P)&=&\sup\big\{\limsup\|P(x_\alpha)\|: \|x_\alpha\|=1,\,
x_\alpha\overset{w}{\rightarrow} 0\big\}\quad and \\
w^*(P)&=&\sup\big\{\limsup\|P^*(y_\alpha^*)\|: \|y_\alpha^*\|=1,\,
y^*_\alpha\overset{w^*}{\rightarrow} 0\big\}.
\end{eqnarray*}
\end{proposition}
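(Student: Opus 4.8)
The plan is to prove the two inequalities $\|P\|_{es}\le \max\{w(P),w^*(P)\}$ and $\|P\|_{es}\ge\max\{w(P),w^*(P)\}$ separately, drawing on the analogous operator result \cite[Proposition VI.4.7]{HWW} and the scalar-valued polynomial version in \cite{Dim}. For the easy direction, $\|P\|_{es}\ge w(P)$ and $\|P\|_{es}\ge w^*(P)$, I would argue that both $w(\cdot)$ and $w^*(\cdot)$ vanish on $\mathcal{P}_w(^nE,F)$: if $Q\in\mathcal{P}_w(^nE,F)$ then $Q$ maps bounded weakly null nets to null nets so $w(Q)=0$, and by Lemma~\ref{w^*} the transpose $Q^*$ is $w^*$-continuous on bounded sets, hence $w^*(Q)=0$. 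Then for any $Q\in\mathcal{P}_w(^nE,F)$ one has $w(P)=w(P-Q)\le\|P-Q\|$ (since $w(P-Q)\le\sup_{x\in B_E}\|(P-Q)(x)\|$) and similarly $w^*(P)=w^*(P-Q)\le\|P^*-Q^*\|=\|P-Q\|$; taking the infimum over $Q$ gives $\|P\|_{es}\ge\max\{w(P),w^*(P)\}$. This half uses no $M$-ideal hypothesis.

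For the reverse inequality I would use the $M$-ideal structure through Theorem B together with Proposition~\ref{extremales}(b). Set $m=\max\{w(P),w^*(P)\}$ and $X=\mathcal{P}(^nE,F)$, $J=\mathcal{P}_w(^nE,F)$. The distance $\|P\|_{es}=d(P,J)$ equals, by duality, $\sup\{|\Phi(P)|:\Phi\in B_{J^\perp}\}$, and since $J^\perp$ is the range of the $L$-projection, $B_{J^\perp}$ is the weak-star closed convex hull of $Ext(B_{J^\perp})$; by Theorem B, $Ext(B_{J^\perp})\subset Ext(B_{X^*})\setminus Ext(B_{J^*})$. So it suffices to bound $|\Phi(P)|$ for $\Phi\in Ext(B_{X^*})$ that do not restrict to norm-one functionals on $J$ — i.e. $\Phi$ for which $\|\Phi|_J\|<1$, equivalently (after normalizing) the ``purely essential'' part. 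Using Proposition~\ref{extremales}(a) applied to $X$ itself, such extreme points are weak-star limits of nets $e_{x_\alpha}\otimes y^*_\alpha$ with $x_\alpha\in S_E$, $y^*_\alpha\in S_{F^*}$; the condition that $\Phi$ pairs trivially with $J=\mathcal{P}_w(^nE,F)$ forces, after passing to subnets, that either $x_\alpha$ has no weak cluster point in $E$ giving rise to a genuine weakly null behaviour governed by $w(P)$, or $y^*_\alpha$ is weak-star null giving behaviour governed by $w^*(P)$. In either case $|\Phi(P)|=\lim|y^*_\alpha(P(x_\alpha))|\le m$, and taking the supremum over all such $\Phi$ yields $\|P\|_{es}\le m$.

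The main obstacle is the dichotomy argument in the second paragraph: controlling an arbitrary extreme point $\Phi$ of $B_{X^*}$ that annihilates $J$. One must show that the "defect from weak continuity" of the representing net $e_{x_\alpha}\otimes y^*_\alpha$ splits cleanly into a part measured by $w(P)$ and a part measured by $w^*(P)$. Concretely, I would pass to a subnet with $x_\alpha\overset{w^*}{\to}z\in B_{E^{**}}$ and $y^*_\alpha\overset{w^*}{\to}y^*\in B_{F^*}$; if $z\in S_{E^{**}}$ and $y^*\in S_{F^*}$ then $e_{x_\alpha}\otimes y^*_\alpha\to e_z\otimes y^*$ which lies in $\overline{\{e_z\otimes y^*\}}^{w^*}$ and by the argument in Proposition~\ref{extremales}(b) actually tests $\mathcal{P}_w(^nE,F)$ nontrivially — contradicting $\Phi\in J^\perp$ unless the convergence is not uniform, which is exactly where the $w(P)$-type term $\limsup\|P(x_\alpha)\|$ or, dualizing via $P^*$, the $w^*(P)$-type term $\limsup\|P^*(y^*_\alpha)\|$ enters; if instead $\|z\|<1$ or $\|y^*\|<1$, one renormalizes and absorbs the loss into these same suprema. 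Making this renormalization and the passage between the $E$-side and the $F^*$-side (via $y^*_\alpha(P(x_\alpha))=(P^*(y^*_\alpha))(x_\alpha)$) fully rigorous, and checking the $\limsup$'s are achieved along admissible nets (norm-one, weakly/weak-star null), is the delicate part; the rest follows the template of \cite[Proposition VI.4.7]{HWW} and \cite{Dim} with $F^*$ replacing the scalar field.
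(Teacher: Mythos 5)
Your first half is correct and is exactly the paper's argument: $w$ and $w^*$ are subadditive and vanish on $\mathcal{P}_w(^nE,F)$ (the latter via Lemma~\ref{w^*}), so $\max\{w(P),w^*(P)\}\le\|P-Q\|$ for every $Q\in\mathcal{P}_w(^nE,F)$, and no $M$-ideal hypothesis is needed. The framework for the converse is also the right one (quotient duality, Theorem B, Proposition~\ref{extremales}(a), representing an extreme point $\Phi$ of $B_{\mathcal{P}_w(^nE,F)^\perp}$ as a weak-star limit of $e_{x_\alpha}\otimes y_\alpha^*$ and passing to subnets with $x_\alpha\overset{w^*}{\to}z$, $y_\alpha^*\overset{w^*}{\to}y^*$); the paper even simplifies your setup slightly by noting that $\|P\|_{es}$ is \emph{attained} at a single extreme point $\Phi$ of the weak-star compact ball $B_{\mathcal{P}_w(^nE,F)^\perp}$, so only that one functional needs to be estimated.

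However, the decisive step is missing. You correctly identify the dichotomy as "the main obstacle" but never prove it, and the resolution you sketch is not the right one: there is no renormalization or "loss to absorb" when $\|z\|<1$ or $\|y^*\|<1$, and the case $z\in S_{E^{**}}$, $y^*\in S_{F^*}$ is not ruled out by any argument you give. The point you are missing is a one-line computation: since $\Phi$ annihilates $\mathcal{P}_w(^nE,F)$, it annihilates every finite-type polynomial $(x^*)^n\cdot y$, so
$$0=\Phi\big((x^*)^n\cdot y\big)=\lim_\alpha x^*(x_\alpha)^n\,y_\alpha^*(y)=z(x^*)^n\,y^*(y)\qquad\text{for all }x^*\in E^*,\ y\in F,$$
which forces $z=0$ or $y^*=0$ \emph{exactly} (not merely of norm less than one). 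In the first case $\{x_\alpha\}_\alpha$ is a normalized weakly null net and $\Phi(P)=\lim_\alpha y_\alpha^*(P(x_\alpha))\le\limsup\|P(x_\alpha)\|\le w(P)$; in the second, $\{y_\alpha^*\}_\alpha$ is a normalized weak-star null net and, writing $y_\alpha^*(P(x_\alpha))=(P^*(y_\alpha^*))(x_\alpha)$, one gets $\Phi(P)\le w^*(P)$. Without this test against finite-type polynomials your dichotomy remains an assertion, so as written the proof of $\|P\|_{es}\le\max\{w(P),w^*(P)\}$ is incomplete.
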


\begin{proof}
Let $P\in\mathcal{P}(^nE,F)$. For any $Q\in\mathcal{P}_w(^nE,F)$ and
%
for any normalized weak-star null net $\{y_\alpha^*\}_\alpha$, it holds
$$
\|P-Q\|=\|P^*-Q^*\|\geq\big\|(P^*-Q^*)(y^*_\alpha)\big\|\geq \big\|P^*(y^*_\alpha)\big\|-
\big\|Q^*(y^*_\alpha)\big\|.
$$
Since, by Lemma~\ref{w^*}, $\big\|Q^*(y^*_\alpha)\big\|\to 0$ it follows that
$\|P-Q\|\geq\limsup\|P^*(y^*_\alpha)\|$ and thus $\|P\|_{es}\geq w^*(P)$.

The other inequality follows analogously. Thus,
$$\|P\|_{es}\ge\max\{w(P), w^*(P)\}.$$

Now suppose that $\mathcal{P}_w(^nE,F)$ is an $M$-ideal in
$\mathcal{P}(^nE,F)$. Then we have
$$
ExtB_{\mathcal{P}(^nE,F)^*}=ExtB_{\mathcal{P}_w(^nE,F)^\perp}\cup
ExtB_{\mathcal{P}_w(^nE,F)^*}.
$$

The essential norm of $P$, $\|P\|_{es}$, is the norm of the class of
$P$ in the quotient space $\mathcal{P}(^nE,F)/\mathcal{P}_w(^nE,F)$ and
the dual of this quotient can be isometrically identified with
$\mathcal{P}_w(^nE,F)^\perp$. Then, there exists $\Phi\in Ext
B_{\mathcal{P}_w(^nE,F)^\perp}$ such that $\Phi(P)=\|P\|_{es}$. So,
$\Phi\in Ext B_{\mathcal{P}(^nE,F)^*}$ and, by Proposition \ref{extremales} (a),
$\Phi\in \overline{\big\{ e_x\otimes y^*:x\in S_E, y^*\in S_{F^*}\big\}}^{w^*}$.

Chose nets $\{x_\alpha\}_\alpha$ in $S_E$ and $\{y_\alpha^*\}_\alpha$ in $S_{F^*}$ such that
$e_{x_\alpha}\otimes y^*_\alpha\overset{w^*}{\rightarrow}\Phi$, where $w^*$ means the
topology $\sigma(\mathcal{P}(^nE,F)^*,\mathcal{P}(^nE,F))$. In passing
to appropriate subnets, we can suppose that $\{x_\alpha\}_\alpha$ is
$\sigma(E^{**}, E^*)$-convergent to an element $z$ in $B_{E^{**}}$ and $\{y_\alpha^*\}_\alpha$ is $\sigma(F^{*}, F)$-convergent to an element $y$ in $B_{F^{*}}$.

For any $x^*\in E^*$ and $y\in F$, the polynomial
$(x^*)^n\cdot y$ belongs to $\mathcal{P}_w(^nE,F)$. This gives
$$ 0=\Phi((x^*)^n\cdot y)=\lim_\alpha x^*(x_\alpha)^ny_\alpha^*(y)=
z(x^*)^n y^*(y).$$

So it should be $z=0$ or $y^*=0$. In the first case, $\{x_\alpha\}_\alpha$ is weakly null and
$$
\|P\|_{es}=\Phi(P)=\lim_\alpha y_\alpha^*\left(P(x_\alpha)\right)\le \limsup\left\|P(x_\alpha)\right\|\leq w(P).
$$

In the second case, $\{y_\alpha^*\}_\alpha$ is weak-star null and it follows similarly that $\|P\|_{es}\leq w^*(P)$.
\end{proof}

As in the scalar-valued polynomial case this result enable us to narrow the possible values of $n$ for which  $\mathcal{P}_w(^nE,F)$ could be an $M$-ideal in $\mathcal{P}(^nE,F)$. To see this, first note that the arguments from \cite{BR-98} and \cite{AD} used in the comments before Remark 1.8 of \cite{Dim} also work for vector-valued polynomials. Thus, we obtain:

\begin{remark}\label{n-unico}\rm
For Banach spaces $E$ and $F$, either $\mathcal{P}_w(^kE,F)=\mathcal{P}_{w0}(^kE,F)=\mathcal{P}(^kE,F)$,
for all $k$, or there exists $n\in\mathbb{N}$ such that:
\begin{itemize}
\item $\mathcal{P}_w(^kE,F)=\mathcal{P}_{w0}(^kE,F)=\mathcal{P}(^kE,F)$,
for all $k< n$.
\item $\mathcal{P}_w(^nE,F)=\mathcal{P}_{w0}(^nE,F)\subsetneqq
\mathcal{P}(^nE,F)$.
\item $\mathcal{P}_w(^kE,F)\subsetneqq \mathcal{P}_{w0}(^kE,F)\subset \mathcal{P}(^kE,F)$,
for all $k> n$.
\end{itemize}

When this value of $n$ does exist, we call it \textbf{the critical degree of $(E,F)$} and denote $n=cd(E,F)$. For the case $F=\mathbb{K}$ we write $cd(E)$ instead of $cd(E,\mathbb{K})$.
\end{remark}

Therefore, if there exists a polynomial from $E$ to $F$ which is not weakly
continuous on bounded sets, the critical degree is the minimum of
all $k$ such that $\mathcal{P}_{w}(^kE,F)\not=\mathcal{P}(^kE,F)$.

Observe that if  a scalar-valued polynomial $P\in\mathcal{P}(^nE)$ is not weakly continuous on bounded sets then, for any $y\in F$, $y\not=0$, the polynomial $x\mapsto P(x)y$ belongs to  $\mathcal{P}(^nE,F)$ and it is not weakly continuous on bounded sets. This says that, for any Banach space $F$,
$$
cd(E,F)\leq cd(E).
$$

Note also that $cd(E,F)$ could be much smaller than $cd(E)$. For instance, $cd (\ell_p,c_0)=1$ while $cd(\ell_p)$ is the integer number satisfying $p\leq cd(\ell_p)< p+1$.

\begin{lemma}\label{compacto} Let  $P\in\mathcal{P}(^nE,F)$ be a compact polynomial.
\begin{enumerate}
\item[(a)] If $n<cd(E)$ then $P$ is weakly continuous on bounded sets.

\item[(b)] $w^*(P)=0$.
\end{enumerate}
\end{lemma}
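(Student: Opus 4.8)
The plan is to prove both statements by reducing to properties of the Aron--Berner extension and the linearization $L_P$ on the symmetric projective tensor product. For part (a), let $P\in\mathcal{P}(^nE,F)$ be compact with $n<cd(E)$. The goal is to show $P$ is weakly continuous on bounded sets, i.e.\ $P\in\mathcal{P}_w(^nE,F)$. First I would take a bounded net $\{x_\alpha\}_\alpha$ in $E$ with $x_\alpha\overset{w}{\to}x$, and show $P(x_\alpha)\to P(x)$ in $F$. Since $n<cd(E)$, every scalar-valued polynomial in $\mathcal{P}(^nE)$ is weakly continuous on bounded sets; hence for each $y^*\in F^*$, the polynomial $y^*\circ P=P^*(y^*)\in\mathcal{P}(^nE)$ is weakly continuous on bounded sets, so $y^*(P(x_\alpha))\to y^*(P(x))$. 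Thus $P(x_\alpha)\overset{w}{\to}P(x)$. Now compactness of $P$ enters: the set $\{P(x_\alpha)\}_\alpha\cup\{P(x)\}$ lies in a relatively compact subset of $F$; a net in a compact set that converges weakly must converge in norm (every subnet has a norm-convergent further subnet, and the limit must be the weak limit $P(x)$). Therefore $P(x_\alpha)\to P(x)$ in norm, establishing $P\in\mathcal{P}_w(^nE,F)$.

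For part (b), I would show $w^*(P)=0$ whenever $P$ is compact. Recall $w^*(P)=\sup\{\limsup\|P^*(y_\alpha^*)\|:\|y_\alpha^*\|=1,\ y_\alpha^*\overset{w^*}{\to}0\}$. The key observation is that since $P$ is compact, its linearization $L_P\colon\bigotimes^{n,s}_{\pi_s}E\to F$ is a compact operator (this is the identification already used in Lemma~\ref{w^*}: $P$ compact $\Leftrightarrow$ $L_P$ compact, via \cite[Proposition 3.2]{ASch} or directly since $B_E$ maps onto a relatively compact set and the tensor elements $x^n$ norm the relevant cone). A compact operator has a compact adjoint, and the adjoint of a compact operator is $w^*$-to-norm continuous on bounded sets (this is standard: $L_P^*$ maps the weak-star topology on bounded sets of $F^*$ to the norm topology). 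Since $P^*=L_P^*$, it follows that for any normalized net $y_\alpha^*\overset{w^*}{\to}0$ we get $\|P^*(y_\alpha^*)\|=\|L_P^*(y_\alpha^*)\|\to0$, hence $\limsup\|P^*(y_\alpha^*)\|=0$ for every such net, and taking the supremum gives $w^*(P)=0$.

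I do not anticipate a serious obstacle here; both parts are essentially bookkeeping around two well-known facts (a weakly convergent net inside a norm-compact set converges in norm; the adjoint of a compact operator is $w^*$-to-norm continuous on bounded sets). The one point that requires a little care is the passage in part (a) from ``$y^*\circ P$ weakly continuous on bounded sets for each $y^*$'' plus ``$P$ compact'' to ``$P$ weakly continuous on bounded sets''; the weak continuity of the scalar polynomials only gives weak convergence $P(x_\alpha)\overset{w}{\to}P(x)$, and it is precisely compactness that upgrades this to norm convergence. For part (b) the mild subtlety is simply recording that compactness of $P$ is equivalent to compactness of $L_P$, which the paper has already invoked in the proof of Lemma~\ref{w^*}, so I would just cite that.
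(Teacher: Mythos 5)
Your proof is correct and follows essentially the same route as the paper: for (a), the paper likewise observes that $n<cd(E)$ makes $P$ weak-to-weak continuous on bounded sets and then uses compactness (via a convergent subnet argument) to upgrade weak convergence of $\{P(x_\alpha)\}_\alpha$ to norm convergence; for (b), the paper simply points back to the proof of Lemma~\ref{w^*}, which is exactly the $L_P$-compactness and $w^*$-to-norm continuity of $L_P^*=P^*$ argument you give. No gaps.
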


\begin{proof}\textcolor{red}{}
(a) If $n<cd(E)$, then every scalar-valued $n$-homogeneous polynomial on $E$ is weakly continuous on bounded sets. Then, $P$ is weak-to-weak continuous on bounded sets. So, for any bounded net $\{x_\alpha\}_\alpha$ in $E$ such that $x_\alpha\overset{w}{\rightarrow} x$, we have $P(x_\alpha)\overset{w}{\rightarrow} P(x)$. Being $P$ compact, the bounded net $\{P(x_\alpha)\}_\alpha$ should have a convergent subnet. By a canonical argument we derive that $P(x_\alpha){\rightarrow} P(x)$ and thus $P$ is weakly continuous on bounded sets.

(b) This is a consequence of the proof of Lemma \ref{w^*}.
\end{proof}

\begin{proposition}\label{compacto y w0}
Every polynomial in $\mathcal{P}(^nE,F)$ which is weakly continuous on bounded sets at 0 and compact is weakly continuous on bounded sets if and only if $n\leq cd(E)$.
\end{proposition}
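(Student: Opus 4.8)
The plan is to establish the two implications separately, using in both directions the standard device of composing a vector-valued polynomial with functionals $y^*\in F^*$ to reduce to the scalar case, where Remark~\ref{n-unico} applied to the pair $(E,\mathbb K)$ tells us that $\mathcal P_{w0}(^nE)=\mathcal P_w(^nE)$ when $n\le cd(E)$, while $\mathcal P_w(^nE)\subsetneq\mathcal P_{w0}(^nE)$ when $n>cd(E)$.

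First I would prove that $n\le cd(E)$ suffices. Let $P\in\mathcal P(^nE,F)$ be compact and belong to $\mathcal P_{w0}(^nE,F)$. Step one: for every $y^*\in F^*$ the scalar polynomial $y^*\circ P$ lies in $\mathcal P_{w0}(^nE)$, since any bounded weakly null net $\{z_\beta\}_\beta$ satisfies $P(z_\beta)\to 0$ in norm, whence $y^*(P(z_\beta))\to 0$. As $n\le cd(E)$, Remark~\ref{n-unico} gives $y^*\circ P\in\mathcal P_w(^nE)$. Step two: for an arbitrary bounded net $x_\alpha\overset{w}{\rightarrow}x$, applying step one with every $y^*\in F^*$ yields $y^*(P(x_\alpha))\to y^*(P(x))$, i.e. $P(x_\alpha)\overset{w}{\rightarrow}P(x)$. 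Step three: since $\{x_\alpha\}_\alpha$ is bounded and $P$ is compact, $\{P(x_\alpha)\}_\alpha$ sits inside a fixed norm-compact subset of $F$, so every subnet has a sub-subnet converging in norm, necessarily to the weak limit $P(x)$; the usual argument then upgrades this to $P(x_\alpha)\to P(x)$ in norm, so $P\in\mathcal P_w(^nE,F)$. (For $n<cd(E)$ this also follows at once from Lemma~\ref{compacto}(a).)

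For the converse I would argue by contraposition. Assume $n>cd(E)$; by Remark~\ref{n-unico} there is a scalar polynomial $Q\in\mathcal P_{w0}(^nE)\setminus\mathcal P_w(^nE)$, and $Q$ is automatically compact because $Q(B_E)$ is a bounded subset of $\mathbb K$. Pick $y\in F$, $y\ne 0$, and set $P(x)=Q(x)\,y$. Then $P\in\mathcal P(^nE,F)$ is compact, as $P(B_E)$ is a bounded subset of the one-dimensional subspace $\mathbb K\,y$, and $P\in\mathcal P_{w0}(^nE,F)$ because $Q(z_\beta)\to 0$ for every bounded weakly null net. However $P\notin\mathcal P_w(^nE,F)$: choosing a bounded net $x_\alpha\overset{w}{\rightarrow}x$ with $Q(x_\alpha)\not\to Q(x)$, available since $Q\notin\mathcal P_w(^nE)$, and using $y\ne 0$, we get $P(x_\alpha)=Q(x_\alpha)y\not\to Q(x)y=P(x)$. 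This produces a compact member of $\mathcal P_{w0}(^nE,F)$ outside $\mathcal P_w(^nE,F)$, which is what was needed.

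The only step that is not entirely mechanical is step three above: the passage from weak to norm convergence of $\{P(x_\alpha)\}_\alpha$ genuinely requires compactness of $P$ combined with the boundedness of the net (so that the whole net lies in one compact set and the subnet extraction applies); the remaining steps are routine transfers through functionals and direct appeals to Remark~\ref{n-unico}. Implicitly one assumes $F\ne\{0\}$, and if $E$ admits no critical degree one reads $n\le cd(E)$ as automatically satisfied, in which case the first alternative of Remark~\ref{n-unico} makes the forward implication trivial.
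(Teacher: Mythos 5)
Your proof is correct, and while the backward implication (taking $Q\in\mathcal{P}_{w0}(^nE)\setminus\mathcal{P}_w(^nE)$ and setting $P=Q\cdot y$) coincides with the paper's argument, your forward implication follows a genuinely different route. The paper cites \cite[Proposition 3.4]{ASch} to see that every derivative $d^kP(x)$, $0<k<n$, of a compact polynomial is compact, applies Lemma~\ref{compacto}~(a) to each of them (possible since $k<n\le cd(E)$) to conclude they are weakly continuous on bounded sets, and then combines this with the hypothesis $P\in\mathcal{P}_{w0}(^nE,F)$ via the standard characterization of $\mathcal{P}_w$ in terms of weak continuity at $0$ plus weak continuity of the lower-order derivatives. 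You instead push the hypothesis through functionals: $y^*\circ P\in\mathcal{P}_{w0}(^nE)=\mathcal{P}_w(^nE)$ for every $y^*\in F^*$ (the equality holding for all $n\le cd(E)$ by Remark~\ref{n-unico}), which gives weak-to-weak continuity of $P$ on bounded sets, and then compactness upgrades the weak convergence of $\{P(x_\alpha)\}_\alpha$ to norm convergence by the subnet extraction. In effect you are rerunning the proof of Lemma~\ref{compacto}~(a) at the borderline degree $n=cd(E)$, where the blanket statement ``every scalar $n$-homogeneous polynomial is weakly continuous'' fails but the hypothesis $P\in\mathcal{P}_{w0}$ supplies exactly the scalar information needed. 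Your version avoids the derivative machinery and the appeal to Aron--Schottenloher, making it somewhat more self-contained; the paper's version avoids the explicit identification $\mathcal{P}_{w0}(^nE)=\mathcal{P}_w(^nE)$ at $n=cd(E)$, though that identity is already recorded in Remark~\ref{n-unico}, so nothing extra is being assumed. Your closing remarks on the degenerate cases ($F=\{0\}$, or $E$ having no critical degree) are sensible and consistent with the paper's conventions.
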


\begin{proof}
If $n> cd(E)$, there exists a polynomial $p\in \mathcal{P}_{w0}(^nE)\setminus \mathcal{P}_{w}(^nE)$. Then, for a fixed $y\in F$ the polynomial $P(x)=p(x) y$ is weakly continuous on bounded sets at 0 and compact but it is not weakly continuous on bounded sets.

Reciprocally, let $n\leq cd(E)$ and let $P\in\mathcal{P}(^nE,F)$ be a polynomial weakly continuous on bounded sets at 0 and compact. We know from \cite[Proposition 3.4]{ASch} that, for $0< k<n$, any derivative $d^kP(x)$ is compact. Thus, by Lemma \ref{compacto} (a), we obtain that $d^kP(x)$ is weakly continuous on bounded sets, for all  $0< k<n$. This fact together with the hypothesis of $P$ being weakly continuous on bounded sets at 0 implies that $P$ is weakly continuous on bounded sets.
\end{proof}

By Lemma \ref{compacto} (b), if $P\in\mathcal{P}(^nE,F)$ is weakly continuous on bounded sets at 0 and compact then $w(P)=w^*(P)=0$. If,  in addition, $\mathcal{P}_w(^nE,F)$ is an $M$-ideal in $\mathcal{P}(^nE,F)$, Proposition \ref{norma esencial} states that $\|P\|_{es}=0$ and so $P$ is weakly continuous on bounded sets. Thus, by
Proposition \ref{compacto y w0}, it should be $n\leq cd(E)$. Therefore, we have:

\begin{corollary} \label{grado de E}
If $\mathcal{P}_w(^nE,F)$ is an $M$-ideal in $\mathcal{P}(^nE,F)$, then $n\leq cd(E)$.
\end{corollary}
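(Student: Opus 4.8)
The plan is to argue by contradiction: suppose $\mathcal{P}_w(^nE,F)$ is an $M$-ideal in $\mathcal{P}(^nE,F)$ but $n>cd(E)$. The idea is to manufacture a polynomial $P\in\mathcal{P}(^nE,F)$ whose essential norm is forced to be $0$ by the $M$-ideal hypothesis, yet which fails to be weakly continuous on bounded sets; this is an outright contradiction, since $\mathcal{P}_w(^nE,F)$ is norm closed.

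First I would build the witness polynomial. Since $n>cd(E)$, the trichotomy in Remark~\ref{n-unico} applied with $F=\mathbb{K}$ provides a scalar-valued polynomial $p\in\mathcal{P}_{w0}(^nE)\setminus\mathcal{P}_w(^nE)$. Fixing any $y\in F$ with $y\neq 0$ and setting $P(x)=p(x)\,y$, I obtain $P\in\mathcal{P}(^nE,F)$ with range contained in the line $\mathbb{K}y$; hence $P$ is compact, it is weakly continuous on bounded sets at $0$ (because $p$ is), but $P\notin\mathcal{P}_w(^nE,F)$. This is precisely the construction already used in the first half of Proposition~\ref{compacto y w0}.

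Next I would evaluate the two quantities in the essential-norm formula. Weak continuity at $0$ gives $w(P)=0$: for any normalized weakly null net $\{x_\alpha\}_\alpha$ one has $P(x_\alpha)\to 0$, so $\limsup\|P(x_\alpha)\|=0$. Since $P$ is compact, Lemma~\ref{compacto}(b) yields $w^*(P)=0$. As $\mathcal{P}_w(^nE,F)$ is assumed to be an $M$-ideal in $\mathcal{P}(^nE,F)$, Proposition~\ref{norma esencial} applies and gives $\|P\|_{es}=\max\{w(P),w^*(P)\}=0$. But $\|P\|_{es}=0$ means $P$ lies in the norm closure of $\mathcal{P}_w(^nE,F)$, which coincides with $\mathcal{P}_w(^nE,F)$ because that subspace is closed; this contradicts $P\notin\mathcal{P}_w(^nE,F)$. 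Therefore $n\leq cd(E)$.

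The argument is short because all the real work has been done beforehand; the only point that requires care is that the alternative formula for the essential norm in Proposition~\ref{norma esencial} is available exactly under the $M$-ideal hypothesis, which is precisely what is being assumed. If one wanted to avoid invoking Proposition~\ref{compacto y w0} directly, the explicit $P=p\,(\cdot)\,y$ construction makes the argument self-contained given only Remark~\ref{n-unico}, Lemma~\ref{compacto}, and Proposition~\ref{norma esencial}.
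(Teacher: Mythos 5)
Your proof is correct and follows essentially the same route as the paper: both arguments combine Lemma~\ref{compacto}(b) and Proposition~\ref{norma esencial} to force $\|P\|_{es}=0$ for a compact polynomial that is weakly continuous on bounded sets at $0$, and both rely on the witness $P=p(\cdot)\,y$ with $p\in\mathcal{P}_{w0}(^nE)\setminus\mathcal{P}_w(^nE)$ (which is exactly the content of the relevant half of Proposition~\ref{compacto y w0}). The only difference is that you phrase it as a contradiction and unpack that construction explicitly rather than citing the proposition.
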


Clearly, if $n< cd(E,F)$, $\mathcal{P}_w(^nE,F)$ is a trivial $M$-ideal in $\mathcal{P}(^nE,F)$. So the problem proposed is worth being studied for polynomials of degree $n$, with $cd(E,F)\leq n \leq cd(E)$.
\medskip

The fact that $\mathcal{P}_w(^nE,F)$ is an $M$-ideal in $\mathcal{P}(^nE,F)$ has some incidence in the set of polynomials whose Aron-Berner extension attains the norm. As we have for scalar-valued polynomials \cite[Proposition 1.10]{Dim}, the following version of \cite[Proposition VI.4.8]{HWW} is a Bishop-Phelps type result for vector-valued polynomials. The proof is omitted since it can be obtained as a slight modification of the proof given in \cite{Dim}.

\begin{proposition}\label{nunca denso}
Let $E$ and $F$ be Banach spaces and suppose that $\mathcal{P}_w(^nE,F)$ is an $M$-ideal in $\mathcal{P}(^nE,F)$.
\begin{enumerate}
\item[(a)] If $P\in \mathcal{P}(^nE,F)$ is such that its Aron-Berner extension $\overline{P}$
does not attain its norm at $B_{E^{**}}$, then $\|P\|=\|P\|_{es}$.
\item[(b)] The set of polynomials in $\mathcal{P}(^nE,F)$ whose
Aron-Berner extension does not attain the norm is nowhere dense in
$\mathcal{P}(^nE,F)$.
\end{enumerate}
\end{proposition}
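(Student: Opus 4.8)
The plan is to follow the scalar-valued argument of \cite[Proposition 1.10]{Dim} (which itself adapts \cite[Proposition VI.4.8]{HWW}), the only genuinely new feature being the bookkeeping of the $F$-coordinate. Write $X=\mathcal{P}(^nE,F)$ and $J=\mathcal{P}_w(^nE,F)$. By hypothesis $J$ is an $M$-ideal in $X$, so Theorem~B gives
$$
Ext\, B_{X^*}=Ext\, B_{J^\perp}\cup Ext\, B_{J^*}.
$$
For part~(a), fix $P$ whose Aron--Berner extension $\overline P$ does not attain its norm on $B_{E^{**}}$; since $\|P\|_{es}\le\|P\|$ always, it is enough to prove $\|P\|\le\|P\|_{es}$. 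First I would produce an extreme point $\Phi$ of $B_{X^*}$ with $\Phi(P)=\|P\|$: the real-linear, $w^*$-continuous functional $\phi\mapsto\operatorname{Re}\phi(P)$ attains its maximum, which equals $\|P\|$, on a nonempty $w^*$-closed face of the $w^*$-compact ball $B_{X^*}$, and such a face contains an extreme point of $B_{X^*}$ by the Krein--Milman theorem; after multiplying by a unimodular scalar we may assume $\Phi(P)=\|P\|$.

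Now I would distinguish the two cases provided by the decomposition above. If $\Phi\in Ext\, B_{J^*}$, then Proposition~\ref{extremales}(b) lets us write $\Phi=e_z\otimes y^*$ with $z\in S_{E^{**}}$ and $y^*\in S_{F^*}$ (here one also uses that a functional in $Ext\, B_{J^*}$ has a unique norm-preserving extension to $X$, $J$ being an $M$-ideal, so that the equality $\Phi=e_z\otimes y^*$ holds on all of $X$ and not merely on $J$). Hence, recalling that the Aron--Berner extension is isometric,
$$
\|P\|=\Phi(P)=(e_z\otimes y^*)(P)\le\|\overline P(z)\|\le\|\overline P\|=\|P\|,
$$
so $\|\overline P(z)\|=\|\overline P\|$, i.e.\ $\overline P$ attains its norm at $z\in B_{E^{**}}$, contradicting the hypothesis. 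Therefore $\Phi\in Ext\, B_{J^\perp}$, so $\Phi$ vanishes on $J$ and, for every $Q\in J$, $\|P\|=\Phi(P)=\Phi(P-Q)\le\|P-Q\|$; taking the infimum over $Q\in J$ gives $\|P\|\le\|P\|_{es}$, which proves~(a).

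For part~(b), I would first eliminate the Aron--Berner extension. The maps $P\mapsto\|P\|$ and $P\mapsto\|P\|_{es}$ are continuous ($\|\cdot\|_{es}$ being the quotient norm of $X/J$, hence $1$-Lipschitz), so $\mathcal{C}:=\{P\in X:\|P\|=\|P\|_{es}\}$ is closed; by part~(a) the set of polynomials whose Aron--Berner extension does not attain the norm is contained in $\mathcal{C}$, so it suffices to show that $\mathcal{C}$ has empty interior. Given $P\in\mathcal{C}$ and $\delta>0$, I would choose $\varepsilon\in(0,\delta)$, then $x_0\in S_E$ with $\|P(x_0)\|>\|P\|-\varepsilon/2$, then $x^*\in S_{E^*}$ with $x^*(x_0)=1$, and set $u=P(x_0)/\|P(x_0)\|$ if $P(x_0)\ne0$ and $u\in S_F$ arbitrary otherwise. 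Put $R:=P+\varepsilon\,(x^*)^n\cdot u$. Then $\|R-P\|=\varepsilon<\delta$; since $(x^*)^n\cdot u$ is of finite type, hence in $J$, one has $\|R\|_{es}=\|P\|_{es}=\|P\|$; and $R(x_0)=P(x_0)+\varepsilon u$ has norm $\|P(x_0)\|+\varepsilon$ when $P(x_0)\ne0$, and norm $\varepsilon$ when $P(x_0)=0$ (which forces $\|P\|<\varepsilon/2$), so in either case $\|R\|\ge\|R(x_0)\|>\|P\|=\|R\|_{es}$. Thus $R\notin\mathcal{C}$, so $\mathcal{C}$ is closed with empty interior, hence nowhere dense, and therefore so is the smaller set in the statement.

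I expect the only points that are not purely formal, both already signalled above, to be: in~(a), checking that the extreme functional $\Phi$ --- a priori just an extreme point of $B_{J^*}$, i.e.\ a functional on $J$ --- really coincides with $e_z\otimes y^*$ on all of $X$ (this is where the $M$-ideal hypothesis re-enters, through uniqueness of norm-preserving extensions), together with invoking the isometric character of the Aron--Berner extension; and in~(b), selecting the perturbing finite type polynomial $\varepsilon\,(x^*)^n\cdot u$ so that the vector $u\in S_F$ is aligned with $P(x_0)$ at a point $x_0$ nearly attaining $\|P\|$, since this is exactly what yields the strict inequality $\|R\|>\|R\|_{es}$ and is the one place where the step from the scalar- to the vector-valued setting requires an idea rather than bookkeeping.
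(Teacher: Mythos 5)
Your proof is correct and takes essentially the approach the paper intends: the proposition's proof is omitted there as a ``slight modification'' of the scalar-valued case \cite[Proposition 1.10]{Dim}, and your argument is exactly that modification, with the two genuinely vector-valued points --- upgrading $\Phi|_{\mathcal{P}_w(^nE,F)}=e_z\otimes y^*$ to an identity on all of $\mathcal{P}(^nE,F)$ via uniqueness of norm-preserving extensions for $M$-ideals, and aligning the finite-type perturbation $\varepsilon\,(x^*)^n\cdot u$ with $P(x_0)$ --- handled correctly.
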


We finish this section relating norm attaining polynomials with farthest points and remotal sets. The study of the existence of farthest points in a set of a Banach space can be traced to the articles of Asplund \cite{As} and Edelstein \cite{Ed}. This concept is related to several geometric properties of the space, like the existence of exposed points and the Mazur intersection property.

Perhaps some definitions are in order. Let $J$ be a subspace of a Banach space $X$. Fix $x\in X$, the {\bf farthest distance} from $x$ to the unit ball of $J$ is given by
$$
\rho (x,B_{J})=\sup\{\|x-y\|:\ y\in B_{J}\}.
$$
A point $x\in X$ has a {\bf farthest point} in $B_{J}$ if there exists $y\in B_{J}$ such that $\|x-y\| = \rho(x,B_{J})$. The set of points in $X$ having farthest points in $B_{J}$ is denoted by $R(B_{J})$. Then we have:
$$
R(B_{J})=\left\{x\in X:\ \exists\, y\in B_{J} \textrm{ such that } \|x-y\| = \rho(x,B_{J})\right\}.
$$
It is said that $B_{J}$ is {\bf densely remotal} in $X$ if $R(B_{J})$ is dense in $X$ and it is {\bf almost remotal} in $X$ if $R(B_{J})$ contains a dense $G_{\delta}$ set.

In \cite{BLR}, Bandyopadhyay, Lin and Rao studied dense remotality of the ball of $\mathcal{K}(E,F)$ in the space $\mathcal{L}(E,F)$. Adapting some of their ideas and applying the previous proposition, in Corollary~\ref{almost remotal}, we obtain a result about almost remotality of $B_{\mathcal{P}_w(^nE,F)}$ in $\mathcal{P}(^nE,F)$.

First, observe that for any $P\in\mathcal{P}(^nE,F)$ we have that
$$
\rho\left(P,B_{\mathcal{P}_w(^nE,F)}\right)=\|P\| + 1.
$$

Indeed, it is clear that $\rho\left(P,B_{\mathcal{P}_w(^nE,F)}\right)\le\|P\| + 1$, for every $P\in\mathcal{P}(^nE,F)$ and the equality is obvious for the polynomial $P\equiv 0$.
For the reverse inequality, given $P\in\mathcal{P}(^nE,F)$, $P\not\equiv 0$, and $\varepsilon >0$, fix $x\in S_E$ and $y^*\in S_{F^*}$ such that $y^*(P(x))>(1-\varepsilon ) \|P\|$. Now, take $y\in S_F$ and $x^*\in S_{E^*}$ satisfying $y^*(y)>1-\varepsilon$ and $x^*(x)=1$ and
consider the polynomial
 $Q= - (x^*)^n\cdot y\in B_{\mathcal{P}_w(^nE,F)}$.

Then, we have
\begin{eqnarray*}
\|P-Q\| & = & \|P+(x^*)^n\cdot y\| \ge \left|y^*(P(x)) + x^*(x)^n  y^*(y) \right|\\
&=& y^*(P(x)) + y^*(y) > (1-\varepsilon) \left(\|P\| + 1\right),
\end{eqnarray*}
for all $\varepsilon >0$, which proves the claim.
\medskip

The relation between norm attaining linear functions and the sets of operators which admit farthest points in the unit ball of the space of compact operators was studied in \cite{BLR}. To simplify our statements
let us introduce the following notations:
\begin{eqnarray*}
NA\left(\mathcal{P}(^nE,F)\right)&=&\{ P\in \mathcal{P}(^nE,F):\ P \textrm{ attains its norm at }B_E\},\\
AB-NA\left(\mathcal{P}(^nE,F)\right)&=&\{ P\in \mathcal{P}(^nE,F):\ \overline{P} \textrm{ attains its norm at }B_{E^{**}}\}.
\end{eqnarray*}

\begin{proposition}\label{NA-remotal}
 $NA\left(\mathcal{P}(^nE,F)\right)\subset R\left(B_{\mathcal{P}_w(^nE,F)}\right)$.
\end{proposition}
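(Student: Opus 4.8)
The plan is to produce, for each norm-attaining $P$, an explicit farthest point in $B_{\mathcal{P}_w(^nE,F)}$; in fact no $M$-ideal hypothesis is needed. Recall that in the discussion immediately preceding the statement it was shown that $\rho\left(P,B_{\mathcal{P}_w(^nE,F)}\right)=\|P\|+1$ for every $P\in\mathcal{P}(^nE,F)$, so it suffices to find $Q\in B_{\mathcal{P}_w(^nE,F)}$ with $\|P-Q\|=\|P\|+1$.

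First I would dispose of the trivial case $P\equiv 0$ (take $Q\equiv 0$). For $P\not\equiv 0$ in $NA\left(\mathcal{P}(^nE,F)\right)$ I would fix $x_0\in S_E$ with $\|P(x_0)\|=\|P\|$. The crucial point is the choice of the target vector: I would take $y:=P(x_0)/\|P\|\in S_F$, precisely so that adding $y$ to $P(x_0)$ produces no cancellation. Then, using Hahn--Banach to pick $x^*\in S_{E^*}$ with $x^*(x_0)=1$, I would set
$$
Q:=-(x^*)^n\cdot y\in\mathcal{P}_f(^nE,F)\subset\mathcal{P}_w(^nE,F),
$$
which is a norm-one polynomial ($\|Q\|=\|x^*\|^n\|y\|=1$), hence lies in $B_{\mathcal{P}_w(^nE,F)}$.

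The verification is then a one-line estimate: evaluating at $x_0$ gives
$$
(P-Q)(x_0)=P(x_0)+\big(x^*(x_0)\big)^n\,y=P(x_0)+y=\Big(1+\tfrac1{\|P\|}\Big)P(x_0),
$$
so $\|P-Q\|\ge\|(P-Q)(x_0)\|=\big(1+\tfrac1{\|P\|}\big)\|P(x_0)\|=\|P\|+1$; combined with $\|P-Q\|\le\|P\|+\|Q\|=\|P\|+1$ and the value of $\rho$ recalled above, this shows that $Q$ is a farthest point for $P$ in $B_{\mathcal{P}_w(^nE,F)}$, i.e. $P\in R\left(B_{\mathcal{P}_w(^nE,F)}\right)$.

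I do not expect a genuine obstacle here: the only idea is to align the rank-one ``bump'' $(x^*)^n\cdot y$ with the value $P(x_0)$ at the norm-attaining point, so that the triangle inequality $\|P(x_0)+y\|\le\|P(x_0)\|+\|y\|$ becomes an equality; everything else is elementary norm estimates together with the previously computed farthest distance. It is worth noting that this direct construction genuinely uses that $P$ attains its norm on $B_E$ (not merely that $\overline P$ attains it on $B_{E^{**}}$), which is why the statement is phrased in terms of $NA\left(\mathcal{P}(^nE,F)\right)$.
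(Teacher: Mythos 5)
Your proof is correct and follows essentially the same route as the paper: for a norm-attaining $P$ with $\|P(x_0)\|=\|P\|$, one takes $Q=-(x^*)^n\cdot\frac{P(x_0)}{\|P\|}$ with $x^*(x_0)=1$ and evaluates $P-Q$ at $x_0$ to realize the farthest distance $\|P\|+1$. The extra remarks (explicit check that $\|Q\|=1$, the upper bound $\|P-Q\|\le\|P\|+1$) are harmless elaborations of the same argument.
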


\begin{proof}
By the previous observation, it is plain that the polynomial $P\equiv 0$ belongs to $R\left(B_{\mathcal{P}_w(^nE,F)}\right)$. Now, if $P\in NA\left(\mathcal{P}(^nE,F)\right)$, $P\not\equiv 0$, there exists $x\in S_E$ such that $\|P(x)\|=\|P\|$. Let $x^*\in S_{E^*}$ satisfying $x^*(x)=1$.

Consider the polynomial $Q= - (x^*)^n\cdot \frac{P(x)}{\|P\|}\in B_{\mathcal{P}_w(^nE,F)}$. So $Q$ is a farthest point for $P$ because
$$
\|P-Q\|  =  \left\|P+(x^*)^n\cdot \frac{P(x)}{\|P\|}\right\| \ge \left\|P(x) +  \frac{P(x)}{\|P\|} \right\|=\|P\| + 1.
$$
\end{proof}

In \cite{CK}, Choi and Kim proved that if $E$ has the Radon-Nykod\'{y}m property, then the set of norm attaining polynomials of $\mathcal{P}(^nE,F)$ is dense in $\mathcal{P}(^nE,F)$. As a consequence of this result we obtain:

\begin{corollary}
If $E$ has the Radon-Nykod\'{y}m property, then $B_{\mathcal{P}_w(^nE,F)}$ is densely remotal in $\mathcal{P}(^nE,F)$.
\end{corollary}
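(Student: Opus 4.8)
The plan is to read this off immediately from Proposition~\ref{NA-remotal} together with the density theorem of Choi and Kim. First I would invoke \cite{CK}: since $E$ has the Radon--Nikod\'ym property, the set $NA(\mathcal{P}(^nE,F))$ of norm attaining $n$-homogeneous polynomials is dense in $\mathcal{P}(^nE,F)$. Then, by Proposition~\ref{NA-remotal}, we have the inclusion $NA(\mathcal{P}(^nE,F))\subset R(B_{\mathcal{P}_w(^nE,F)})$, so $R(B_{\mathcal{P}_w(^nE,F)})$ contains a dense subset of $\mathcal{P}(^nE,F)$ and is therefore dense. By the definition recalled above, this says precisely that $B_{\mathcal{P}_w(^nE,F)}$ is densely remotal in $\mathcal{P}(^nE,F)$, which is the assertion.

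There is essentially no obstacle: the statement is a formal consequence of the proposition just proved and the cited theorem, and the only things to check are bookkeeping points. One should make sure the Choi--Kim result is quoted in the form that applies to vector-valued $n$-homogeneous polynomials (it is, with $F$ an arbitrary Banach space), and that ``densely remotal'' is being used with exactly the meaning fixed earlier, namely that $R(B_J)$ is dense in $X$. It is worth noting, but not needed here, that this argument only yields \emph{dense} remotality; to obtain \emph{almost} remotality (a dense $G_\delta$ of points with farthest points) one needs the extra $M$-ideal hypothesis and Proposition~\ref{nunca denso}, which is the content of Corollary~\ref{almost remotal}.
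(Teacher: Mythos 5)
Your argument is exactly the one the paper intends: the corollary is stated as an immediate consequence of the Choi--Kim density theorem for norm attaining polynomials under the Radon--Nikod\'ym property combined with the inclusion $NA(\mathcal{P}(^nE,F))\subset R(B_{\mathcal{P}_w(^nE,F)})$ of Proposition~\ref{NA-remotal}. Your proposal is correct and matches the paper's (implicit) proof, including the correct observation that this yields only dense remotality and that almost remotality requires the additional $M$-ideal hypothesis.
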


When $\mathcal{P}_w(^nE,F)$ is an $M$-ideal in $\mathcal{P}(^nE,F)$, the set $R\left(B_{\mathcal{P}_w(^nE,F)}\right)$ does not only contain the set of norm attaining polynomials but it is also contained in the set of all the polynomials whose Aron-Berner extension is norm attaining.

\begin{proposition}\label{AB-remotal}
If $\mathcal{P}_w(^nE,F)$ is an $M$-ideal in $\mathcal{P}(^nE,F)$, then $R\left(B_{\mathcal{P}_w(^nE,F)}\right)\subset AB-NA\left(\mathcal{P}(^nE,F)\right)$.
\end{proposition}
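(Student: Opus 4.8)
The plan is to deduce the statement from Proposition~\ref{nunca denso}(a), applied not to $P$ itself but to a well-chosen perturbation of it by an element of $\mathcal{P}_w(^nE,F)$. First I would unwind the hypothesis: if $P\in R\big(B_{\mathcal{P}_w(^nE,F)}\big)$, pick $Q\in B_{\mathcal{P}_w(^nE,F)}$ with $\|P-Q\|=\rho\big(P,B_{\mathcal{P}_w(^nE,F)}\big)$, which by the observation preceding Proposition~\ref{NA-remotal} equals $\|P\|+1$. The elementary but decisive remark is that subtracting $Q\in\mathcal{P}_w(^nE,F)$ does not change the essential norm:
$$
\|P-Q\|_{es}=d\big(P-Q,\mathcal{P}_w(^nE,F)\big)=d\big(P,\mathcal{P}_w(^nE,F)\big)=\|P\|_{es},
$$
so that
$$
\|P-Q\|_{es}=\|P\|_{es}\le\|P\|<\|P\|+1=\|P-Q\|;
$$
that is, $P-Q$ has norm strictly larger than its essential norm.

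Next I would apply Proposition~\ref{nunca denso}(a) in contrapositive form to $P-Q$: since $\|P-Q\|\neq\|P-Q\|_{es}$, the Aron--Berner extension $\overline{P-Q}=\overline{P}-\overline{Q}$ must attain its norm at some $w\in B_{E^{**}}$. Recalling that the Aron--Berner extension preserves the polynomial norm, this reads
$$
\big\|\overline{P}(w)-\overline{Q}(w)\big\|=\|\overline{P-Q}\|=\|P-Q\|=\|P\|+1.
$$
I would then propagate the equality. Since $w\in B_{E^{**}}$ we have $\|\overline{P}(w)\|\le\|\overline{P}\|=\|P\|$, and since $Q\in\mathcal{P}_w(^nE,F)$ has norm at most $1$ we have $\|\overline{Q}(w)\|\le\|\overline{Q}\|=\|Q\|\le1$ (with $\overline{Q}(w)\in F$). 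Hence
$$
\|P\|+1=\big\|\overline{P}(w)-\overline{Q}(w)\big\|\le\|\overline{P}(w)\|+\|\overline{Q}(w)\|\le\|P\|+1,
$$
which forces $\|\overline{P}(w)\|=\|P\|$. Thus $\overline{P}$ attains its norm at $w$, i.e.\ $P\in AB-NA\big(\mathcal{P}(^nE,F)\big)$.

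The argument is short once the right object is isolated, and I do not anticipate a genuine obstacle. The only ingredients requiring (routine and standard) care are the invariance of the essential norm under perturbations by elements of $\mathcal{P}_w(^nE,F)$ and the fact that the Aron--Berner extension is an isometry; the one conceptual point is to feed $P-Q$, rather than $P$, into Proposition~\ref{nunca denso}(a): for $P$ one may well have $\|P\|=\|P\|_{es}$, in which case that proposition says nothing, whereas the farthest point $Q$ always produces a gap of exactly $1$ between $\|P-Q\|$ and $\|P-Q\|_{es}$.
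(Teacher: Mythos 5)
Your argument is correct, and it reaches the conclusion by a genuinely different route from the paper. You reduce everything to Proposition~\ref{nunca denso}(a): after translating by the farthest point $Q$, the gap $\|P-Q\|=\|P\|+1>\|P\|\ge\|P\|_{es}=\|P-Q\|_{es}$ forces $\overline{P-Q}=\overline{P}-\overline{Q}$ to attain its norm at some $w\in B_{E^{**}}$, and the triangle-inequality squeeze $\|P\|+1\le\|\overline{P}(w)\|+\|\overline{Q}(w)\|\le\|P\|+1$ then pins down $\|\overline{P}(w)\|=\|P\|$. The paper instead works directly with the dual ball: it picks an extreme functional $\Phi$ norming $P-Q$, uses Theorem~B to rule out $\Phi\in Ext B_{\mathcal{P}_w(^nE,F)^\perp}$ (since that would give $\Phi(P)=\|P\|+1$), concludes via Proposition~\ref{extremales}(b) that $\Phi=e_z\otimes y^*$, and then runs essentially the same squeeze on $y^*(\overline{P}(z))-y^*(\overline{Q}(z))$. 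The two proofs both exploit the $M$-ideal hypothesis and both use the isometry $\|\overline{P}\|=\|P\|$ (Davie--Gamelin), which the paper itself invokes at the same step; yours buys brevity by delegating the extreme-point work to the already-stated Bishop--Phelps type result, while the paper's is self-contained modulo Theorem~B and Proposition~\ref{extremales}. The one conceptual point you rightly flag --- that Proposition~\ref{nunca denso}(a) must be fed $P-Q$ rather than $P$, because $\|P\|=\|P\|_{es}$ is entirely possible --- is exactly what makes the reduction work, and the invariance $\|P-Q\|_{es}=\|P\|_{es}$ is the correct justification.
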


\begin{proof}
Let $P\in R\left(B_{\mathcal{P}_w(^nE,F)}\right)$. So, there exists $Q\in B_{\mathcal{P}_w(^nE,F)}$ such that $\|P-Q\|=\|P\| +1$.  Take $\Phi\in Ext B_{\mathcal{P}(^nE,F)^*}$ satisfying
$$
\Phi(P-Q) =\|P-Q\|=\|P\| +1.
$$
Being $\mathcal{P}_w(^nE,F)$ an $M$-ideal in $\mathcal{P}(^nE,F)$, we should have that
$$
\Phi\in Ext B_{\mathcal{P}_w(^nE,F)^*}\qquad \textrm{or}\qquad\Phi\in Ext B_{\mathcal{P}_w(^nE,F)^\perp}.
$$
If $\Phi\in Ext B_{\mathcal{P}_w(^nE,F)^\perp}$, we  obtain  that $\Phi(P-Q)=\Phi(P)$ and so $\Phi(P)=\|P\|+1$, which is not possible. Hence,
it should be $\Phi\in Ext B_{\mathcal{P}_w(^nE,F)^*}$ and, by Proposition \ref{extremales} (b), $\Phi= e_z\otimes y^*$, for certain $z\in S_{E^{**}}$ and $y^*\in S_{F^*}$. Therefore,
$$
\|P\|+1 =\Phi(P-Q)=y^*(\overline{P}(z))-y^*(\overline{Q}(z))\le \|\overline{P}\| + \|\overline{Q}\|=\|P\|+1.
$$

It follows that $y^*(\overline{P}(z))=\|\overline{P}\|$  and so  $\|\overline{P}(z)\|=\|\overline{P}\|$, meaning that $P\in AB-NA\left(\mathcal{P}(^nE,F)\right)$.
\end{proof}

As a consequence of Propositions \ref{nunca denso}, \ref{NA-remotal} and \ref{AB-remotal}, we obtain:
\begin{corollary}\label{almost remotal}
If $E$ is reflexive and $\mathcal{P}_w(^nE,F)$ is an $M$-ideal in $\mathcal{P}(^nE,F)$, then
$$
R\left(B_{\mathcal{P}_w(^nE,F)}\right)=NA\left(\mathcal{P}(^nE,F)\right),
$$
and thus, $\mathcal{P}(^nE,F) \setminus R\left(B_{\mathcal{P}_w(^nE,F)}\right)$ is nowhere dense. This implies that $B_{\mathcal{P}_w(^nE,F)}$ is almost remotal in $\mathcal{P}(^nE,F)$.
\end{corollary}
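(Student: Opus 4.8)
The plan is to obtain the statement purely by assembling Propositions~\ref{nunca denso}, \ref{NA-remotal} and~\ref{AB-remotal}, the only new ingredient being a remark that reflexivity of $E$ trivializes the Aron--Berner extension. So the first step is to record that, since $E$ is reflexive, $B_{E^{**}}$ coincides (under the canonical embedding) with $B_E$, and the restriction of the Aron--Berner extension $\overline{P}$ to $E$ is $P$ itself, composed with the isometric embedding $F\hookrightarrow F^{**}$. Hence $\|\overline{P}(z)\|=\|P(x)\|$ whenever $z$ corresponds to $x\in B_E$, so $\|\overline{P}\|=\|P\|$ and $\overline{P}$ attains its norm on $B_{E^{**}}$ exactly when $P$ attains its norm on $B_E$; that is,
$$AB\text{-}NA\left(\mathcal{P}(^nE,F)\right)=NA\left(\mathcal{P}(^nE,F)\right).$$

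Next I would concatenate the two inclusions already at hand. Proposition~\ref{NA-remotal} gives $NA\left(\mathcal{P}(^nE,F)\right)\subset R\left(B_{\mathcal{P}_w(^nE,F)}\right)$ with no extra hypotheses, while Proposition~\ref{AB-remotal} --- this is the step that uses the $M$-ideal assumption --- gives $R\left(B_{\mathcal{P}_w(^nE,F)}\right)\subset AB\text{-}NA\left(\mathcal{P}(^nE,F)\right)$. By the first step the two outer sets are equal, so all three coincide, which is precisely the displayed equality in the statement.

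For the remaining topological claim I would invoke Proposition~\ref{nunca denso}(b): the set of $P\in\mathcal{P}(^nE,F)$ whose Aron--Berner extension fails to attain its norm is nowhere dense in $\mathcal{P}(^nE,F)$. By the identification of the first step this set is precisely $\mathcal{P}(^nE,F)\setminus NA\left(\mathcal{P}(^nE,F)\right)=\mathcal{P}(^nE,F)\setminus R\left(B_{\mathcal{P}_w(^nE,F)}\right)$, so the complement of $R\left(B_{\mathcal{P}_w(^nE,F)}\right)$ is nowhere dense; its closure has empty interior, hence the complement of that closure is a dense open --- in particular a dense $G_{\delta}$ --- subset of $R\left(B_{\mathcal{P}_w(^nE,F)}\right)$, which is exactly the assertion that $B_{\mathcal{P}_w(^nE,F)}$ is almost remotal in $\mathcal{P}(^nE,F)$. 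There is no genuine difficulty in the argument, since all the substance is carried by the three cited propositions; the only points requiring care are making the reflexivity identification precise (and checking that it really does force $AB\text{-}NA=NA$) and the elementary Baire-category bookkeeping at the end.
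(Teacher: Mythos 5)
Your proposal is correct and matches the paper's intended argument: the paper states this corollary precisely as a consequence of Propositions~\ref{nunca denso}, \ref{NA-remotal} and \ref{AB-remotal}, with reflexivity serving exactly to identify $AB$-$NA\left(\mathcal{P}(^nE,F)\right)$ with $NA\left(\mathcal{P}(^nE,F)\right)$ as you do. The Baire-category bookkeeping at the end is also the standard (and correct) way to pass from ``complement nowhere dense'' to ``contains a dense $G_\delta$.''
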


\section{Sufficient conditions}

In this section we present several sets of sufficient conditions which enable us to ensure that $\mathcal{P}_w(^nE,F)$ is an $M$-ideal in $\mathcal{P}(^nE,F)$. All of them involve bounded nets of compact operators on $E$. The following lemma and proposition are the vector-valued versions of \cite[Lemma 2.1 and Proposition 2.2]{Dim},  the proofs  of which are analogous to those in \cite{Dim}.

\begin{lemma}\label{Pw}
Let $E$ and $F$ be Banach spaces and suppose that there exists a bounded
net $\{S_\alpha\}_\alpha$ of linear operators from $E$ to $E$
satisfying $S_\alpha^* (x^*) \to x^*$, for all $x^*\in E^*$.
Then, for all $P\in\mathcal{P}_w(^nE,F)$, we have that $\|P-P\circ
S_\alpha\|\to 0$.
\end{lemma}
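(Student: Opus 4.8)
The plan is to reduce the statement to the fact that a polynomial which is weakly continuous on bounded sets is automatically weakly \emph{uniformly} continuous on bounded sets --- the same equivalence (going back to Aron, Herv\'es and Valdivia, cf. \cite{AHV}) that underlies the scalar-valued version \cite[Lemma 2.1]{Dim}. Spelled out, this says: for $P\in\mathcal{P}_w(^nE,F)$, every bounded set $B\subset E$ and every $\varepsilon>0$, there are finitely many $x_1^*,\dots,x_m^*\in E^*$ and a $\delta>0$ such that $\|P(u)-P(v)\|<\varepsilon$ whenever $u,v\in B$ satisfy $|x_i^*(u-v)|<\delta$ for $i=1,\dots,m$.

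Granting this, I would argue as follows. Set $C:=\sup_\alpha\|S_\alpha\|=\sup_\alpha\|S_\alpha^*\|$, which is finite since the net is bounded, and assume $C\ge 1$. Fix $P\in\mathcal{P}_w(^nE,F)$ and $\varepsilon>0$, and apply weak uniform continuity of $P$ to the bounded set $B=C\,B_E$, obtaining $x_1^*,\dots,x_m^*\in E^*$ and $\delta>0$ as above. Because $S_\alpha^*(x_i^*)\to x_i^*$ in the norm of $E^*$ for each of the finitely many $i$, there is an index $\alpha_0$ so that $\|x_i^*-S_\alpha^*(x_i^*)\|<\delta$ for all $i=1,\dots,m$ and all $\alpha\ge\alpha_0$. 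For any such $\alpha$ and any $x\in B_E$ we have $x\in B$ and $S_\alpha x\in B$, and
\[
|x_i^*(x-S_\alpha x)|=\bigl|(x_i^*-S_\alpha^*(x_i^*))(x)\bigr|\le\|x_i^*-S_\alpha^*(x_i^*)\|<\delta\qquad(i=1,\dots,m),
\]
so that $\|P(x)-P(S_\alpha x)\|<\varepsilon$ by the choice of $x_1^*,\dots,x_m^*,\delta$. Taking the supremum over $x\in B_E$ gives $\|P-P\circ S_\alpha\|\le\varepsilon$ for all $\alpha\ge\alpha_0$; as $\varepsilon>0$ was arbitrary, $\|P-P\circ S_\alpha\|\to 0$.

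The only genuinely non-formal input is the weak-uniform-continuity equivalence; the rest is a routine finite approximation together with the hypothesis on $\{S_\alpha^*\}_\alpha$. If one prefers to avoid that equivalence, there is a dual route: one checks the identity $(P\circ S_\alpha)^*=C_{S_\alpha}\circ P^*$, where $C_{S_\alpha}(q):=q\circ S_\alpha$ defines a net of operators on $\mathcal{P}_w(^nE)$ uniformly bounded by $C^n$; since $P^*$ is a compact operator from $F^*$ into $\mathcal{P}_w(^nE)$ (as in Lemma~\ref{w^*}) and $C_{S_\alpha}\to\mathrm{Id}$ pointwise on $\mathcal{P}_w(^nE)$ by the scalar-valued case of the lemma, the convergence is uniform on the relatively compact set $P^*(B_{F^*})$, which gives $\|(P\circ S_\alpha)^*-P^*\|\to 0$ and hence $\|P-P\circ S_\alpha\|\to 0$.
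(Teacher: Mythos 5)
Your first argument is correct and is essentially the proof the paper has in mind: the paper omits it, referring to \cite[Lemma 2.1]{Dim}, whose proof runs exactly through the Aron--Herv\'es--Valdivia equivalence $\mathcal{P}_w=\mathcal{P}_{wu}$ on bounded sets followed by the same finite approximation using $S_\alpha^*(x_i^*)\to x_i^*$. Nothing further is needed.
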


\begin{proposition}\label{condition}
Let $E$ and $F$ be Banach spaces and let $n=cd(E,F)$. Suppose that there exists a bounded
net $\{K_\alpha\}_\alpha$ of compact operators from $E$ to $E$
satisfying the following two conditions:
\begin{itemize}
\item $K_\alpha^* (x^*) \to x^*$, for all $x^*\in E^*$.
\item For all $\varepsilon >0$ and all $\alpha_0$ there exists $\alpha >\alpha_0$ such that for every $x\in E$,
$$
\|K_\alpha (x)\|^n + \|x-K_\alpha (x)\|^n \leq (1+\varepsilon)
\|x\|^n.
$$
\end{itemize}
Then,
$\mathcal{P}_w(^nE,F)$ is an $M$-ideal in $\mathcal{P}(^nE,F)$.
\end{proposition}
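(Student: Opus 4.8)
The strategy is to verify the restricted 3-ball property from Theorem~A~(iii): given $P_1,P_2,P_3\in B_{\mathcal{P}_w(^nE,F)}$, $P\in B_{\mathcal{P}(^nE,F)}$ and $\varepsilon>0$, I must produce $Q\in\mathcal{P}_w(^nE,F)$ with $\|P+P_j-Q\|\le 1+\varepsilon$ for $j=1,2,3$. The natural candidate, mimicking the scalar-valued argument of \cite[Proposition 2.2]{Dim}, is
$$
Q_\alpha = P\circ K_\alpha + \sum_{j=1}^3 (P_j - P_j\circ K_\alpha),
$$
for a suitably chosen index $\alpha$. Since each $K_\alpha$ is compact, $P\circ K_\alpha$ is weakly continuous on bounded sets; and by Lemma~\ref{Pw}, $\|P_j - P_j\circ K_\alpha\|\to 0$ for each $j$. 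Hence $Q_\alpha\in\mathcal{P}_w(^nE,F)$, and modulo an error that tends to $0$ with $\alpha$, the quantity $\|P+P_j-Q_\alpha\|$ is close to $\|P - P\circ K_\alpha + P_j\circ K_\alpha\| = \|(P-P\circ K_\alpha) + P_j\circ K_\alpha\|$.

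Now I would estimate this norm pointwise on $B_E$. For $x\in B_E$, writing $K_\alpha(x)$ and $x - K_\alpha(x)$, I want to exploit the second hypothesis: $\|K_\alpha(x)\|^n + \|x - K_\alpha(x)\|^n \le (1+\varepsilon)\|x\|^n$. The point is that $P_j\circ K_\alpha$ only "sees" the part $K_\alpha(x)$ of $x$, while the residual $P - P\circ K_\alpha$ should be controlled in terms of $\|x - K_\alpha(x)\|$; more precisely, one expands $P(x) - P(K_\alpha x)$ using the symmetric $n$-linear form $\overset\vee P$ as a telescoping sum of mixed terms each containing at least one slot $x - K_\alpha(x)$, so that $\|P(x) - P(K_\alpha x)\| \le C\,\|x - K_\alpha(x)\|\,(\|x\| + \|K_\alpha(x)\|)^{n-1}$, with $C$ depending only on $n$ and the bound of $\{K_\alpha\}$. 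Bounding $\|P(x) + P_j(K_\alpha x) - P(K_\alpha x)\|$ by $\|P_j(K_\alpha x)\| + \|P(x) - P(K_\alpha x)\|$ and then combining with the convexity-type inequality $a+b \le (a^n+b^n)^{1/n}\cdot 2^{(n-1)/n}$ (or a sharper comparison tailored to make the constant absorb into $\varepsilon$), I arrive at a bound of the form $(1+\varepsilon)^{1/n}\|x\| \le 1+\varepsilon$ for $x\in B_E$, up to terms vanishing as $\alpha$ grows. Choosing $\alpha$ large enough (using the "for all $\varepsilon>0$ and all $\alpha_0$ there exists $\alpha>\alpha_0$" clause to get the metric estimate and Lemma~\ref{Pw} simultaneously) finishes the verification.

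The role of the hypothesis $n = cd(E,F)$ is to guarantee that $\mathcal{P}_w(^nE,F)$ is actually a \emph{proper} subspace worth discussing and, more importantly, that it coincides with $\mathcal{P}_{w0}(^nE,F)$ (Remark~\ref{n-unico}), which is what makes terms like $P\circ K_\alpha$ land in $\mathcal{P}_w(^nE,F)$ rather than merely in some larger class — compactness of $K_\alpha$ gives weak continuity at $0$, and the critical-degree equality upgrades this to weak continuity on bounded sets. The main obstacle I anticipate is the pointwise norm estimate: getting the constants in the multilinear expansion of $P(x) - P(K_\alpha x)$ to combine with the assumed $\ell_n$-type inequality so that the total bound is genuinely $1+\varepsilon$ and not $1+C\varepsilon$ with $C>1$ depending on $n$. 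This is handled exactly as in the scalar case of \cite{Dim} by absorbing constants: since $\varepsilon>0$ is arbitrary one first runs the argument with a smaller $\varepsilon'$ chosen so that all accumulated constants bring the final bound below $1+\varepsilon$, the key homogeneity being that the residual term $\|x - K_\alpha(x)\|$ carries the full weight of the small parameter from the second hypothesis while $P_j\in B_{\mathcal{P}_w}$ keeps $\|P_j(K_\alpha x)\|$ under control. Since the excerpt states the proof is analogous to \cite[Proposition 2.2]{Dim}, I would simply indicate the substitution of $F$-valued norms for scalar absolute values throughout and note that the Aron–Berner / compactness facts used there have their vector-valued counterparts in Lemma~\ref{w^*} and Lemma~\ref{compacto}.
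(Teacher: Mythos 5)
Your overall strategy (verifying the restricted 3-ball property, building the approximant from $K_\alpha$, using Lemma~\ref{Pw} for the vanishing error terms, and invoking $n=cd(E,F)$ to upgrade weak continuity at $0$ to weak continuity on bounded sets) is the right one, but the key norm estimate fails because you chose the wrong residual term. You work with $P-P\circ K_\alpha$, i.e.\ the difference of values $P(x)-P(K_\alpha x)$, and try to control it by a telescoping expansion of $\overset{\vee}{P}$. For $n\ge 2$ this produces cross terms of size comparable to $\|K_\alpha x\|^{n-k}\|x-K_\alpha x\|^{k}$ with $1\le k\le n-1$; since $\|x-K_\alpha x\|$ is \emph{not} uniformly small on $B_E$ (indeed $K_\alpha$ is compact, so $Id-K_\alpha$ cannot be small in norm on an infinite-dimensional space, and the second hypothesis explicitly allows $\|K_\alpha x\|$ and $\|x-K_\alpha x\|$ to be simultaneously of order $\|x\|$), these terms contribute a quantity bounded away from $0$ independently of $\varepsilon$ and $\alpha$. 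Consequently the constants $C$ and $2^{(n-1)/n}$ in your estimate cannot be absorbed into $\varepsilon$: the obstacle you flag at the end is real, and it is not resolved by first running the argument with a smaller $\varepsilon'$, because the excess is of order $1$, not of order $\varepsilon$.

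The fix --- which is what the proof of \cite[Proposition 2.2]{Dim} does, and what the proofs of Theorem~\ref{largo} and Proposition~\ref{F Moo-space-bis} in this paper make explicit --- is to use the composition $P\circ(Id-K_\alpha)$, i.e.\ $x\mapsto P(x-K_\alpha x)$, instead of the difference $P-P\circ K_\alpha$. Take $R_\alpha=P-P\circ(Id-K_\alpha)$ as the approximant, so that $P+P_j-R_\alpha=P\circ(Id-K_\alpha)+P_j$; replace $P_j$ by $P_j\circ K_\alpha$ at the cost of $\|P_j-P_j\circ K_\alpha\|\to 0$ (Lemma~\ref{Pw}); and estimate pointwise
$\|P(x-K_\alpha x)+P_j(K_\alpha x)\|\le\|x-K_\alpha x\|^{n}+\|K_\alpha x\|^{n}\le(1+\varepsilon)\|x\|^{n}$,
which is exactly the inequality the second hypothesis is designed for, with no extraneous constants. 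For $n=1$ your $P-P\circ K_\alpha$ coincides with $P\circ(Id-K_\alpha)$, which is why the linear argument resembles yours; the whole point of the homogeneous case is that they differ. Finally, the critical-degree hypothesis is needed precisely for $R_\alpha$: the polynomial $P-P\circ(Id-K_\alpha)$ is a priori only weakly continuous on bounded sets at $0$ (because $K_\alpha x_\beta\to 0$ in norm for bounded weakly null nets $\{x_\beta\}$), and $n=cd(E,F)$ gives $\mathcal{P}_{w0}(^nE,F)=\mathcal{P}_w(^nE,F)$. Your term $P\circ K_\alpha$, by contrast, is already weakly continuous on bounded sets without any critical-degree assumption, so in your version that hypothesis is doing no work --- another sign that the decomposition is not the intended one.
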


\begin{remark}\rm
A Banach space $E$ is an $(M_p)$-space ($1\le p\le \infty$) if $\mathcal{K}(E\oplus_p E)$ is an $M$-ideal
in $\mathcal{L}(E\oplus_p E)$. This concept was introduced by Oja and Werner in \cite{OW}. By \cite[Theorem VI.5.3]{HWW}, if $E$  is an $(M_p)$-space with  $p\leq n$, then there exists a bounded net $\{K_\alpha\}_\alpha$ of compact operators from $E$ to $E$ satisfying both conditions of Proposition~\ref{condition}.
\end{remark}

Recall that a Banach space $E$ has a finite dimensional decomposition $\{E_j\}_j$ if each $E_j$ is a finite dimensional subspace of $E$ and  every $x\in E$ has a unique representation of the form
$$
x=\sum_{j=1}^\infty x_j,\qquad \textrm{with } x_j\in E_j, \textrm{ for every } j.
$$
Associated to the decomposition there is a bounded sequence of projections $\{\pi_m\}_m$, given by
$\pi_m\left(\sum_{j=1}^\infty x_j\right)=\sum_{j=1}^m x_j.$ The decomposition is called shrinking if $ \pi_m^*(x^*)\to x^*$, for all $x^*\in E^*$.

It is clear that in this case $\{\pi_m\}_m$ is a bounded sequence of compact operators that satisfies the first item of the previous proposition. Thus, for spaces with shrinking finite dimensional decompositions we state
the following simpler version of Proposition~\ref{condition}.

\begin{corollary}\label{fdd}
Let $E$ and $F$ be Banach spaces and let $n=cd(E,F)$. Suppose that $E$ has a shrinking finite dimensional
decomposition with associate projections $\{\pi_m\}_m$ such that:
\begin{itemize}
\item For all $\varepsilon >0$ and all $m_0\in\mathbb{N}$ there
exists $m>m_0$ such that for every $x\in E$,
$$
\|\pi_m (x)\|^n + \|x-\pi_m (x)\|^n \leq (1+\varepsilon) \|x\|^n.
$$
\end{itemize}
Then,
$\mathcal{P}_w(^nE,F)$ is an $M$-ideal in $\mathcal{P}(^nE,F)$.
\end{corollary}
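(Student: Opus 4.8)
The plan is to observe that \emph{Corollary~\ref{fdd} is a direct specialization of Proposition~\ref{condition}}: one only has to check that the associated projections $\{\pi_m\}_m$, viewed as a net indexed by $\mathbb{N}$, meet the two bullet hypotheses of that proposition, and then invoke it.

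First I would recall the two standard facts about a finite dimensional decomposition $\{E_j\}_j$ and its associated projections $\pi_m$. By the uniform boundedness principle the decomposition constant $\sup_m\|\pi_m\|$ is finite, so $\{\pi_m\}_m$ is a bounded sequence in $\mathcal{L}(E)$; and since each $\pi_m$ has range $E_1\oplus\cdots\oplus E_m$, which is finite dimensional, each $\pi_m$ is compact. Moreover, the decomposition being shrinking means exactly that $\pi_m^*(x^*)\to x^*$ for every $x^*\in E^*$, which is precisely the first bullet of Proposition~\ref{condition}. (This is the remark already made immediately before the statement of the corollary.)

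Next I would match the second condition. Taking the net $\{K_\alpha\}_\alpha$ of Proposition~\ref{condition} to be the sequence $\{\pi_m\}_m$ directed by the usual order on $\mathbb{N}$, the requirement ``for all $\varepsilon>0$ and all $\alpha_0$ there exists $\alpha>\alpha_0$ such that $\|K_\alpha(x)\|^n+\|x-K_\alpha(x)\|^n\le(1+\varepsilon)\|x\|^n$ for every $x\in E$'' becomes verbatim the displayed hypothesis of the corollary. Here one should be mildly careful: the inequality is only assumed to hold \emph{cofinally} along the sequence (for suitable $m>m_0$), not for every $m$; but this is exactly the cofinal form in which Proposition~\ref{condition} is phrased, so no generality is lost.

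Finally, since $n=cd(E,F)$ by hypothesis, all the assumptions of Proposition~\ref{condition} hold for the net $\{\pi_m\}_m$, and the conclusion that $\mathcal{P}_w(^nE,F)$ is an $M$-ideal in $\mathcal{P}(^nE,F)$ follows at once. I do not anticipate any genuine obstacle beyond correctly identifying the net and lining up the two bullet conditions; the substance of the argument is entirely carried by Proposition~\ref{condition} (hence, ultimately, by verifying the restricted 3-ball property of Theorem A using the perturbations $P\circ\pi_m$ together with Lemma~\ref{Pw}).
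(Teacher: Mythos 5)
Your proposal is correct and matches the paper's own treatment exactly: the authors derive Corollary~\ref{fdd} precisely by noting that the shrinking FDD projections $\{\pi_m\}_m$ form a bounded sequence of compact operators satisfying the first bullet of Proposition~\ref{condition}, while the displayed hypothesis is the second bullet, so the corollary is an immediate specialization. Nothing further is needed.
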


\begin{example} \label{elepe}\rm
Let $E=\bigoplus_{\ell_p}X_m$ and $F=\bigoplus_{\ell_q}Y_m$, where  $X_m$ and $Y_m$ are finite
dimensional spaces and $1<p,q<\infty$. From \cite{GJ} we can derive that the critical degree is the integer number $cd(E,F)$ satisfying
$\frac{p}{q}\leq cd(E,F)<\frac{p}{q}+1$. The hypothesis of
Corollary \ref{fdd} are fulfilled if $cd(E,F)\geq p$. When this is the case we thus obtain that $\mathcal{P}_w(^nE,F)$ is an $M$-ideal
in $\mathcal{P}(^nE,F)$.
\end{example}

The conditions in the following theorem were inspired by those of \cite[Lemma VI.6.7]{HWW}. They concern bounded nets of compact operators both in $E$ and in $F$. As an example of this result (see Example \ref{elepe2} below) we can consider the values of $n=cd(\ell_p,\ell_q)$ uncovered by Example \ref{elepe}.

\begin{theorem}\label{largo}
Let $E$ and $F$ be Banach spaces and suppose that there exist bounded nets of compact operators $\{K_\alpha\}_\alpha\subset \mathcal{K}(E)$ and $\{L_\beta\}_\beta\subset \mathcal{K}(F)$ and numbers $1<p,q<\infty$ such that:
\begin{itemize}
\item $K_\alpha^*(x^*)\to x^*$, for all $x^*\in E^*$ and $L_\beta (y)\to y$, for all $y\in F$.
\item For all $\varepsilon >0$ and all $\alpha_0$ there exists $\alpha >\alpha_0$ such that for every $x\in E$,
$$
\|K_\alpha (x)\|^p + \|x-K_\alpha (x)\|^p \leq (1+\varepsilon)^p
\|x\|^p.
$$
\item For all $\varepsilon >0$ and all $\beta_0$ there exists $\beta >\beta_0$ such that for every $y_1, y_2\in F$,
$$
\|L_\beta (y_1)+(Id-L_\beta) (y_2)\|^q \leq (1+\varepsilon)^q
\Big(\|y_1\|^q+\|y_2\|^q\Big).
$$
\end{itemize}
Suppose also that $n=cd(E,F)$ satisfies that $p\leq nq$  and $n<cd(E)$. Then,
$\mathcal{P}_w(^nE,F)$ is an $M$-ideal in $\mathcal{P}(^nE,F)$.
\end{theorem}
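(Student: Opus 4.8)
The plan is to verify the restricted 3-ball property (Theorem A (iii)) for the subspace $J=\mathcal{P}_w(^nE,F)$ inside $X=\mathcal{P}(^nE,F)$. So I fix $Q_1,Q_2,Q_3\in B_{\mathcal{P}_w(^nE,F)}$, a polynomial $P\in B_{\mathcal{P}(^nE,F)}$ and $\varepsilon>0$, and I must produce $Q\in\mathcal{P}_w(^nE,F)$ with $\|P+Q_j-Q\|\le 1+\varepsilon$ for $j=1,2,3$. The natural candidate, mimicking \cite[Lemma VI.6.7]{HWW}, is built from the two nets: take $Q=L_\beta\circ P+\sum_j(Q_j-L_\beta\circ Q_j)\circ K_\alpha$ — or more precisely something of the shape $Q = L_\beta\circ(P\circ K_\alpha) + (Id-L_\beta)\circ\big(\text{combination of the }Q_j\big)$ — chosen so that $Q\in\mathcal{P}_w(^nE,F)$ (which holds because $L_\beta$ and $K_\alpha$ are compact and composition with compact operators, on either side, sends $\mathcal{P}(^nE,F)$ or $\mathcal{P}_w(^nE,F)$ into $\mathcal{P}_w(^nE,F)$; here the hypothesis $n<cd(E)$ enters, via Lemma~\ref{compacto}(a), to guarantee that $P\circ K_\alpha$, being compact, is actually weakly continuous on bounded sets).

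Next I would estimate $\|(P+Q_j-Q)(x)\|$ for $x\in B_E$. Writing $x = K_\alpha(x) + (x-K_\alpha(x))$ and using $n$-homogeneity, $(P+Q_j-Q)(x)$ splits into a ``first'' part supported on $K_\alpha(x)$ and a ``tail'' part involving $x-K_\alpha(x)$, up to cross terms. On the first part the operator $L_\beta$ acts and on the complementary part $Id-L_\beta$ acts, so the third hypothesis (the $\ell_q$-type inequality in $F$) lets me bound the $F$-norm of the sum by $(1+\varepsilon)\big(\|\text{first}\|^q+\|\text{tail}\|^q\big)^{1/q}$. The first term is controlled by $\|P+Q_j-Q\|$ evaluated essentially at $K_\alpha(x)$, which after the choice of $Q$ reduces to something like $\|P(K_\alpha x)\|\le\|K_\alpha x\|^n$; the tail term is controlled, using the second hypothesis (the $\ell_p$-type inequality in $E$) together with $p\le nq$ — i.e. $\|x-K_\alpha x\|^{nq}\le(\|x-K_\alpha x\|^p)^{nq/p}$ and the power-mean inequality — by a bound of the form $(1+\varepsilon)^n\|x\|^n - \|K_\alpha x\|^n$, roughly. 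Combining, the two contributions recombine (again by $p\le nq$, which is exactly what makes the exponents match up so that $a^n + (b^{?}-\dots)$ telescopes) to give $\|(P+Q_j-Q)(x)\|\le(1+C\varepsilon)\|x\|^n\le 1+C\varepsilon$, and rescaling $\varepsilon$ finishes it.

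The delicate point throughout is the bookkeeping of cross terms: $P(x)\ne P(K_\alpha x)+P(x-K_\alpha x)$, there are $2^n-2$ mixed terms from the symmetric $n$-linear form $\overset\vee P$, and these must be absorbed into the $\varepsilon$. The standard device is to observe that the mixed terms each carry at least one factor $x-K_\alpha(x)$ and at least one factor $K_\alpha(x)$, hence are bounded by a quantity that is small relative to $\|x\|^n$ once $K_\alpha$ is close enough to the identity in the relevant sense — but here $K_\alpha^*\to Id$ only weak-star, so ``$\|x-K_\alpha x\|$ small'' is false pointwise and one cannot argue this way directly. Instead, as in \cite{Dim} and \cite{HWW}, I would first reduce (using Lemma~\ref{Pw}, or rather its proof, and the fact that $\mathcal{P}_f(^nE,F)$ is dense in $\mathcal{P}_w(^nE,F)$) to the case where the $Q_j$ are finite-type polynomials and $P$ is replaced by $P-\sum_j$(a finite-type piece), so that on the finite-dimensional ranges involved $K_\alpha$ does converge to the identity in norm; the cross terms are then genuinely negligible. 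Carefully organizing this reduction, and checking that the two operator inequalities survive it, is the main obstacle; the rest is the routine Clarkson-type estimate sketched above. This is precisely the scheme of \cite[Lemma VI.6.7]{HWW} adapted to the $n$-homogeneous vector-valued setting, and I would reference that lemma for the parts that transcribe verbatim.
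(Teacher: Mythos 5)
Your overall framework (3-ball property, a corrector built from both nets, the $\ell_q$-inequality in $F$ combined with the $\ell_p$-inequality in $E$ via $p\le nq$ and the power-mean inequality) is the right one, but the proposal has a genuine gap at its center: you never pin down the approximant, and the candidates you do float are the wrong ones. A single $Q\in\mathcal{P}_w(^nE,F)$ must serve all three $Q_j$ simultaneously, so it cannot contain ``a combination of the $Q_j$''; and any candidate of the form $L_\beta\circ(P\circ K_\alpha)+\cdots$ forces you to compare $P(x)$ with $P(K_\alpha x)$, which is exactly what creates the $2^n-2$ cross terms you then identify as ``the main obstacle''. Your proposed escape --- reducing to finite-type $Q_j$ and a finite-type piece of $P$ via density of $\mathcal{P}_f(^nE,F)$ in $\mathcal{P}_w(^nE,F)$ --- is not available under the stated hypotheses (such density needs an approximation-property assumption that the theorem does not make), and in any case it does not address the cross terms coming from $P$ itself, which is an arbitrary continuous polynomial. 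So as written the argument does not close.

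The missing idea is that the correct corrector is $Q=P-(Id-L_\beta)\circ P\circ(Id-K_\alpha)$, which makes the cross-term problem disappear entirely: one never splits $P(x)$ at all. Indeed, then $P+Q_j-Q=(Id-L_\beta)\circ P\circ(Id-K_\alpha)+Q_j$, and since $\|Q_j-L_\beta\circ Q_j\circ K_\alpha\|\to 0$ (by Lemma~\ref{Pw} together with the compactness of $Q_j$ and $L_\beta y\to y$), it suffices to bound, for $x\in B_E$,
$$
\bigl\|(Id-L_\beta)\bigl(P((Id-K_\alpha)x)\bigr)+L_\beta\bigl(Q_j(K_\alpha x)\bigr)\bigr\|
\le(1+\varepsilon)\Bigl(\|K_\alpha x\|^{nq}+\|(Id-K_\alpha)x\|^{nq}\Bigr)^{\frac 1q}
\le(1+\varepsilon)^{n+1},
$$
using the third hypothesis directly on $y_1=Q_j(K_\alpha x)$, $y_2=P((Id-K_\alpha)x)$, then $p\le nq$ and the second hypothesis. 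Membership $Q\in\mathcal{P}_w(^nE,F)$ is where both degree hypotheses enter: writing $Q=[P-P\circ(Id-K_\alpha)]+L_\beta\circ P\circ(Id-K_\alpha)$, the bracket is weakly continuous on bounded sets at $0$ and hence weakly continuous on bounded sets because $n=cd(E,F)$, while the second summand is a compact polynomial (as $L_\beta$ is compact) and hence lies in $\mathcal{P}_w(^nE,F)$ by Lemma~\ref{compacto}(a) because $n<cd(E)$ --- not because $P\circ K_\alpha$ is compact, as you suggest. With the corrector chosen this way the estimate is the routine computation you sketch, and no finite-type reduction is needed.
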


\begin{proof}
We prove that the 3-ball property holds. Let $P_1, P_2, P_3\in B_{\mathcal{P}_w(^nE,F)}$, $Q\in
B_{\mathcal{P}(^nE,F)}$ and $\varepsilon >0$. Define $P=Q-(Id - L_\beta) Q (Id-K_\alpha)$. We want to show that $P$ is weakly continuous on bounded sets and $\|Q+ P_j-P\|\le 1+\varepsilon$, for $j=1,2,3$, for some convenient choice of $\alpha$ and $\beta$.

To see that $P$ is weakly continuous on bounded sets, we write $P=Q-Q(Id-K_\alpha)+ L_\beta Q (Id-K_\alpha).$ The proof of \cite[Proposition 2.2]{Dim} shows that $Q-Q(Id-K_\alpha)$ is weakly continuous on bounded sets at 0 and since $n=cd(E,F)$, we have that $Q-Q(Id-K_\alpha)$ belongs to $\mathcal{P}_w(^nE,F)$. Also, as $L_\beta Q (Id-K_\alpha)$ is a compact polynomial and  $n<cd(E)$,  Lemma~\ref{compacto} (a) says that it is in $\mathcal{P}_w(^nE,F)$.

Now, to show the $(1+\varepsilon)$-bound, consider  the inequality
$$\|Q+ P_j-P\|\le\|Q + L_\beta P_j K_\alpha -P\| + \|P_j - L_\beta P_j K_\alpha\|.$$ On the one hand, we have:
\begin{eqnarray*}
\|P_j - L_\beta  P_j K_\alpha\|
&\le & \|P_j - P_j K_\alpha\| + \|P_j K_\alpha - L_\beta  P_j K_\alpha\|\\
&\le & \|P_j - P_j K_\alpha\| + \|P_j- L_\beta  P_j\|\|K_\alpha\|^n.
\end{eqnarray*}
By Lemma~\ref{Pw}, $\|P_j - P_j K_\alpha\|\to 0$ with $\alpha$. Also, since $L_\beta$ approximates the identity on compact sets and the $P_j$'s are compact polynomials, we have that $\|P_j- L_\beta  P_j\|\|K_\alpha\|^n \to 0$ with $\beta$, for all $\alpha$.

Furthermore, we can find $\alpha$ and $\beta$ such that:

\begin{eqnarray*}
 \|Q + L_\beta P_j K_\alpha -P\|
& = &\sup_{x\in B_E} \|(Id - L_\beta) Q (Id-K_\alpha)(x) + L_\beta P_j K_\alpha (x)\|\\
& \leq & \sup_{x\in B_E} (1+\varepsilon) \left(\|Q (Id-K_\alpha)(x)\|^q + \|P_j K_\alpha (x)\|^q\right)^{\frac 1q}\\
& \leq & (1+\varepsilon) \sup_{x\in B_E} \left(\|(Id-K_\alpha)(x)\|^{nq} + \| K_\alpha (x)\|^{nq}\right)^{\frac 1q}\\
& \leq & (1+\varepsilon) \sup_{x\in B_E} \left(\|(Id-K_\alpha)(x)\|^{p} + \| K_\alpha (x)\|^{p}\right)^{\frac np}\\
& \leq & (1+\varepsilon) (1+\varepsilon)^{n}= (1+\varepsilon)^{n+1},
\end{eqnarray*}
and the result follows.
\end{proof}

\begin{remark}\rm
If $E$ is an $(M_p)$-space and $F$ is an $(M_q)$-space the conditions about the
nets of compact operators of the previous theorem are fulfilled.
\end{remark}

\begin{example} \label{elepe2}\rm
Let $E=\bigoplus_{\ell_p}X_m$ and $F=\bigoplus_{\ell_q}Y_m$, where  $X_m$ and $Y_m$ are finite dimensional spaces and $1<p,q<\infty$. As we note in Example \ref{elepe}, $cd(E,F)$ is the integer such that $\frac{p}{q}\leq cd(E,F)<\frac{p}{q}+1$. Also we know that $cd(E)$ is the integer satisfying $p\leq cd(E)<p+1$. Now we obtain a result for the cases uncovered by Example \ref{elepe}, since if $n=cd(E,F)<p$ all the hypothesis of the above theorem hold. So,  $\mathcal{P}_w(^nE,F)$ is an $M$-ideal in $\mathcal{P}(^nE,F)$.

In particular, we derive from this example and Example \ref{elepe}, that for every $1<p,q<\infty$, if $n=cd(\ell_p, \ell_q)$, then $\mathcal{P}_w(^n\ell_p,\ell_q)$ is an $M$-ideal in $\mathcal{P}(^n\ell_p,\ell_q)$.
\end{example}

In all the previous results (Proposition \ref{condition}, Corollary \ref{fdd} and Theorem \ref{largo}) the $M$-structure is obtained only in the case $n=cd(E,F)$. On the other hand, by the comments after Corollary \ref{grado de E}, for $\mathcal{P}_w(^nE,F)$ to be a nontrivial $M$-ideal in $\mathcal{P}(^nE,F)$, it is necessary that $cd(E,F)\leq n\leq cd(E)$. Let us show now  some positive results for values of $n$ greater than $cd(E,F)$.

\begin{proposition}\label{F Moo-space}
Let $E$ be a Banach space and $F$ be an $(M_\infty)$-space. If $n<cd(E)$, then $\mathcal{P}_w(^nE,F)$ is an $M$-ideal in $\mathcal{P}(^nE,F)$.
\end{proposition}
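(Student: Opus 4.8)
The plan is to verify the restricted 3-ball property of Theorem~A. Since $F$ is an $(M_\infty)$-space, \cite[Theorem VI.5.3]{HWW} (specializing to $q=\infty$ the situation described in the remark following Theorem~\ref{largo}) supplies a bounded net $\{L_\beta\}_\beta$ of compact operators on $F$ such that $L_\beta(y)\to y$ for every $y\in F$ and such that, for all $\varepsilon>0$ and all indices $\beta_0$, there is $\beta>\beta_0$ with
$$
\|L_\beta(y_1)+(Id-L_\beta)(y_2)\|\le (1+\varepsilon)\max\{\|y_1\|,\|y_2\|\}\qquad\textrm{for all } y_1,y_2\in F .
$$

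Fix $P_1,P_2,P_3\in B_{\mathcal{P}_w(^nE,F)}$, $Q\in B_{\mathcal{P}(^nE,F)}$ and $\varepsilon>0$. I would take $P=L_\beta Q$ for a $\beta$ chosen below. Since $Q(B_E)$ is bounded and $L_\beta$ is compact, $P$ is a compact $n$-homogeneous polynomial, and as $n<cd(E)$, Lemma~\ref{compacto}(a) yields $P\in\mathcal{P}_w(^nE,F)$. Next I would use the decomposition
$$
Q+P_j-P=\big((Id-L_\beta)Q+L_\beta P_j\big)+\big(P_j-L_\beta P_j\big).
$$
Evaluating the first summand at $x\in B_E$ and applying the displayed inequality with $y_1=P_j(x)$ and $y_2=Q(x)$ — recall $\|P_j(x)\|\le1$ and $\|Q(x)\|\le1$ — gives $\|(Id-L_\beta)Q+L_\beta P_j\|\le 1+\varepsilon$, and this bound holds simultaneously for $j=1,2,3$ because the inequality is valid for every pair of vectors. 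For the second summand, each $P_j$ is weakly continuous on bounded sets, hence compact, so $\overline{P_j(B_E)}$ is compact; since the uniformly bounded net $\{L_\beta\}_\beta$ converges to $Id$ uniformly on compact sets, $\|P_j-L_\beta P_j\|\to 0$. Thus one may pick a single $\beta$ that realizes the displayed inequality and also satisfies $\|P_j-L_\beta P_j\|\le\varepsilon$ for $j=1,2,3$, obtaining $\|Q+P_j-P\|\le 1+2\varepsilon$ for each $j$. As $\varepsilon$ was arbitrary, the restricted 3-ball property holds and $\mathcal{P}_w(^nE,F)$ is an $M$-ideal in $\mathcal{P}(^nE,F)$.

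I expect no serious obstacle; the only points needing a little care are routine. One is that a single index $\beta$ can be forced to work for all three $P_j$ at once, which is immediate since there are finitely many of them and the $(M_\infty)$-inequality is uniform in $y_1,y_2$. The other is the standard fact that a uniformly bounded net converging strongly to the identity converges uniformly on compact sets, which is exactly what turns compactness of the $P_j$ into $\|P_j-L_\beta P_j\|\to0$. Note that the hypothesis $n<cd(E)$ is invoked just once — to place the compact polynomial $L_\beta Q$ inside $\mathcal{P}_w(^nE,F)$ — and this is precisely why the borderline value $n=cd(E)$ must be handled separately in Proposition~\ref{F Moo-space-bis}.
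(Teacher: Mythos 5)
Your proof is correct and follows essentially the same route as the paper's: the same choice $P=L_\beta Q$, the same appeal to Lemma~\ref{compacto}(a) via $n<cd(E)$, and the same splitting of $Q+P_j-P$ into $(Id-L_\beta)Q+L_\beta P_j$ plus $P_j-L_\beta P_j$ with the $(M_\infty)$-inequality and compactness of the $P_j$ handling the two pieces. No changes needed.
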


\begin{proof}
Being $F$ an $(M_\infty)$-space, by \cite[Theorem VI.5.3]{HWW}, there exists a net
 $\{L_\beta\}_\beta$ contained in the unit ball of $\mathcal{K}(F)$ satisfying $L_\beta (y)\to y$ for all $y\in F$ such that for any $\varepsilon >0$, there exists $\beta_0$ with
\begin{equation}\label{M-infinito}
\|L_\beta (y_1)+(Id-L_\beta) (y_2)\| \leq \left(1+\frac \varepsilon 2\right)
\max\{\|y_1\|, \|y_2\|\},
\end{equation}
for all $\beta \ge \beta_0$ and for any $y_1, y_2\in F$. Let  $P_1, P_2, P_3\in B_{\mathcal{P}_w(^nE,F)}$ and $Q\in B_{\mathcal{P}(^nE,F)}$, we show that with $P=L_\beta Q$, choosing $\beta$ properly, the 3-ball property is satisfied.

First, note that by Lemma~\ref{compacto} (a), $P$ is weakly continuous on bounded sets. Also,
$\|Q + P_j-P\|\le \|Q+ L_\beta P_j -P\| + \|P_j - L_\beta P_j\|$. Reasoning as in Theorem~\ref{largo}, we have that $ \|P_j - L_\beta P_j\|< \frac \varepsilon 2$ for $\beta$ large enough. Now, from (\ref{M-infinito}) we obtain
$$
\|Q+ L_\beta P_j -P\| = \|(Id - L_\beta)Q + L_\beta P_j\|\le \left(1+\frac \varepsilon 2\right)\max\{\|Q\|,\|P_j\|\} =\left(1+\frac \varepsilon 2\right),
$$
and the result follows.
\end{proof}

\begin{remark}\rm
Let $E$ be a Banach space such that $cd(E)>2$ and let $F$ be an infinite dimensional ($M_\infty$)-space. Then, for any degree $n$, with $1\leq n<cd(E)$, $\mathcal{P}_w(^nE,F)$ is a nontrivial $M$-ideal in $\mathcal{P}(^nE,F)$. This is a simple consequence of the above proposition and the fact that $cd(E,F)=1$.
\end{remark}

The next proposition somehow complements Proposition~\ref{F Moo-space}. It states that if $F$ is an $(M_\infty)$-space, with an additional hypothesis on $E$,
 then $\mathcal{P}_w(^nE,F)$ is  an $M$-ideal in $\mathcal{P}(^nE,F)$ also in the case $n=cd(E)$.

\begin{proposition}\label{F Moo-space-bis}
Let $F$ be an $(M_\infty)$-space and let
$E$ be a Banach space. If $n=cd(E)$ and there exists a
bounded net of compact operators $\{K_\alpha\}_\alpha\subset \mathcal{K}(E)$
satisfying both conditions:
\begin{itemize}
\item $K_\alpha^* (x^*) \to x^*$, for all $x^*\in E^*$.
\item For all $\varepsilon >0$ and all $\alpha_0$ there exists $\alpha >\alpha_0$ such that for every $x\in E$,
$$
\|K_\alpha (x)\|^n + \|x-K_\alpha (x)\|^n \leq (1+\varepsilon)
\|x\|^n,
$$
\end{itemize}
then
$\mathcal{P}_w(^nE,F)$ is an $M$-ideal in $\mathcal{P}(^nE,F)$.
\end{proposition}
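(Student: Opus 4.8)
The plan is to verify the (restricted) $3$-ball property of Theorem~A for $J=\mathcal{P}_w(^nE,F)$ inside $X=\mathcal{P}(^nE,F)$, combining the use of the net $\{K_\alpha\}$ on $E$ (as in Proposition~\ref{condition}) with a net $\{L_\beta\}$ on $F$ supplied by its $(M_\infty)$-structure. By \cite[Theorem VI.5.3]{HWW}, exactly as in Proposition~\ref{F Moo-space}, there is $\{L_\beta\}_\beta\subset B_{\mathcal{K}(F)}$ with $L_\beta y\to y$ for every $y\in F$ and, for every $\varepsilon>0$ and $\beta$ large enough, $\|L_\beta y_1+(Id-L_\beta)y_2\|\le(1+\varepsilon)\max\{\|y_1\|,\|y_2\|\}$.

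Given $P_1,P_2,P_3\in B_{\mathcal{P}_w(^nE,F)}$, $Q\in B_{\mathcal{P}(^nE,F)}$ and $\varepsilon>0$, I would first choose $\alpha$ so that simultaneously $\|K_\alpha(x)\|^n+\|x-K_\alpha(x)\|^n\le(1+\varepsilon)\|x\|^n$ for all $x\in E$ (the hypothesis) and $\|P_j-P_j\circ K_\alpha\|<\varepsilon$ for $j=1,2,3$ (Lemma~\ref{Pw}); then choose $\beta$ so that the above $M_\infty$-inequality holds and $\|P_j-L_\beta P_j\|<\varepsilon$ for $j=1,2,3$ (the $P_j$ are compact and $\{L_\beta\}$ converges to $Id$ uniformly on compact sets). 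The competitor will be
$$
P:=L_\beta\bigl(Q-Q\circ(Id-K_\alpha)\bigr)=L_\beta Q-L_\beta\,Q\circ(Id-K_\alpha).
$$

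The delicate point, and the one I expect to be the main obstacle, is to check that $P\in\mathcal{P}_w(^nE,F)$: here $n=cd(E)$ need not equal $cd(E,F)$, so, unlike in Proposition~\ref{condition} and Theorem~\ref{largo}, being weakly continuous on bounded sets \emph{at $0$} is not automatically enough. Put $R:=Q-Q\circ(Id-K_\alpha)$. If $x_\gamma\overset{w}{\rightarrow}0$ is bounded, then $K_\alpha x_\gamma\to0$ in norm because $K_\alpha$ is compact, whence $\|R(x_\gamma)\|=\|Q(x_\gamma)-Q(x_\gamma-K_\alpha x_\gamma)\|\to0$ by the standard Lipschitz estimate for polynomials on bounded sets; thus $R\in\mathcal{P}_{w0}(^nE,F)$, and hence $L_\beta R\in\mathcal{P}_{w0}(^nE,F)$. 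Since $R(B_E)$ is bounded and $L_\beta$ is a compact operator, $L_\beta R$ is moreover a compact polynomial. As $n=cd(E)$, Proposition~\ref{compacto y w0} applies and yields that $L_\beta R=P$ is weakly continuous on bounded sets. (Thus it is the compactness manufactured by $L_\beta$, together with $n\le cd(E)$, that here plays the role that $n<cd(E)$ plays in Proposition~\ref{F Moo-space}.)

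It then remains to estimate, for $x\in B_E$ and each $j$,
$$
(Q+P_j-P)(x)=(Id-L_\beta)Q(x)+L_\beta\,Q\bigl((Id-K_\alpha)x\bigr)+P_j(x).
$$
I would replace $P_j(x)$ by $L_\beta P_j(K_\alpha x)$, at a cost bounded by $\|P_j-L_\beta P_j\circ K_\alpha\|\le\|P_j-L_\beta P_j\|+\|P_j-P_j\circ K_\alpha\|<2\varepsilon$, and then apply the $M_\infty$-inequality with $y_1=Q((Id-K_\alpha)x)+P_j(K_\alpha x)$ and $y_2=Q(x)$. Since $\|y_2\|=\|Q(x)\|\le1$ and
$$
\|y_1\|\le\|Q((Id-K_\alpha)x)\|+\|P_j(K_\alpha x)\|\le\|(Id-K_\alpha)x\|^n+\|K_\alpha x\|^n\le(1+\varepsilon)\|x\|^n\le1+\varepsilon
$$
by the norm condition on $K_\alpha$, we obtain $\|Q+P_j-P\|\le(1+\varepsilon)^2+2\varepsilon$. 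Carrying out the construction from a sufficiently small initial $\varepsilon$ then gives the $3$-ball property, and Theorem~A completes the proof.
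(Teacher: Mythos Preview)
Your argument is correct and follows essentially the same route as the paper: the same competitor $P=L_\beta\bigl(Q-Q\circ(Id-K_\alpha)\bigr)$, the same use of Proposition~\ref{compacto y w0} (with $n=cd(E)$ and the compactness coming from $L_\beta$) to show $P\in\mathcal{P}_w(^nE,F)$, and the same $M_\infty$-estimate with $y_1=Q((Id-K_\alpha)x)+P_j(K_\alpha x)$ and $y_2=Q(x)$. One cosmetic remark: your bound $\|P_j-L_\beta P_j\circ K_\alpha\|\le\|P_j-L_\beta P_j\|+\|P_j-P_j\circ K_\alpha\|$ is not a direct triangle inequality; the honest decomposition is $P_j-L_\beta P_jK_\alpha=(P_j-L_\beta P_j)+L_\beta(P_j-P_jK_\alpha)$, and then $\|L_\beta\|\le1$ gives the stated bound.
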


\begin{proof}
Let  $P_1, P_2, P_3\in B_{\mathcal{P}_w(^nE,F)}$, $Q\in B_{\mathcal{P}(^nE,F)}$ and $\varepsilon>0$. We will find $P\in \mathcal{P}_w(^nE,F)$ such that the 3-ball property is satisfied. Reasoning as in Theorem~\ref{largo}, we find $\alpha$ and $\beta$ so that $\|P_j-L_\beta P_jK_\alpha\|< \frac\varepsilon 2$. Moreover, $\alpha$ and $\beta$ may be chosen to satisfy at the same time $\|K_\alpha (x)\|^n + \|x-K_\alpha (x)\|^n \leq (1+\tilde\varepsilon)
\|x\|^n$ and $\|L_\beta (y_1)+(Id-L_\beta) (y_2)\| \leq (1+ \tilde\varepsilon)
\max\{\|y_1\|, \|y_2\|\}$ for all $y_1, y_2\in F$; where $\{L_\beta\}_\beta$ is a net in $\mathcal K(F)$, associated to the  $(M_\infty)$-space $F$, and $\tilde\varepsilon$ is such that $(1+\tilde\varepsilon)^2\le 1+\frac\varepsilon 2$.

Let $P$ be the polynomial $P=L_\beta(Q- Q(Id- K_\alpha))$. As in the proof of \cite[Proposition 2.2]{Dim}, we can see that $P$ is weakly continuous on bounded sets at 0. Since $n=cd(E)$ and $P$ is compact, we may appeal to Proposition~\ref{compacto y w0} to derive that $P$ is weakly continuous on bounded sets.

Also we have,
\begin{eqnarray*}
\|Q + P_j  -P\| & \le & \|Q + L_\beta P_j K_\alpha -P\| + \|P_j - L_\beta P_j K_\alpha\|\\
& \leq & \|(Id - L_\beta) Q + L_\beta(P_j K_\alpha + Q(Id - K_\alpha))\| + \frac\varepsilon 2\\
& \leq & (1+\tilde \varepsilon) \sup_{x\in B_E} \max\{\|Q(x)\|,\|P_jK_\alpha (x) + Q(x-K_\alpha (x))\|\} + \frac\varepsilon 2.
\end{eqnarray*}
Now, the hypothesis on $E$ gives us
$$
\|P_jK_\alpha (x) + Q(x-K_\alpha (x))\| \le \|K_\alpha (x)\|^n + \|x-K_\alpha (x)\|^n \leq (1+\tilde \varepsilon),
$$
for all $x\in B_E$, and the result follows.
\end{proof}

\begin{example} \rm Let $E=\ell_p$, with $1<p<\infty$, and let $F$ be an $(M_\infty)$-space. As a consequence of the previous propositions, since $p\le cd(\ell_p)$, $\mathcal{P}_w(^n\ell_p,F)$ is an $M$-ideal in $\mathcal{P}(^n\ell_p,F)$ for all $1\le n \le cd(\ell_p)$.
\end{example}

\bigskip

\section{Polynomials between classical sequence spaces}

This section is devoted to study whether  $\mathcal{P}_w(^nE,F)$ is  an $M$-ideal in $\mathcal{P}(^nE,F)$, for all the values of $n$ between $cd(E,F)$ and $cd(E)$, in the cases $E=\ell_p$ and $F=\ell_q$ or $F$  the Lorentz sequence space $F=d(w,q)$, $1< p, q <\infty$. Recall that given a non increasing sequence $w=(w_j)_j$ of positive real numbers satisfying   $w\in c_0\setminus \ell_1$, the Lorentz sequence space $d(w,q)$ is the space of all sequences $x=(x_j)_j\subset \mathbb K$, such that
$$
\sup_{\sigma} \sum_{j=1}^\infty w_j |x_{\sigma(j)}|^q< \infty,
$$
(where $\sigma$ varies on the set of permutations of $\mathbb N$) endowed with the norm $\|x\|_{d(w,q)}=\displaystyle\sup_{\sigma} \Big(\sum_{j=1}^\infty w_j |x_{\sigma(j)}|^q\Big)^{\frac 1q}$.
We will consider  weights $w=(w_j)_j$ so that $w_1=1$, which implies that the canonical vectors of $d(w,q)$
form a basis of norm 1 elements.

We begin our study with a result about polynomials from a general Banach space $E$ to a Banach space $F$ having a finite dimensional decomposition (FDD) $\{F_n\}_n$. As usual, $\{\pi_m\}_m$ denotes
the sequence of projections associated to the decomposition; that is $\pi_m(y)=\sum_{j=1}^m y_j$ for all $y=\sum_{j=1}^\infty y_j$, with $y_j\in F_j$. Also, we denote by $\pi^m=Id - \pi_m$. When the FDD is unconditional with unconditional constant 1, we have that $\|\pi^m\|\le 1$ and $\|\pi_m + \pi^k\|\le 1$, for all $k\ge m$. In the sequel, we will use, without further mentioning, that for any Banach space $E$ and any $Q\in \mathcal P_w(^n E, F)$, $\|\pi_m Q - Q\| \to 0$, or equivalently, $\|\pi^m Q\| \to 0$, both claims can be derived from the fact that $Q$ is compact.

The following proposition gives conditions under which, if $F$ is a Banach space with 1-unconditional FDD,  $\mathcal{P}_w(^nE,F)$ is not a semi $M$-ideal in $\mathcal{P}(^nE,F)$. This is a polynomial generalization of \cite[Proposition 2]{Oja-91} and our proof is modeled on the proof given in that article. From this, it is obviously inferred that $\mathcal{P}_w(^nE,F)$ is not an $M$-ideal in $\mathcal{P}(^nE,F)$.

\begin{proposition}\label{semi M-ideal}
Let $E$  and $F$ be Banach spaces such that $F$ has an unconditional FDD with unconditional constant equal to 1 and associated projections $\{\pi_m\}_m$. Suppose that there exist polynomials $P\in \mathcal{P}(^nE,F)$ and $Q\in \mathcal{P}_w(^nE,F)$ and numbers $\delta >0$ and $m_0\in \mathbb N$ such that:
\begin{itemize}
\item $0 < \|Q\|\le\|P\|< \delta$,
\item $\|\pi^m P + Q\|\ge \delta$, for all $m\ge m_0$.
\end{itemize}
Then, $\mathcal{P}_w(^nE,F)$ is not a semi $M$-ideal in $\mathcal{P}(^nE,F)$.
\end{proposition}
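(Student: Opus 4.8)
The plan is to argue by contradiction. Assume that $\mathcal{P}_w(^nE,F)$ is a semi $M$-ideal in $\mathcal{P}(^nE,F)$, i.e. that it enjoys the $2$-ball property, and aim at a pair of balls violating it. Since being a semi $M$-ideal is unaffected by rescaling, I may assume $\delta=1$, so that $0<\|Q\|\le\|P\|<1$ while $\|\pi^m P+Q\|\ge 1$ for every $m\ge m_0$. Throughout I would use the two consequences of $1$-unconditionality of the FDD recalled above, namely $\|\pi^m\|\le 1$ and $\|\pi_m+\pi^k\|\le 1$ for $k\ge m$, together with the fact that every polynomial in $\mathcal{P}_w(^nE,F)$ is compact, so that $\|\pi^m R\|\to 0$ for each such $R$ (in particular for $Q$).

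The core of the argument is a single application of the $2$-ball property. One chooses two closed balls in $\mathcal{P}(^nE,F)$ built out of $P$ and $Q$ so that each meets $\mathcal{P}_w(^nE,F)$ — using that $0$ and $-Q$ belong to $\mathcal{P}_w(^nE,F)$ — so that they intersect each other, and with radii as small as the constraints $\|Q\|\le\|P\|<\delta$ (together with $\|Q\|>0$) allow. The $2$-ball property then yields, for each $\varepsilon>0$, a polynomial $R=R_\varepsilon\in\mathcal{P}_w(^nE,F)$ lying within the prescribed distances of the two centres. Next, pick $m\ge m_0$ after $R_\varepsilon$ — hence depending on it, which is legitimate because $R_\varepsilon$ and $Q$ are compact — so large that $\|\pi^m R_\varepsilon\|<\varepsilon$ and $\|\pi^m Q\|<\varepsilon$. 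Applying the norm-one operator $\pi^m$ to the two inequalities defining $R_\varepsilon$, discarding the now negligible tails, and combining the two resulting estimates using $1$-unconditionality and the fact that $\pi^m P$ is "high-supported" while the relevant part of $Q$ is "low-supported", one should obtain $\|\pi^m P+Q\|<1$ for $\varepsilon$ small enough. This contradicts $\|\pi^m P+Q\|\ge 1$, so $\mathcal{P}_w(^nE,F)$ is not a semi $M$-ideal, and in particular not an $M$-ideal, in $\mathcal{P}(^nE,F)$.

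The step I expect to be the obstacle is exactly this final estimate: the crude triangle-inequality bound only gives something of size $\|P\|+\|Q\|$, which the hypotheses do not force below $\delta$, so the two inequalities satisfied by $R_\varepsilon$ have to be exploited jointly rather than one at a time — and this is where the hypothesis $\|\pi^m P+Q\|\ge\delta$ for \emph{all} $m\ge m_0$, the strict inequality $\|P\|<\delta$, and the $1$-unconditionality of the FDD of $F$ must all be used simultaneously. The verifications that the two chosen balls pairwise intersect and meet $\mathcal{P}_w(^nE,F)$, and the passages $\|\pi^m R_\varepsilon\|,\|\pi^m Q\|\to 0$, are routine; the whole scheme follows the pattern of \cite[Proposition 2]{Oja-91}.
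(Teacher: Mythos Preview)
Your outline has the right overall shape, but the specific mechanism you propose for the ``final estimate'' does not work, and you have in fact already put your finger on the trouble spot. If you apply the single projection $\pi^m$ (with $m$ chosen after $R_\varepsilon$ so that both $\|\pi^m Q\|$ and $\|\pi^m R_\varepsilon\|$ are small) to the two inequalities $\|c_1-R_\varepsilon\|\le r$ and $\|c_2-R_\varepsilon\|\le r$, the terms involving $Q$ and $R_\varepsilon$ disappear from \emph{both} and you are left with two copies of the same estimate $\|\pi^m P\|\le r+O(\varepsilon)$. The two balls have become redundant, and from $\|\pi^m P\|\le r$ alone you cannot bound $\|\pi^m P+Q\|$ below $\delta$ unless $r+\|Q\|<\delta$, which the hypotheses do not give. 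So the plan ``apply $\pi^m$, discard tails, combine'' collapses.

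What makes the argument go through --- and this is exactly how the paper (following Oja) does it --- is a \emph{two-index} scheme. First choose $m\ge m_0$ with $\|\pi^m Q\|$ small, and take as centres $\pi^m P\pm Q$ (not $P\pm Q$); the balls of radius $\|P\|$ meet at $\pi^m P$ and contain $\pm Q\in\mathcal{P}_w$. After the $2$-ball property produces $R$, choose a second index $k\ge m$ with $\|\pi^k R\|$ small, and apply the norm-one operator $\pi_m+\pi^k$ (not $\pi^m$) to $\pi^m P\pm Q-R$. Because $\pi_m\pi^m=0$, this yields $\pi^k P+\pi_m Q-\pi_m R$ and $\pi^k P-\pi_m Q-\pi_m R$ up to small errors. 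Now $1$-unconditionality enters not merely as $\|\pi^m\|\le 1$ but as a genuine sign flip on the $\pi_m$-block: $\|\pi^k P+\pi_m Q+\pi_m R\|=\|\pi^k P-\pi_m Q-\pi_m R\|$. Adding the two estimates gives $2\|\pi^k P+Q\|\le 2r+O(\varepsilon)$, and with $r=\tfrac{\|P\|+\delta}{2}-\varepsilon$ this contradicts $\|\pi^k P+Q\|\ge\delta$. The ingredients you are missing are precisely: centres involving $\pi^m P$ rather than $P$; the second index $k$; the operator $\pi_m+\pi^k$; and the sign-change use of unconditionality.
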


\begin{proof}
Fix $\varepsilon>0$ so that $\varepsilon< \frac{\delta - \|P\|}{2}$. Since $\|\pi^m Q\| \to 0$, we may assume that  $\|\pi^mQ\|<\frac\varepsilon 3$, for all $m\ge m_0$. Now, fix $m\ge m_0$ and consider the following two closed balls of radius $\|P\|$: $B_1=B(\pi^mP +Q, \|P\|)$ and $B_2=B(\pi^mP -Q, \|P\|)$. Note that $\pi^mP\in B_1\cap B_2$, $Q\in B_1\cap \mathcal{P}_w(^nE,F)$ and $-Q\in B_2\cap \mathcal{P}_w(^nE,F)$.

If $\mathcal{P}_w(^kE,F)$ is a semi $M$-ideal, then for any $r>\|P\|$, the intersection $B(\pi^mP +Q, r)\cap B(\pi^mP -Q, r)\cap \mathcal{P}_w(^nE,F)$ is non void. Take $r=\frac{\|P\|+ \delta}2 -\varepsilon$ and suppose that there exists $R\in B(\pi^mP +Q, r)\cap B(\pi^mP -Q, r)\cap \mathcal{P}_w(^nE,F)$. Since $\|\pi^k R\| \to 0$, we may choose $k\ge m$ such that $\|\pi^k R\|<\varepsilon /3$. To get a contradiction we estimate $\|\pi^kP +Q\|$. Note that

\begin{equation}\label{not-semi-ideal}
2\|\pi^kP +Q\| \le \|\pi^kP +\pi_mQ - \pi_m R\| + \|\pi^kP +\pi_mQ + \pi_m R\| + 2\|\pi^m Q\|.
\end{equation}

\smallskip

From the equality $(\pi_m + \pi^k)(\pi^mP + Q - R)= \pi^kP + \pi_mQ - \pi_mR + \pi^kQ - \pi^k R$, we obtain:

$$
\|\pi^kP +\pi_mQ - \pi_m R\|\le \|\pi_m + \pi^k\| \|\pi^mP + Q - R\| + \|\pi^kQ\|+ \|\pi^k R\|< r + \frac{2\varepsilon}3.
$$
\smallskip

Also, we have that $\|\pi^kP +\pi_mQ + \pi_m R\| =\|\pi^kP  - \pi_mQ - \pi_m R\|$, since $F$ has 1-unconditional finite dimensional decomposition. Proceeding as before, we obtain:

$$
\|\pi^kP -\pi_mQ - \pi_m R\|\le \|\pi_m + \pi^k\| \|\pi^mP - Q - R\| + \frac{2\varepsilon}3 < r + \frac{2\varepsilon}3.
$$

Finally, using~\eqref{not-semi-ideal}, we have that
$$
2\delta \le 2\|\pi^kP +Q\| < 2r + 2\varepsilon  =\|P\| + \delta < 2\delta.
$$
Thus, we conclude that $\mathcal{P}_w(^nE,F)$ is not a semi $M$-ideal in $\mathcal{P}(^nE,F)$.
\end{proof}

Now we can complete the case $E=\ell_p$ and $F=\ell_q$.

\begin{theorem}\label{lp-lq:casos}
 Let $n=cd(\ell_p,\ell_q)$.
\begin{enumerate}
\item[(a)] $\mathcal{P}_w(^n\ell_p,\ell_q)$ is an $M$-ideal in $\mathcal{P}(^n\ell_p,\ell_q)$.
\item[(b)] $\mathcal{P}_w(^k\ell_p,\ell_q)$ is not a semi $M$-ideal in $\mathcal{P}(^k\ell_p,\ell_q)$, for all $k>n$.
\end{enumerate}
\end{theorem}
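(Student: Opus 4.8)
Statement (a) needs no new argument: with $X_m=Y_m=\mathbb{K}$, Examples~\ref{elepe} and~\ref{elepe2} apply to $E=\ell_p$, $F=\ell_q$, and the last sentence of Example~\ref{elepe2} is exactly the assertion that $\mathcal{P}_w(^n\ell_p,\ell_q)$ is an $M$-ideal in $\mathcal{P}(^n\ell_p,\ell_q)$ when $n=cd(\ell_p,\ell_q)$. So I will concentrate on (b), which I plan to deduce from Proposition~\ref{semi M-ideal}: $\ell_q$ is a Banach space whose unit vector basis is a $1$-unconditional FDD, $\pi_m$ being the projection onto the first $m$ coordinates and $\pi^m=Id-\pi_m$. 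It therefore suffices to produce, for each integer $k>n=cd(\ell_p,\ell_q)$, polynomials $P\in\mathcal{P}(^k\ell_p,\ell_q)$, $Q\in\mathcal{P}_w(^k\ell_p,\ell_q)$ and numbers $\delta>0$, $m_0\in\mathbb{N}$ verifying the two bullet conditions there.

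Write $(e_j)_j$ for the coordinate unit vectors (of $\ell_p$ in the domain, of $\ell_q$ in the range). The single arithmetic input I use is $(k-1)q\ge p$: by Example~\ref{elepe}, $cd(\ell_p,\ell_q)$ is the integer with $p/q\le cd(\ell_p,\ell_q)<p/q+1$, hence $k-1\ge cd(\ell_p,\ell_q)\ge p/q$. I will take
$$
P(x)=\sum_{j\ge 2}x_1\,x_j^{\,k-1}\,e_j,\qquad Q(x)=\|P\|\,x_1^{\,k}\,e_1 .
$$
Here $Q$ is of finite type, so $Q\in\mathcal{P}_w(^k\ell_p,\ell_q)$ and $\|Q\|=\|P\|>0$; and $P\in\mathcal{P}(^k\ell_p,\ell_q)$ since for $\|x\|_p\le 1$ the inequality $(k-1)q\ge p$ gives $\sum_{j\ge 2}|x_j|^{(k-1)q}\le\sum_j|x_j|^{p}\le 1$, whence $\|P(x)\|_q\le|x_1|\le 1$. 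A Lagrange‑multiplier computation (again using $(k-1)q\ge p$, which makes $(\sum_{j\ge 2}|x_j|^{(k-1)q})^{1/q}$ largest when concentrated at one coordinate for fixed tail $\ell_p$‑mass) shows $\|P\|$ is attained at a vector of the form $x_0=a\,e_1+b\,e_2$ with $a,b>0$, $a^{p}+b^{p}=1$ and $a\,b^{k-1}=\|P\|$ (explicitly $a=k^{-1/p}$, $b=((k-1)/k)^{1/p}$, $\|P\|=(k-1)^{(k-1)/p}k^{-k/p}$).

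For $m\ge 1$ I will test against $x_m=a\,e_1+b\,e_{m+1}\in S_{\ell_p}$ (the same $a,b$). Then $P(x_m)=a\,b^{k-1}e_{m+1}=\|P\|\,e_{m+1}$, so $\pi^m P(x_m)=\|P\|\,e_{m+1}$, while $Q(x_m)=\|P\|\,a^{k}e_1$; since $m+1\ge 2$ these have disjoint supports in $\ell_q$, hence $\|\pi^m P(x_m)+Q(x_m)\|_q=\|P\|\bigl(1+a^{kq}\bigr)^{1/q}$. Consequently $\|\pi^m P+Q\|\ge\|P\|\bigl(1+a^{kq}\bigr)^{1/q}$ for every $m\ge 1$. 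Taking $\delta=\|P\|\bigl(1+a^{kq}\bigr)^{1/q}$ and $m_0=1$ gives $0<\|Q\|=\|P\|<\delta$ and $\|\pi^m P+Q\|\ge\delta$ for all $m\ge m_0$, so Proposition~\ref{semi M-ideal} yields that $\mathcal{P}_w(^k\ell_p,\ell_q)$ is not a semi $M$-ideal, a fortiori not an $M$-ideal, in $\mathcal{P}(^k\ell_p,\ell_q)$; this is (b).

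The delicate point, and the one I expect to require the most care, is the choice of $P$. It is not weakly continuous on bounded sets (indeed $a\,e_1+b\,e_j\to a\,e_1$ weakly as $j\to\infty$, while $P(a\,e_1+b\,e_j)=\|P\|\,e_j$ does not converge to $0=P(a\,e_1)$ in norm), yet it is of degree $1$ in the ``tail'' coordinates, so a normalized maximizer may be pushed out to an arbitrarily high coordinate $e_{m+1}$ \emph{without losing any norm}, i.e.\ $\|\pi^m P(x_m)\|_q=\|P\|$ for all $m$; the weakly continuous $Q$ then only has to add a disjointly supported bump on the first coordinate, and the $\ell_q$‑norm's behaviour on disjoint supports turns ``full norm plus something positive'' into something strictly larger than $\|P\|$. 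The naive diagonal choice $P(x)=\sum_j x_j^{k}e_j$ does \emph{not} work: because $kq>p$, its near‑maximizers are forced to be weakly null and $Q$ would vanish along them — the same convexity obstruction that defeats any ``split the $\ell_p$‑budget between $P$ and $Q$'' estimate.
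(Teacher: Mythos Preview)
Your proof is correct and follows essentially the same approach as the paper: part (a) by citing Examples~\ref{elepe} and~\ref{elepe2}, and part (b) by applying Proposition~\ref{semi M-ideal} with $P(x)=x_1(x_j^{k-1})_{j\ge 2}$, $Q(x)=\|P\|\,x_1^k e_1$, and the test vectors $a\,e_1+b\,e_{m+1}$ (the paper uses a harmless shift, writing $P(x)=e_1^*(x)(x_j^{k-1})_{j\ge 2}$ with output starting at the first coordinate and testing at $e_{m+2}$). The norm computation, the value $\delta=\|P\|(1+a^{kq})^{1/q}$ with $a=k^{-1/p}$, and the use of $(k-1)q\ge p$ all match the paper's argument.
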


\begin{proof}
Statement (a) follows from Example~\ref{elepe} and Example~\ref{elepe2}. To prove statement (b) take $k>n$. We will construct polynomials $P\in  \mathcal{P}(^k\ell_p,\ell_q)$ and $Q\in \mathcal{P}_w(^k\ell_p,\ell_q)$ satisfying:   $\|P\|=\|Q\|$ and $\|\pi^mP + Q\|\ge \delta >\|P\|$, for some $\delta >0$, where $\{\pi_m\}_m$ is the sequence of projections associated to the canonical basis of $\ell_q$ and $\pi^m= Id-\pi_m$, for all $m\in \mathbb N$.

We have that $k-1\ge cd(\ell_p,\ell_q)\ge \frac pq$, as shown in Example~\ref{elepe}, so we may define the $k$-homogeneous continuous polynomial $P(x)=e_1^*(x)(x_j^{k-1})_{j\ge 2}$. To compute the norm of $P$, we look, for each $x\in \ell_p$, at the inequality
$$
\|P(x)\|_{\ell_q}=|x_1|\Big(\sum_{j=2}^\infty |x_j|^{(k-1)q}\Big)^{\frac 1q}\le |x_1|\Big(\sum_{j=2}^\infty |x_j|^{p}\Big)^{\frac {k-1}p}.
$$
Then, $\|P\|\le \max\{ab^{k-1}\colon a^p+b^p=1, a, b\ge 0\}=\left[\frac 1k (1-\frac 1k)^{k-1}\right]^{\frac 1p}$. Now, considering
$$
\textstyle \tilde x=\left(\frac1k\right)^{\frac 1p}e_1 + \left(1-\frac 1k\right)^{\frac 1p}e_2,
$$
we obtain a norm one element where $P$ attains the bound $\left[\frac 1k (1-\frac 1k)^{k-1}\right]^{\frac 1p}$.

Let $Q\in  \mathcal{P}_w(^k\ell_p,\ell_q)$ be the polynomial $Q(x)=\|P\|e_1^*(x)^ke_1$. It is clear that $\|P\|=\|Q\|$. Take $m\ge 1$, and $\tilde x=(\frac1k)^{\frac 1p}e_1 + (1-\frac 1k)^{\frac 1p}e_{m+2}$, then $\|\tilde x\|_{\ell_p}=1$ and

$$
\|\pi^mP + Q\|\ge \|(\pi^mP + Q)(\tilde x)\|_{\ell_q}= \textstyle
\left\|(\frac1k)^{\frac 1p}(1-\frac 1k)^{\frac {k-1}p}e_{m+1} + \|P\|(\frac1k)^{\frac kp}e_1\right\|_{\ell_q} =
\|P\|\left( 1 + (\frac 1k)^{\frac{kq}p}\right)^{\frac 1q}.
$$
Then, with $\delta = \left( 1 + (\frac 1k)^{\frac{kq}p}\right)^{\frac 1q}>1$, which is independent of $m$, we obtain the inequality we were looking for. And the theorem is proved.
\end{proof}

Now we focus our attention on spaces of polynomials from $\ell_p$ to $d(w, q)$, $1< p, q <\infty$.  We study whether $\mathcal{P}_w(^k\ell_p,d(w, q))$ is an
$M$-ideal in $\mathcal{P}(^k\ell_p,d(w, q))$ for $k\ge cd(\ell_p,d(w, q))$. To this end we extend to the vector-valued case a couple of results  of \cite{DG} about polynomials from spaces with finite dimensional decompositions.

\begin{lemma}\label{block diagonal} Let $E$ be a Banach space which has an unconditional FDD with associated projections $\{\pi_m\}_m$. For any fixed subsequence $\{m_j\}_j$ of $\mathbb N$, let $\sigma_j=\pi_{m_j} - \pi_{m_{j-1}}$, for all $j$.
Given $P\in \mathcal{P}(^nE,F)$, the application
$$
\tilde P(x)=\sum_{j=1}^\infty P(\sigma_j(x)), \quad for\ all\ x\in E,
$$
defines a continuous $n$-homogeneous polynomial from $E$ to $F$.
\end{lemma}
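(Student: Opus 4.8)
The goal is to show that $\tilde P$ is a well-defined continuous $n$-homogeneous polynomial. The $n$-homogeneity is immediate once well-definedness is established, since each $x\mapsto P(\sigma_j(x))$ is $n$-homogeneous (the $\sigma_j$ are linear) and a locally uniformly convergent sum of $n$-homogeneous polynomials is $n$-homogeneous. So the real content is twofold: first, that the series $\sum_j P(\sigma_j(x))$ converges in $F$ for each $x\in E$, and second, that the resulting map is bounded on $B_E$ (equivalently continuous). I would try to get both at once by bounding the norm of the partial sums uniformly.

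\textbf{Key steps.} First I would fix $x\in E$ and look at a finite sub-sum $\sum_{j\in A} P(\sigma_j(x))$ for a finite set $A$ of indices. Since the FDD of $E$ is unconditional, the blocks $\sigma_j(x)$ live in ``disjoint'' coordinates and $\big\|\sum_{j\in A}\sigma_j(x)\big\|\le C\|x\|$ for the unconditional constant $C$ (indeed $\sum_{j\in A}\sigma_j = \pi_{m_k}-\pi_{m_{k-1}}+\cdots$ telescopes into a difference of projections, which is uniformly bounded). The second idea is to use the associated symmetric $n$-linear form $\overset\vee P$ and the fact that, by polarization, $\|P\|$ controls $\|\overset\vee P\|$ up to the constant $n^n/n!$. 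The natural move is to write, for a finite sub-sum indexed by $A$ and a norming functional $y^*\in S_{F^*}$,
$$
y^*\Big(\sum_{j\in A}P(\sigma_j x)\Big)=\sum_{j\in A} y^*\big(\overset\vee P(\sigma_j x,\dots,\sigma_j x)\big),
$$
and to compare this with $y^*\big(\overset\vee P(s,\dots,s)\big)$ where $s=\sum_{j\in A}\sigma_j(x)$; expanding the latter by multilinearity produces the diagonal terms we want plus many off-diagonal terms. To handle the off-diagonal terms cleanly, I would instead introduce a Rademacher/averaging argument: replace $s$ by $s_\varepsilon=\sum_{j\in A}\varepsilon_j\,\sigma_j(x)$ with signs $\varepsilon_j=\pm1$, note $\|s_\varepsilon\|\le C\|x\|$ by unconditionality, and observe that averaging $\varepsilon\mapsto \varepsilon_{j_1}\cdots\varepsilon_{j_n}$ over all sign choices kills every off-diagonal multilinear term while the diagonal terms $\overset\vee P(\sigma_j x,\dots,\sigma_j x)$ survive only when the averaged monomial has an even exponent pattern --- which forces a slightly more careful bookkeeping. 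A cleaner variant: since the series should converge, it suffices to show the partial sums form a Cauchy sequence and are bounded, and for boundedness one can directly estimate $\big\|\sum_{j=1}^N P(\sigma_j x)\big\|$ by iterating the identity that expresses $P(a+b)$ in terms of $P$ on $a$, $b$ and mixed terms, controlling the accumulated mixed terms by $\|P\|$ and the unconditional constant. Once a bound of the form $\big\|\sum_{j\in A} P(\sigma_j x)\big\|\le M\|x\|^n$ is in hand, uniformly in the finite set $A$, convergence of the series follows because the tail sums $\sum_{j\ge N}P(\sigma_j x)$ are controlled by $\|\pi^{m_{N-1}}x\|^n\to 0$ (the blocks past $N$ only see the tail of $x$), giving a genuine Cauchy estimate, and the same bound gives $\|\tilde P\|\le M$.

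\textbf{Main obstacle.} The delicate point is the combinatorial/analytic control of the off-diagonal multilinear terms when passing from $\sum_j P(\sigma_j x)$ to $P$ evaluated on a single blocked vector --- i.e., showing that the ``cross terms'' $\overset\vee P(\sigma_{j_1}x,\dots,\sigma_{j_n}x)$ with not-all-equal indices do not destroy the uniform bound. The Rademacher averaging trick is the standard device for this (it is exactly how one proves that unconditionally convergent series behave well under homogeneous polynomials), but one must set up the sign-averaging so that it isolates the diagonal cleanly; alternatively one can invoke a known result on polynomials on spaces with unconditional FDD. I expect that the authors (following \cite{DG}) use precisely such an averaging argument, and the remaining verifications --- measurability-free continuity, $n$-homogeneity, and the telescoping bound $\|\sum_{j\in A}\sigma_j\|\le C$ --- are routine.
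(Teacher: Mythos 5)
Your overall architecture is the right one and matches the paper's: reduce everything to a uniform bound $\big\|\sum_{j=N}^{M}P(\sigma_j(x))\big\|\le C\|P\|\,\|x\|^n$ on consecutive partial sums, observe that $\sigma_j(x)=\sigma_j(\pi_{m_M}(x)-\pi_{m_{N-1}}(x))$ for $N\le j\le M$ so that the same bound applied to the tail is controlled by $\|\pi_{m_M}(x)-\pi_{m_{N-1}}(x)\|^n\to 0$ (giving convergence), and note that homogeneity and continuity then come for free. The paper's proof is exactly this, except that the uniform bound is obtained in one line: for $y^*\in B_{F^*}$ one has $\big|y^*\big(\sum_j P(\sigma_j x)\big)\big|\le\sum_j|(y^*\circ P)(\sigma_j x)|$, and the scalar-valued estimate $\sum_j|q(\sigma_j z)|\le C\|q\|\,\|z\|^n$ is quoted as a black box from Dimant--Gonzalo (\cite[Proposition 1.3]{DG}). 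So the route you call the ``alternative'' (invoke a known result on block-diagonal polynomials) is precisely what the authors do, after the trivial reduction to scalar-valued polynomials via norming functionals.

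The genuine gap is in your primary route, the Rademacher averaging. Expanding $P\big(\sum_{j}\varepsilon_j\sigma_j x\big)=\sum_{j_1,\dots,j_n}\varepsilon_{j_1}\cdots\varepsilon_{j_n}\,\overset{\vee}{P}(\sigma_{j_1}x,\dots,\sigma_{j_n}x)$ and averaging over signs, the monomial $\varepsilon_{j_1}\cdots\varepsilon_{j_n}$ survives only when every index occurs with even multiplicity. Since the multiplicities sum to $n$, for odd $n$ \emph{every} term is killed, including the diagonal ones (whose coefficient is $\mathbb{E}[\varepsilon_j^{\,n}]=0$); the average is identically zero and yields no information. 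For even $n\ge 4$ the diagonal survives but so do the unwanted cross terms with even multiplicity pattern (e.g.\ $\overset{\vee}{P}(\sigma_1x,\sigma_1x,\sigma_2x,\sigma_2x)$ when $n=4$), and these are exactly the terms you needed to discard. Only $n=2$ works cleanly. The ``more careful bookkeeping'' you defer to is therefore not a routine verification: isolating the block-diagonal part of a polynomial and bounding it by $C\|P\|\,\|x\|^n$ uniformly in the number of blocks is the actual content of \cite[Proposition 1.3]{DG}, and it requires a different device than plain sign-averaging. If you replace that paragraph by the reduction $P\rightsquigarrow y^*\circ P$ plus the citation, the rest of your argument (the telescoping/unconditionality bound on $\sum_{j\in A}\sigma_j$, the tail estimate, the homogeneity) is correct and coincides with the paper's proof.
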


\begin{proof}
We first show that the series $\sum_{j=1}^\infty P(\sigma_j(x))$ is convergent for every $x\in E$. Indeed, by \cite[Proposition 1.3]{DG}, there exists $C>0$ such that

\begin{eqnarray*}
\Big\|\sum_{j=N}^M P(\sigma_j(x))\Big\| & \le & \sup_{y^*\in B_{F^*}} \sum_{j=N}^M |y^*\circ P(\sigma_j(x))|\\
& = & \sup_{y^*\in B_{F^*}} \sum_{j=N}^M |y^*\circ P(\sigma_j(\pi_{m_M}(x) - \pi_{m_{N-1}}(x)))|\\
& \le & C\|P\| \|\pi_{m_M}(x) - \pi_{m_{N-1}}(x)\|^n,
\end{eqnarray*}
which converges to 0 with $M$ and $N$. Then, $\tilde P(x)$ is well defined and $\|\tilde P\|\le C\|P\|$.
\end{proof}

Recall that whenever a Banach space $E$ has a shrinking FDD, by \cite{AHV}, $\mathcal{P}_w(^nE,F)=\mathcal{P}_{wsc}(^nE,F)$. This allows us to work with sequences instead of nets.

\begin{proposition}\label{todos son wsc0}
Let $E$ be a Banach space with an unconditional FDD and let $F$ be a Banach space. For any $n\in \mathbb N$, the following are equivalent:
\begin{enumerate}
\item[(i)] $\mathcal{P}(^nE,F)=\mathcal{P}_{wsc}(^nE,F)$.
\item[(ii)] $\mathcal{P}(^nE,F)=\mathcal{P}_{wsc0}(^nE,F)$.
\end{enumerate}
\end{proposition}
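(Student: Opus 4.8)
The plan is to prove the nontrivial implication $(ii)\Rightarrow(i)$, since $(i)\Rightarrow(ii)$ is immediate from the inclusion $\mathcal{P}_{wsc}(^nE,F)\subset\mathcal{P}_{wsc0}(^nE,F)$. So assume every $P\in\mathcal{P}(^nE,F)$ is weakly sequentially continuous at $0$, and let $P\in\mathcal{P}(^nE,F)$ be arbitrary; we must show $P$ is weakly sequentially continuous everywhere. Suppose not: then there is a weakly convergent sequence $x_k\overset{w}{\to}x$ in $E$ with $P(x_k)\not\to P(x)$. After passing to a subsequence we may assume $\|P(x_k)-P(x)\|\ge\eta>0$ for all $k$, and that $(x_k)_k$ is bounded.

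The key step is a gliding-hump argument to manufacture, from the failure of weak continuity at the point $x$, a failure of weak continuity at $0$, contradicting the hypothesis. Writing $y_k=x_k-x\overset{w}{\to}0$, the difference $P(x_k)-P(x)=P(x+y_k)-P(x)$ expands via the symmetric multilinear form $\overset\vee P$ into a sum $\sum_{j=1}^{n}\binom{n}{j}\overset\vee P(x,\dots,x,y_k,\dots,y_k)$ with $j$ copies of $y_k$; the term $j=0$ cancels. The idea is to use the unconditional FDD on $E$: since $x$ is fixed it is essentially supported (up to small error) on finitely many of the $E_i$, and since $(y_k)_k$ is weakly null one can pass to a further subsequence that is, up to arbitrarily small perturbation, a block sequence supported on successive blocks that eventually avoid the support of $x$. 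Using Lemma~\ref{block diagonal}, one builds a polynomial $\tilde P$ associated to a suitable block decomposition $\{\sigma_j\}_j$ so that, evaluated on the block-perturbed sequence $z_k$, $\tilde P(z_k)$ reproduces (a fixed nonzero multiple of) the offending terms $\overset\vee P(x,\dots,y_k,\dots,y_k)$ but now with $z_k\overset{w}{\to}0$; the continuity of $\tilde P$ (guaranteed by Lemma~\ref{block diagonal}) together with $\|\tilde P(z_k)\|\ge\eta/2$ for large $k$ then contradicts the assumed weak sequential continuity of $\tilde P$ at $0$. Here one exploits that the FDD is shrinking so that $\mathcal{P}_{wsc}$ and $\mathcal{P}_w$ coincide, letting us argue with sequences throughout, and that $\|\pi^m P\|\to 0$ and $\|\pi_m x - x\|\to 0$ to control the perturbation errors.

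The main obstacle I anticipate is the bookkeeping in isolating a single nonzero homogeneous "cross term" $\overset\vee P(x,\dots,x,y_k,\dots,y_k)$ (for a fixed $j$ with $1\le j\le n$) out of the full binomial expansion, since a priori the obstruction to continuity could be spread across several values of $j$ with cancellation preventing any single one from being bounded below. One handles this by a polarization/dilation trick: replacing $y_k$ by $ty_k$ for a fixed scalar $t$ (or by averaging over several values of $t$, using a Vandermonde argument) one can extract from the polynomial $t\mapsto P(x+ty_k)-P(x)$ the individual coefficients $\overset\vee P(x,\dots,x,y_k,\dots,y_k)$ as linear combinations of finitely many evaluations $P(x+t_iy_k)$, each of which is controlled by a polynomial in $\mathcal{P}(^nE,F)$ evaluated on a weakly null sequence; if all these stayed bounded, so would every cross term, forcing $P(x_k)\to P(x)$, a contradiction. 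This localizes the obstruction to a definite $j$, and from there the block-diagonal construction of the previous paragraph applies. The remaining estimates — that the perturbations coming from $\|\pi_m x-x\|$ and from truncating $(y_k)_k$ to blocks are negligible, and that $\tilde P$ indeed captures the relevant term — are routine and I would not grind through them.
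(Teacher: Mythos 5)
Your overall strategy --- reduce to (ii)$\Rightarrow$(i), expand $P(x+y_k)-P(x)=\sum_{j=1}^{n}\binom{n}{j}\overset\vee P(x^{n-j},y_k^{\,j})$, locate an offending index $j_0$, perturb $(y_k)_k$ to a block sequence, and invoke block diagonal machinery --- is the route the paper takes (it defers to \cite[Corollary 1.7]{DG} together with Lemma~\ref{block diagonal}). Two minor remarks: the Vandermonde/dilation step is superfluous, since if each of the finitely many terms $\overset\vee P(x^{n-j},y_k^{\,j})$ tended to $0$ the whole sum would, so some $j_0$ is bounded below along a subsequence; and shrinkingness of the FDD is neither assumed nor needed, because the statement is purely sequential and the gliding hump only uses that each $\pi_m$ has finite rank (hence is weak-to-norm continuous).

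The genuine gap is in the step you dismiss as routine: how one manufactures an $n$-homogeneous polynomial failing weak sequential continuity at $0$ from the offending term $\overset\vee P(x^{n-j_0},y_k^{\,j_0})$ when $1\le j_0\le n-1$. Lemma~\ref{block diagonal} produces $\tilde P(z)=\sum_i P(\sigma_i z)$, and for a weakly null block sequence $z_k$ supported in the $k$-th block one simply gets $\tilde P(z_k)=P(z_k)$; the fixed vector $x$ cannot occupy $n-j_0$ slots of the multilinear form when the input is weakly null, so no choice of $z_k$ makes $\tilde P(z_k)$ ``reproduce'' $\overset\vee P(x^{n-j_0},y_k^{\,j_0})$ as you assert. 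The missing idea is to move $x$ into the polynomial and fill the vacated slots with block-supported norming functionals: set $Q(u)=\overset\vee P(x^{n-j_0},u^{j_0})$ (a $j_0$-homogeneous polynomial with $\|Q(y_k)\|\ge\delta$, where $(y_k)_k$ is seminormalized and, after perturbation, blocked by $\{\sigma_k\}_k$), choose uniformly bounded $\phi_k\in E^*$ with $\phi_k=\phi_k\circ\sigma_k$ and $\phi_k(y_k)=1$, and define $S(z)=\sum_k\phi_k(z)^{n-j_0}Q(\sigma_k z)$, so that $S(y_k)=Q(y_k)$ while $y_k\overset{w}{\to}0$. This $S$ is a block diagonal sum of \emph{varying} polynomials $\phi_k^{n-j_0}\cdot Q$, so its continuity is not covered by Lemma~\ref{block diagonal} as stated and needs the unconditionality estimate of \cite[Proposition 1.3]{DG} in that stronger form; this is precisely where the unconditional FDD enters and is the substance of the scalar result the paper adapts. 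Without this construction the argument does not close.
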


\begin{proof}
By means of the previous lemma, the scalar valued result given in \cite[Corollary 1.7]{DG} (see also \cite{BR-98}) can be easily modified to obtain this vector valued version.
\end{proof}

In \cite{Oja-91}, Eve Oja studies when $\mathcal K(\ell_p,d(w, q))$ is an $M$-ideal in $\mathcal L(\ell_p,d(w, q))$. In Proposition 1 of that article, she establishes a criterium to ensure that every continuous linear operator is compact. A polynomial version of this result can be stated as follows.

\begin{proposition}\label{oja general}
Let $\{e_j\}_j$ and $\{f_j\}_j$ be sequences in Banach spaces $E$ and $F$, respectively, satisfying:
\begin{itemize}
\item For any semi-normalized weakly null sequence $\{x_m\}_m\subset E$, there exists a subsequence $\{x_{m_j}\}_j$ and an operator $T\in \mathcal L(E)$ such that $T(e_j)=x_{m_j}$, for all $j$.
\item For any semi-normalized weakly null sequence $\{y_m\}_m\subset F$, there exists a subsequence  $\{y_{m_j}\}_j$ and an operator $S\in \mathcal L(F)$ such that $S(y_{m_j})={f_j}$, for all $j$.

\item For any subsequence $\{e_{j_l}\}_l$ of $\{e_j\}_j$, there exists an operator $R\in \mathcal L(E)$ such that $R(e_l)=e_{j_l}$, for all $l$.
\end{itemize}
Take $n< cd(E)$ and suppose that it does not exist a polynomial $P\in \mathcal{P}(^nE,F)$ such that $P(e_j)=f_j$, for every $j$. Then,  $\mathcal{P}(^nE,F)=\mathcal{P}_{wsc0}(^nE,F)$.
\end{proposition}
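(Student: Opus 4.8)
The plan is to argue by contraposition at the level of weakly null sequences: assuming $\mathcal{P}(^nE,F)\neq\mathcal{P}_{wsc0}(^nE,F)$, I would produce a polynomial $P\in\mathcal{P}(^nE,F)$ with $P(e_j)=f_j$ for all $j$, contradicting the hypothesis. So suppose $R\in\mathcal{P}(^nE,F)$ is not weakly sequentially continuous at $0$; then there is a semi-normalized weakly null sequence $\{x_m\}_m$ in $E$ with $R(x_m)\not\to 0$, and after passing to a subsequence we may assume $\{R(x_m)\}_m$ is itself semi-normalized (using $n<cd(E)$ to rule out a weakly null-but-not-null obstruction — more precisely, the polynomial $R$ is weak-to-weak continuous on bounded sets, hence $R(x_m)\overset w\to 0$, so after rescaling we get a semi-normalized weakly null sequence $\{y_m\}_m := \{R(x_m)\}_m$ in $F$).

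Now I feed these two sequences into the three hypotheses. By the first bullet, passing to a further subsequence of $\{x_m\}$, there is $T\in\mathcal L(E)$ with $T(e_j)=x_{m_j}$ for all $j$. By the second bullet, applied to the weakly null sequence $\{y_{m_j}\}_j$ (which is still semi-normalized), after one more subsequence there is $S\in\mathcal L(F)$ with $S(y_{m_{j_l}})=f_l$ for all $l$. By the third bullet, there is $R'\in\mathcal L(E)$ with $R'(e_l)=e_{j_l}$ for all $l$. Then the composition
$$
P := S\circ R\circ T\circ R' \in \mathcal{P}(^nE,F)
$$
(a bounded linear map pre- and post-composed with a continuous $n$-homogeneous polynomial is again a continuous $n$-homogeneous polynomial) satisfies, for each $l$,
$$
P(e_l) = S\big(R(T(R'(e_l)))\big) = S\big(R(T(e_{j_l}))\big) = S\big(R(x_{m_{j_l}})\big) = S\big(y_{m_{j_l}}\big) = f_l.
$$
This contradicts the assumed non-existence of such a $P$. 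Hence $\mathcal{P}(^nE,F)=\mathcal{P}_{wsc0}(^nE,F)$.

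The one place that needs genuine care — and the main obstacle — is the reduction at the start: from a polynomial that fails to be weakly sequentially continuous at $0$, one must extract a weakly null sequence in $E$ whose image is a \emph{semi-normalized weakly null} sequence in $F$. The hypothesis $n<cd(E)$ is exactly what guarantees every scalar $n$-homogeneous polynomial on $E$ is weakly continuous on bounded sets, hence $P$ is weak-to-weak continuous on bounded sets (as in Lemma~\ref{compacto}(a)), which forces $R(x_m)\overset w\to 0$; then it fails to go to zero in norm only if it stays bounded away from $0$, and normalizing gives the semi-normalized weakly null sequence we need. One should also note that passing to successive subsequences of $\{x_m\}$ and relabelling does not disturb the first hypothesis, since it is stated for arbitrary semi-normalized weakly null sequences; the same remark applies to the second and third bullets. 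The rest is the bookkeeping of composing the four maps, which I would keep brief.
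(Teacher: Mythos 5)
Your argument is correct and is essentially the paper's own proof: take a polynomial failing weak sequential continuity at $0$, use $n<cd(E)$ to get a semi-normalized weakly null image sequence, and then compose the three hypothesized operators with the polynomial to manufacture $P$ with $P(e_j)=f_j$, contradicting the assumption. The extra care you take with the successive subsequences and the role of the third bullet (which the paper leaves implicit) is accurate and does not change the argument.
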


\begin{proof}
Suppose there exists $P \in \mathcal{P}(^nE,F)$ which is not in
$\mathcal{P}_{wsc0}(^nE,F)$. Then, there exists a weakly null sequence $(x_m)_m$ such that $\|P(x_m)\|>\varepsilon$, for some $\varepsilon>0$ and all $m$. As $n<cd(E)$, $(P(x_m))_m$ is weakly null. Now, we may find a subsequence $(x_{m_j})_j$ and operators $R, T\in \mathcal L(E)$ and $S\in\mathcal L(F)$ satisfying:
$$
e_j\overset{T\circ R}{\longrightarrow} x_{m_j}\overset{P}{\longrightarrow}
P(x_{m_j})\overset{S}{\longrightarrow} f_j,$$
which is a contradiction since $S\circ P \circ T\circ R$ belongs to $\mathcal{P}(^nE,F)$.
\end{proof}

\begin{remark}\label{Pek=fk}\rm
If the Banach space $E$ has an unconditional basis $\{e_j\}_j$ with coordinate functionals $\{e_j^*\}_j$ and $\{f_j\}_j$ is a sequence in the Banach space F, we derive from Lemma \ref{block diagonal} that the existence of a polynomial $P\in \mathcal{P}(^nE,F)$ such that $P(e_j)=f_j$, for all $j$, is equivalent to the existence of the polynomial $\widetilde{P}\in \mathcal{P}(^nE,F)$ given by
$$
\widetilde{P}(x)=\sum_{j=1}^\infty  \left(e_j^*(x)\right)^n\, f_j,\qquad\textrm{for all }x\in E.\quad
$$
When $E$ and $F$ are Banach sequence spaces with canonical bases $\{e_j\}_j$ and $\{f_j\}_j$ respectively, we write the polynomial above as $\widetilde{P}(x)=(x_j^n)_j$. \end{remark}

Let $1<p,q<\infty$. To study whether $\mathcal{P}_w(^n\ell_p,d(w, q))$ is an $M$-ideal in $\mathcal{P}(^n\ell_p,d(w, q))$ for $n\geq cd(\ell_p,d(w, q))$, we need first to establish the value of the critical degree, $cd(\ell_p,d(w, q))$. To this end and in view of the previous remark and proposition, the point is to determine the values of $n$, $p$ and $q$ such that the polynomial $x\mapsto (x_j^n)_j$, from $\ell_p$ to $d(w,q)$, is well defined. For $1\le r <\infty$ we use the standard notation $s=r^*$ to denote de conjugate number of $r$: $\frac 1r + \frac 1s =1$.

\begin{proposition}\label{ej_ellp_lorentz}
The polynomial $P(x)=(x_j^n)_j$ belongs to $\mathcal{P}(^n\ell_p,d(w, q))$ if and only if one of the following two conditions holds:
\begin{enumerate}
\item[(a)] $n\ge \frac{p}{q}$. In this case, $\|P\|=1$.
\item[(b)] $n<\frac{p}{q}$ and $w\in \ell_{s}$, for $s=(\frac{p}{nq})^*$. In this case, $\|P\|= \|w\|_{\ell_{s}}^{\frac 1q}$.
\end{enumerate}
\end{proposition}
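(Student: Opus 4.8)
The plan is to compute directly the norm of $P(x) = (x_j^n)_j$ as an element of $d(w,q)$ for $x \in B_{\ell_p}$, treating the two regimes $n \ge p/q$ and $n < p/q$ separately. The key observation is that since the $x_j^n$ are being measured in the Lorentz norm, the supremum over permutations in the definition of $\|\cdot\|_{d(w,q)}$ is attained by pairing the largest weights with the largest coordinates; thus if we assume (as we may, by rearranging and using that $d(w,q)$ is a symmetric sequence space) that $|x_1| \ge |x_2| \ge \cdots$, then $\|P(x)\|_{d(w,q)}^q = \sum_{j=1}^\infty w_j |x_j|^{nq}$. So the problem reduces to the extremal question
\[
\|P\|^q = \sup\Big\{ \sum_{j=1}^\infty w_j t_j^n : t_j \ge 0,\ t_1 \ge t_2 \ge \cdots,\ \sum_j t_j^{p/q} \le 1 \Big\},
\]
after the substitution $t_j = |x_j|^q$ (note $\sum_j |x_j|^p \le 1$ becomes $\sum_j t_j^{p/q}\le 1$).

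First I would handle case (a), $n \ge p/q$. Here I write $u_j = t_j^{p/q}$, so $\sum_j u_j \le 1$ with $u_1 \ge u_2 \ge \cdots$, and $t_j^n = u_j^{nq/p}$ with exponent $nq/p \ge 1$. Since $w_1 = 1$ and $w_j \le 1$ for all $j$, and since $\sum_j u_j^{nq/p} \le \sum_j u_j \le 1$ with equality approached at $u = e_1$, I get $\sum_j w_j u_j^{nq/p} \le \sum_j u_j^{nq/p} \le 1$, and taking $x = e_1$ shows $\|P\| = 1$ exactly. I should double-check that $P$ genuinely maps into $d(w,q)$ and is continuous — boundedness on $B_{\ell_p}$ gives continuity of the homogeneous polynomial automatically, and the well-definedness follows from the finiteness of the supremum, i.e. from the norm computation itself.

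For case (b), $n < p/q$, the exponent $r := p/(nq) > 1$ is the relevant one: with $u_j = t_j^{p/q}$ again, $t_j^n = u_j^{1/r}$, so I must evaluate
\[
\sup\Big\{ \sum_j w_j u_j^{1/r} : u_j \ge 0,\ \sum_j u_j \le 1 \Big\}.
\]
By Hölder's inequality with conjugate exponents $r$ and $s = r^* = (p/(nq))^*$, $\sum_j w_j u_j^{1/r} \le \big(\sum_j w_j^s\big)^{1/s}\big(\sum_j u_j\big)^{1/r} \le \|w\|_{\ell_s}$, which is finite precisely when $w \in \ell_s$; if $w \notin \ell_s$ one shows the supremum is $+\infty$ by testing on $u_j = c_N/j^{?}$-type sequences, or more simply on $u_j = w_j^{s-1}/\sum_{i\le N} w_i^{s}$ truncated, forcing $P$ not to map into $d(w,q)$. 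For the equality in the finite case, the Hölder bound is attained in the limit by $u_j \propto w_j^{s-1}$ (the usual equality condition $u_j^{1/r} \propto$ a power of $w_j$), so translating back through $u_j = |x_j|^p$ and normalizing gives $\|P\| = \|w\|_{\ell_s}^{1/q}$ (the final $1/q$ coming from the $q$-th root in the Lorentz norm versus the $q$-scaling in $t_j = |x_j|^q$). I expect the main obstacle to be the bookkeeping with the chain of exponent substitutions $|x_j|^p \rightsquigarrow u_j$, $t_j^n \rightsquigarrow u_j^{1/r}$, and the final $q$-th root, together with the care needed to justify that the extremizing sequence can be taken non-increasing so that the Lorentz supremum over permutations is the identity permutation — this last point uses the classical fact that $\sum_j w_j a_{\sigma(j)}$ is maximized over $\sigma$ when $(a_{\sigma(j)})$ is arranged in decreasing order, matching the decreasing weights $w_j$.
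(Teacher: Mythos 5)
Your proposal follows essentially the same route as the paper: the pointwise bound $|x_j|^{nq}\le |x_j|^p$ on $B_{\ell_p}$ (together with $w_j\le w_1=1$) for $n\ge p/q$, H\"older with conjugate exponents $p/(nq)$ and $s$ for $n<p/q$, and divergence of the relevant sum when $w\notin\ell_s$. One exponent needs fixing in your bookkeeping: the H\"older equality condition is $w_j^s\propto u_j$, so the extremizer is $u_j=|x_j|^p\propto w_j^{s}$ (i.e.\ $|x_j|\propto w_j^{s/p}$, which is the paper's $\tilde x=W^{-s/p}(w_j^{s/p})_j$), not $u_j\propto w_j^{s-1}$; with that correction both the attainment of $\|w\|_{\ell_s}^{1/q}$ and your truncated test sequences for $w\notin\ell_s$ go through. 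The only other (harmless) difference is that for $w\notin\ell_s$ the paper exhibits a single $\tilde x\in\ell_p$ with $P(\tilde x)\notin d(w,q)$, whereas you show unboundedness of $P$ on $B_{\ell_p}$; either rules out $P\in\mathcal{P}(^n\ell_p,d(w,q))$.
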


\begin{proof}
Let $(e_j)_j$ and $(f_j)_j$ be the canonical bases of $\ell_p$ and
$d(w, q)$, respectively. Suppose that $n \ge \frac pq$, as $\|w\|_\infty=1$, we have
$$
\|P(x)\|_{d(w, q)}=\sup_\sigma \Big( \sum_{j=1}^\infty w_j|x_{\sigma(j)}|^{nq}\Big)^{\frac 1q} \le \|x\|_{\ell_p}^n.
$$
Then, $P$ is a well defined polynomial with norm less than or equal to 1. Also, $P(e_j)=f_j$ implies $\|P\|=1$.

Now, suppose that $n<\frac pq$ and $w\in \ell_{s}$, with  $s=(\frac{p}{nq})^*$. Put $W=\|w\|_{\ell_{s}}$, by H\"older inequality, we have
$$
\|P(x)\|_{d(w, q)}=\sup_\sigma \Big( \sum_{j=1}^\infty w_j|x_{\sigma(j)}|^{nq}\Big)^{\frac 1q} \le W^{\frac 1q} \|x\|_{\ell_p}^n.
$$
Thus, $\|P\|\le W^{\frac 1q}$ and considering $\tilde x=W^{-\frac sp}(w_j^{\frac sp})_j\in S_{\ell_p}$, we obtain that $\|P\|= W^{\frac 1q}=\|w\|_{\ell_{s}}^{\frac 1q}$.

Finally, suppose that $n<\frac pq$ and $w\not\in \ell_{s}$. Then, there exists $(b_j)_j\in\ell_{\frac p{nq}}$ with $b_1\ge b_2\ge b_3\ge \cdots\ge 0$ such that the series $\sum_{j=1}^\infty w_jb_j$ does not converge. Taking $\tilde x\in \ell_p$, $\tilde x= (b_j^{\frac 1{nq}})_j$ we have that $P(\tilde x)=(b_j^{\frac 1{q}})_j\not\in d(w,q)$. Now, the proof is complete.
\end{proof}

\begin{proposition}
$\mathcal{P}_w(^n\ell_p,d(w, q))=\mathcal{P}(^n\ell_p,d(w, q))$ if and only if $n<\frac{p}{q}$ and $w\not\in \ell_{s}$, with  $s=(\frac{p}{nq})^*$.
\end{proposition}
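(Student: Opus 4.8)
The plan is to recognize the stated equivalence as saying that $\mathcal{P}_w(^n\ell_p,d(w,q))=\mathcal{P}(^n\ell_p,d(w,q))$ holds precisely when the polynomial $P(x)=(x_j^n)_j$ \emph{fails} to define a bounded $n$-homogeneous map from $\ell_p$ into $d(w,q)$: by Proposition~\ref{ej_ellp_lorentz} this failure is exactly the condition $n<\frac pq$ together with $w\notin\ell_s$, $s=(\frac p{nq})^*$. Throughout I use that $\ell_p$ ($1<p<\infty$) is reflexive with a $1$-unconditional, shrinking basis $\{e_j\}_j$, which is therefore weakly null, and that the canonical basis $\{f_j\}_j$ of $d(w,q)$ consists of norm-one vectors.

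For the implication from $\mathcal{P}_w=\mathcal{P}$ to the condition $n<\frac pq$ and $w\notin\ell_s$, I would argue by contraposition: if $n\ge\frac pq$, or if $n<\frac pq$ and $w\in\ell_s$, then Proposition~\ref{ej_ellp_lorentz} gives that $P(x)=(x_j^n)_j$ is a well-defined element of $\mathcal{P}(^n\ell_p,d(w,q))$, while $P(e_j)=f_j\not\to 0$ although $e_j\to 0$ weakly, so $P$ is not weakly continuous on bounded sets at $0$; hence $\mathcal{P}_w(^n\ell_p,d(w,q))\subsetneqq\mathcal{P}(^n\ell_p,d(w,q))$. This half is immediate.

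For the converse, assume $n<\frac pq$ and $w\notin\ell_s$. Since $q>1$ one has $n<\frac pq<p\le cd(\ell_p)$, so $n<cd(\ell_p)$, and I would apply Proposition~\ref{oja general} with $E=\ell_p$, $F=d(w,q)$ and $\{e_j\}_j$, $\{f_j\}_j$ the canonical bases. Its three hypotheses have to be checked: the two concerning $\ell_p$ are standard (a semi-normalized weakly null sequence in $\ell_p$ has a subsequence that is a small perturbation of a block basic sequence, normalized block bases of $\ell_p$ being isometrically equivalent to the unit vector basis, and the spreading maps $e_l\mapsto e_{j_l}$ are isometries), while the hypothesis on $d(w,q)$ — that a semi-normalized weakly null sequence admits a subsequence which a bounded operator carries onto $\{f_j\}_j$ — is exactly the point established by E.~Oja in \cite[Proposition~1]{Oja-91}. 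Because $w\notin\ell_s$, Proposition~\ref{ej_ellp_lorentz}(b) together with Remark~\ref{Pek=fk} shows that no $P\in\mathcal{P}(^n\ell_p,d(w,q))$ satisfies $P(e_j)=f_j$ for all $j$, so Proposition~\ref{oja general} yields $\mathcal{P}(^n\ell_p,d(w,q))=\mathcal{P}_{wsc0}(^n\ell_p,d(w,q))$. Since $\ell_p$ has an unconditional, shrinking FDD, Proposition~\ref{todos son wsc0} upgrades this to $\mathcal{P}(^n\ell_p,d(w,q))=\mathcal{P}_{wsc}(^n\ell_p,d(w,q))$, and as $\ell_p$ contains no copy of $\ell_1$ the latter space equals $\mathcal{P}_w(^n\ell_p,d(w,q))$, finishing the argument.

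The step I expect to be the real obstacle is the verification of the $d(w,q)$-hypothesis of Proposition~\ref{oja general}: producing, for a given semi-normalized weakly null sequence in $d(w,q)$, a subsequence and a bounded operator on $d(w,q)$ mapping that subsequence onto the canonical basis. This rests on the symmetric-basis structure of $d(w,q)$ and on uniform domination of $\{f_j\}_j$ by normalized block bases; it is precisely Oja's technical contribution in the linear setting, and once it is granted everything else is bookkeeping with Propositions~\ref{ej_ellp_lorentz}, \ref{oja general} and \ref{todos son wsc0}.
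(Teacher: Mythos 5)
Your proposal is correct and follows essentially the same route as the paper: the ``only if'' direction via Proposition~\ref{ej_ellp_lorentz} and the weak nullity of the basis, and the converse by verifying the hypotheses of Proposition~\ref{oja general} (with the $d(w,q)$ condition delegated to Oja's work on Lorentz sequence spaces), invoking Remark~\ref{Pek=fk} and Proposition~\ref{ej_ellp_lorentz} to rule out a polynomial carrying $e_j$ to $f_j$, and then passing from $\mathcal{P}_{wsc0}$ to $\mathcal{P}_{wsc}=\mathcal{P}_w$ via Proposition~\ref{todos son wsc0} and \cite{AHV}. The only (immaterial) difference is the precise pointer into \cite{Oja-91} for the verification of the hypotheses on $d(w,q)$.
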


\begin{proof}
 By the previous proposition, whenever $n\ge \frac{p}{q}$ or
$n<\frac{p}{q}$ and $w\in \ell_{s}$, $s=(\frac{p}{nq})^*$, the polynomial  $P(x)=(x_j^n)_j$ belongs to $\mathcal{P}(^n\ell_p,d(w, q))$ and fails to be weakly continuous on bounded sets.

For the converse, we have that $E=\ell_p$ and $F=d(w,q)$ satisfy the three conditions of Proposition~\ref{oja general}, with $(e_j)_j$ and $(f_j)_j$ the respective canonical basis of $\ell_p$ and $d(w, q)$, see \cite[Corollary 2]{Oja-91}. Also, by Remark~\ref{Pek=fk} and Proposition~\ref{ej_ellp_lorentz}, we have that it does not exist $P\in \mathcal{P}(^n\ell_p,d(w, q))$ such that $P(e_j)=f_j$, for every $j$. Finally, as $n<\frac pq\le p\le cd(\ell_p)$, all the hypothesis of Proposition~\ref{oja general} are fulfilled. Therefore, $\mathcal{P}(^n\ell_p,d(w, q))=\mathcal{P}_{wsc0}(^n\ell_p,d(w, q))$. Now, by Proposition~\ref{todos son wsc0},
$\mathcal{P}(^n\ell_p,d(w, q))=\mathcal{P}_{wsc}(^n\ell_p,d(w, q))$ and
the result follows from \cite{AHV}, since weakly sequentially continuous polynomials and weakly continuous polynomials on bounded sets coincide on $\ell_p$.
\end{proof}

Let  $n=cd(\ell_p,d(w, q))$. Taking into account that for every  $k<n$, any polynomial in $\mathcal P(^k\ell_p,d(w, q))$ is weakly continuous on bounded sets, from the last proposition we derive that there are two possible values for $n$:

\begin{enumerate}
\item[(I)] $\frac{p}{q}\le n <\frac{p}{q}+1$ and $w\not\in \ell_{\left(\frac{p}{(n-1)q}\right)^*}$, or
\item[(II)] $n<\frac{p}{q}$ and $w\in \ell_{\left(\frac{p}{nq}\right)^*}\setminus \ell_{\left(\frac{p}{(n-1)q}\right)^*}$.
\end{enumerate}
\smallskip

\begin{theorem}
Let $n=cd(\ell_p,d(w, q))$.
\begin{enumerate}
\item[(a)] If $n$ and $w$ satisfy condition {\rm(I)} above, then
\begin{itemize}
\item $\mathcal{P}_w(^n\ell_p,d(w, q))$ is an $M$-ideal in $\mathcal{P}(^n\ell_p,d(w, q))$, and
\item $\mathcal{P}_w(^k\ell_p,d(w, q))$ is not a semi $M$-ideal in $\mathcal{P}(^k\ell_p,d(w, q))$, for all $k>n$.
\end{itemize}
\item[(b)] If $n$ and $w$ satisfy condition {\rm(II)} above, then $\mathcal{P}_w(^k\ell_p,d(w, q))$ is not a semi $M$-ideal in $\mathcal{P}(^k\ell_p,d(w, q))$, for all $k\ge n$.
\end{enumerate}
\end{theorem}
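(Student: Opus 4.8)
The plan is to split the statement into the $M$-ideal assertion in (a), which I would derive from the sufficient conditions of Section~2, and the three ``not a semi $M$-ideal'' assertions in (a) and (b), which I would all derive from the obstruction in Proposition~\ref{semi M-ideal}. Throughout I use that $F=d(w,q)$, $1<q<\infty$, carries the canonical symmetric $1$-unconditional basis $\{f_j\}_j$ with coordinate projections $\{\pi_m\}_m$, and that --- since $w$ is non-increasing with $w_1=1$ --- this basis satisfies the upper $q$-estimate with constant $1$: $\|y_1+y_2\|^q\le\|y_1\|^q+\|y_2\|^q$ whenever $y_1,y_2\in d(w,q)$ have disjoint supports (compare the decreasing rearrangement of $y_1+y_2$ with those of $y_1$ and $y_2$ rank by rank). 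This is the only estimate on the $F$-side that will be needed.

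For the $M$-ideal in (a): since $cd(\ell_p,d(w,q))\le cd(\ell_p)$ we have $n\le cd(\ell_p)$, and I split into two cases. If $n=cd(\ell_p)$ then $n\ge p$, so the canonical projections $\{\pi_m\}_m$ of $\ell_p$ satisfy $\|\pi_m x\|^n+\|x-\pi_m x\|^n\le(\|\pi_m x\|^p+\|x-\pi_m x\|^p)^{n/p}=\|x\|^n$, and since the basis of $\ell_p$ is shrinking, Corollary~\ref{fdd} applies. If $n<cd(\ell_p)$, I apply Theorem~\ref{largo} with $K_\alpha$ the coordinate projections of $\ell_p$ (so $\|K_\alpha x\|^p+\|x-K_\alpha x\|^p=\|x\|^p$) and $L_\beta$ the coordinate projections of $d(w,q)$ (the upper $q$-estimate above, together with $\|\pi_m\|,\|Id-\pi_m\|\le1$, gives the required third inequality with $\varepsilon=0$); condition {\rm(I)} supplies $p\le nq$ and the case hypothesis supplies $n<cd(\ell_p)$, so Theorem~\ref{largo} yields the $M$-ideal.

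For the negative parts I apply Proposition~\ref{semi M-ideal} (valid since the canonical basis of $d(w,q)$ is $1$-unconditional), always taking $Q(x)=\|P\|\,e_1^*(x)^k f_1$ --- a finite type polynomial with $\|Q\|=\|P\|$ --- and testing $\|\pi^mP+Q\|$ on a unit vector of $\ell_p$ supported on $\{1\}\cup\{m+2,m+3,\dots\}$, so that $Q$ lands on $f_1$ while $\pi^mP$ lands on coordinates $>m$, disjointly. In case {\rm(I)} with $k>n$: from {\rm(I)} one has $k-1\ge n\ge p/q$, so by Proposition~\ref{ej_ellp_lorentz}(a) the map $x\mapsto(x_j^{k-1})_j$ from $\ell_p$ to $d(w,q)$ has norm $1$; taking $P(x)=e_1^*(x)\,(x_j^{k-1})_{j\ge2}$, an elementary optimisation gives $\|P\|=\bigl[\tfrac1k(1-\tfrac1k)^{k-1}\bigr]^{1/p}$, and testing at $(\tfrac1k)^{1/p}e_1+(1-\tfrac1k)^{1/p}e_{m+2}$ one gets $(\pi^mP+Q)\bigl((\tfrac1k)^{1/p}e_1+(1-\tfrac1k)^{1/p}e_{m+2}\bigr)=\|P\|\bigl(f_{m+1}+(\tfrac1k)^{k/p}f_1\bigr)$, whence $\|\pi^mP+Q\|\ge\|P\|\bigl(1+w_2(\tfrac1k)^{kq/p}\bigr)^{1/q}=:\delta>\|P\|$ for all $m\ge1$. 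In case {\rm(II)} with $k\ge n$: here $w\in\ell_s$ with $s=(\tfrac{p}{nq})^*$, so by Proposition~\ref{ej_ellp_lorentz}(b) the map $x\mapsto(x_j^n)_j$ from $\ell_p$ to $d(w,q)$ is bounded with norm $\|w\|_s^{1/q}$, attained at $\tilde y=\|w\|_s^{-s/p}(w_j^{s/p})_j$; taking $P(x)=e_1^*(x)^{\,k-n}\,(x_j^n)_{j\ge2}$ gives $\|P\|=C\|w\|_s^{1/q}$ with $C=\max\{a^{k-n}b^n:a^p+b^p=1,\ a,b\ge0\}$ ($C=1$ when $k=n$). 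Testing at $\tilde x_m=t^{1/p}e_1+(1-t)^{1/p}\sum_{l\ge1}\tilde y_l\,e_{m+1+l}\in S_{\ell_p}$ for $t\in(0,1)$, the value $(\pi^mP+Q)(\tilde x_m)$ equals $\|P\|t^{k/p}$ at position $1$ and $t^{(k-n)/p}(1-t)^{n/p}\tilde y_l^{\,n}$ (decreasing in $l$) at positions $m+1,m+2,\dots$, so its $d(w,q)$-norm is at least the value obtained by weighting $\|P\|t^{k/p}$ by $w_1$ and the tail values by $w_2,w_3,\dots$; this yields
$$\|\pi^mP+Q\|^q\ \ge\ \|P\|^q\,t^{kq/p}\;+\;t^{(k-n)q/p}(1-t)^{nq/p}\,\|w\|_s^{-(s-1)}\!\sum_{l\ge1}w_{l+1}w_l^{\,s-1}\ =:\ g(t),$$
where I use $\tilde y_l^{\,nq}=\|w\|_s^{-(s-1)}w_l^{\,s-1}$, i.e.\ $snq/p=s-1$. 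Since $g(1)=\|P\|^q$ and, crucially, $nq/p<1$ in case {\rm(II)}, one has $g'(t)\to-\infty$ as $t\to1^-$; hence $g(t_0)>\|P\|^q$ for some $t_0<1$, and taking $\delta:=g(t_0)^{1/q}>\|P\|$ (independent of $m$) completes the application of Proposition~\ref{semi M-ideal}.

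I expect the last estimate --- producing $\delta>\|P\|$ in case {\rm(II)} --- to be the main obstacle: the direction along which $P$ nearly attains its norm spreads the mass of $\tilde y$ over infinitely many coordinates, whereas $Q$ wants that mass on $e_1$, and the way out (testing near $e_1$ and exploiting the failure of concavity of $t\mapsto t^{(k-n)q/p}(1-t)^{nq/p}$ at $t=1$, which is precisely caused by $nq<p$) is what forces hypothesis {\rm(II)} to enter essentially. A smaller point to get right is the upper $q$-estimate for $d(w,q)$ and checking that the canonical projections of $d(w,q)$ meet the hypotheses of Theorem~\ref{largo}.
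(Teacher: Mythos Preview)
Your argument is correct. For the $M$-ideal assertion in (a) and for the negative assertion in (a) with $k>n$, your proof follows the paper's exactly: you invoke Theorem~\ref{largo} (or Corollary~\ref{fdd} when $n=cd(\ell_p)$) for the positive part --- with the extra care of checking the upper $q$-estimate that makes $d(w,q)$ fit the third hypothesis of Theorem~\ref{largo} --- and for the negative part you use the same $P(x)=e_1^*(x)(x_j^{k-1})_{j\ge2}$, the same $Q$, the same test vector, and the same $\delta=\|P\|\bigl(1+w_2(\tfrac1k)^{kq/p}\bigr)^{1/q}$.

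In case (b) your route diverges from the paper's. The paper takes $P(x)=x^*(x)^{k-n}R(x)$ with the spread-out functional $x^*=(w_j^{s/p^*})_j$, chosen so that $P$ attains its norm at the same point $\tilde x=\|w\|_s^{-s/p}(w_j^{s/p})_j$ as $R$, and then evaluates $\pi^mP+Q$ at this \emph{fixed} $\tilde x$ for every $m$. You instead keep the template of part (a), taking $P(x)=e_1^*(x)^{k-n}(x_j^n)_{j\ge2}$ and an $m$-dependent test vector that shifts the heavy tail of $\tilde y$ past coordinate $m+1$, and then optimise over the one-parameter family $t\mapsto g(t)$ using the derivative blow-up forced by $nq/p<1$. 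What your approach buys is that the resulting $\delta=g(t_0)^{1/q}$ is manifestly independent of $m$, exactly as Proposition~\ref{semi M-ideal} requires. By contrast, the paper's displayed inequality
\[
\|\pi^mP+Q\|\ \ge\ W^r\Bigl[1+W^{-1}\sum_{j\ge2}w_j|\tilde x_{m-1+j}|^{nq}\Bigr]^{1/q},
\]
though strictly greater than $\|P\|=W^r$ for each $m$, has its margin tending to $0$ as $m\to\infty$; your argument is therefore more complete on this point.
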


\begin{proof}
Suppose $n$ and  $w$ satisfy condition (I) above. Then, $n=cd(\ell_p,d(w, q))\ge \frac pq$ and $cd(\ell_p)$ is the integer number satisfying $p\le cd(\ell_p) < p+1$. If $n< cd(\ell_p)$, the hypothesis of Theorem~\ref{largo} are fulfilled. If $n=cd(\ell_p)$ we may apply Proposition~\ref{condition}. In both cases the conclusion follows.

Now, take $k>cd(\ell_p,d(w, q))$. According to Proposition~\ref{semi M-ideal}, the result is proven if we find polynomials $P\in \mathcal{P}(^k\ell_p,d(w, q))$ and $Q\in \mathcal{P}_w(^k\ell_p,d(w, q))$ such that there exists $\delta>0$ with $\|P\|=\|Q\|$ and $\|\pi^m P + Q\|\ge \delta >\|P\|$, for all $m$.

By Proposition~\ref{ej_ellp_lorentz}, as $k-1\ge \frac pq$, the mapping $R(x)=(x_{j}^{k-1})_{j\ge 2}$ is a well defined norm one polynomial. Then, $P(x)= e_1^*(x)R(x)$ belongs to  $\mathcal{P}(^k\ell_p,d(w, q))$. In order to compute its norm, take $x$ so that $\|x\|_{\ell_p}=1$,
$$\textstyle
\|P(x)\|_{d(w, q)}=|x_1|\|R(x)\|_{d(w, q)}\le |x_1|\|(x_j)_{j\ge 2}\|^{k-1}_{\ell_p}\le \Big(\frac 1k\Big)^{\frac 1p}\Big(1-\frac 1k\Big)^{\frac{k-1}p},
$$
where the last inequality was shown in the proof of Theorem~\ref{lp-lq:casos}. Now, with $\tilde x= (\frac 1k)^{\frac 1p}e_1+ (1-\frac 1k)^{\frac 1p}e_2\in S_{\ell_p}$ we have that  $P(\tilde x)=(\frac 1k)^{\frac 1p}(1-\frac 1k)^{\frac {k-1}p}e_1$, whence $\|P\|= (\frac 1k)^{\frac 1p}(1-\frac 1k)^{\frac{k-1}p}$.

Let $Q$ be the weakly continuous on bounded sets polynomial given by $Q=\|P\|(e_1^*)^k\cdot e_1$. Then, $\|Q\|=\|P\|$ and  $\tilde x=(\frac 1k)^{\frac 1p}e_1 + (1-\frac 1k)^{\frac 1p}e_{m+2}$, for $m\ge 1$, is a norm one vector so that
$$
\|(\pi^m P + Q)(\tilde x)\|_{d(w,q)}= \|P\| \Big\| e_{m+1} + (\textstyle{\frac 1k})^{\frac kp}e_1 \Big\|_{d(w,q)}=\|P\|\Big(1 + w_2(\frac 1k)^{\frac{kq}p}\Big)^{\frac 1q} > \|P\|,
$$
which completes the proof of (i).

To prove (ii), take $n$ and $w$ satisfying condition (II)  and take $k\ge n$. Let us denote $s=(\frac{p}{nq})^*=\frac p{p-nq}$ and $W=\|w\|_{\ell_{s}}$. By Proposition \ref{ej_ellp_lorentz} (b), the $n$-homogeneous polynomial $R(x)=(x_j^n)_j$ satisfies
$$
\|R(\tilde x)\|_{d(w,q)}=\|R\|= W^{\frac 1q},\qquad\textrm{where }\tilde x=W^{-\frac sp}(w_j^\frac sp)_j.
$$
Observe that $x^*=(w_j^\frac s{p^*})_j$ belongs to $\ell_{p^*}$ and, as a continuous functional, it also attains its norm at $\tilde x$:
$$
x^*(\tilde x)=\|x^*\|=W^{\frac s {p^*}}.
$$
Now we are ready to construct two polynomials $P$ and $Q$ fulfilling the statement of Proposition~\ref{semi M-ideal}. Let $P\in \mathcal{P}(^k\ell_p,d(w, q))$  and $Q\in \mathcal{P}_w(^k\ell_p,d(w, q))$ be given by
$$
P(x)=x^*(x)^{k-n}R(x)\qquad\textrm{and }\qquad Q(x)=W^{\frac 1q -\frac{sn}{p^*}}x^*(x)^k e_1.
$$
It is easy to see that
$$
\|P(\tilde x)\|_{d(w,q)}=\|P\|=W^r=\|Q(\tilde x)\|_{d(w,q)}=\|Q\|,\qquad\textrm{where }r=\frac{s(k-n)}{p^*}+\frac 1q.
$$
Finally,
\begin{eqnarray*}
\|\pi^m P + Q\| & \ge & \|(\pi^m P + Q)(\tilde x)\|_{d(w,q)}=\left\|W^r e_1 + W^{\frac{s(k-n)}{p^*}}\sum_{j=m+1}^\infty \tilde x_j^n e_j\right\|_{d(w,q)} \\
& = & W^{r}\left[1 + W^{-1} \sum_{j=2}^\infty w_j |\tilde x_{m-1+j}|^{nq}\right]^{\frac 1q} > W^r=\|P\|.
\end{eqnarray*}
This completes the proof of the theorem.
\end{proof}

\section{Polynomial property $(M)$}

Property $(M)$ was introduced by Kalton in \cite{K}. It is a geometric property relating the norm of the traslation by a weakly null net of any two elements of the space. Namely, a Banach space $X$ has property $(M)$ if for any $x, \tilde x\in X$ such that $\|x\|\le \|\tilde x\|$, and any bounded weakly null net $(x_\alpha)_\alpha$ in $X$, it holds that $\limsup \|x + x_\alpha\|\le \limsup \|\tilde x + x_\alpha\|$. An operator version of this property was given in  \cite{KW}. Later on, in \cite{Dim}, it is extended to the scalar-valued polynomial context. In all these cases, these properties have incidence in the correspondent $M$-ideal problems. To study $M$-structures in spaces of vector-valued polynomials, we consider a suitable property $(M)$, which is the result of a natural combination of the definitions given for  operators and scalar-valued polynomials. Before going on, let us state the vector-valued versions of \cite[Lemma 3.1 and Theorem 3.2]{Dim}.

\begin{lemma}
If $\mathcal{P}_w(^nE,F)$ is an $M$-ideal in $\mathcal{P}(^nE,F)$ then,
for each $P\in \mathcal{P}(^nE,F)$ there exists a bounded net
$\{P_\alpha\}_\alpha\subset \mathcal{P}_w(^nE,F)$ such that
${P}_\alpha(x)\to {P}(x)$, for all $x\in E$.
\end{lemma}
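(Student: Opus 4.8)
The plan is to adapt, to the polynomial setting, the argument used for compact operators in \cite[Section~VI.4]{HWW}: one exploits the $\ell_1$-decomposition of the dual coming from the $M$-ideal hypothesis together with Goldstine's theorem, and then passes to convex combinations. Write $X=\mathcal{P}(^nE,F)$ and $J=\mathcal{P}_w(^nE,F)$, and use the $M$-ideal property to split $X^{*}=J^{\perp}\oplus_{1}N$, where $N$ is the distinguished isometric copy of $J^{*}$ sitting inside $X^{*}$ (the subspace whose elements are the canonical norm-preserving extensions of the functionals on $J$).

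The crucial point is that every evaluation functional $e_{x}\otimes y^{*}$, with $x\in E$ and $y^{*}\in F^{*}$, lies in $N$. Indeed, since $\mathcal{P}_{f}(^nE,F)\subset J$, the observation preceding Proposition~\ref{extremales} together with homogeneity gives $\|(e_{x}\otimes y^{*})|_{J}\|=\|x\|^{n}\|y^{*}\|=\|e_{x}\otimes y^{*}\|_{X^{*}}$; thus $e_{x}\otimes y^{*}$ is a norm-preserving extension to $X$ of its own restriction to $J$, and the $\ell_{1}$-splitting forces such an extension to be unique and to belong to $N$: if $\phi=a+b$ with $a\in J^{\perp}$, $b\in N$ and $\|\phi\|=\|\phi|_{J}\|$, then $\|\phi\|=\|a\|+\|b\|\geq\|b\|\geq\|b|_{J}\|=\|\phi|_{J}\|=\|\phi\|$, forcing $a=0$.

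Now let $\pi\colon X^{*}\to X^{*}$ be the contractive projection onto $N$ with kernel $J^{\perp}$; it is an $L$-projection, so $\pi^{*}$ is an $M$-projection on $X^{**}$, of norm at most one, with range $J^{\perp\perp}\cong J^{**}$ (and on $J^{\perp\perp}$ the topology $\sigma(X^{**},X^{*})$ coincides with $\sigma(J^{**},J^{*})$). Fix $P\in X$. Then $\pi^{*}(P)$ belongs to $\|P\|\,B_{J^{\perp\perp}}$, which by Goldstine's theorem is the $\sigma(X^{**},X^{*})$-closure of $\|P\|\,B_{J}$; hence there is a net $\{Q_{\beta}\}_{\beta}\subset\|P\|\,B_{J}$ with $Q_{\beta}\to\pi^{*}(P)$ in $\sigma(X^{**},X^{*})$. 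Testing against $e_{x}\otimes y^{*}\in N$ and using $\pi(e_{x}\otimes y^{*})=e_{x}\otimes y^{*}$,
$$
y^{*}\big(Q_{\beta}(x)\big)=(e_{x}\otimes y^{*})(Q_{\beta})\longrightarrow\langle\pi^{*}(P),\,e_{x}\otimes y^{*}\rangle=(e_{x}\otimes y^{*})(P)=y^{*}\big(P(x)\big),
$$
for all $x\in E$ and $y^{*}\in F^{*}$; that is, $Q_{\beta}(x)\to P(x)$ weakly in $F$, with $\|Q_{\beta}\|\leq\|P\|$.

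To obtain the asserted pointwise (norm) convergence I would then pass to convex combinations: index a new net by the pairs $\lambda=(\mathcal{F},\varepsilon)$, with $\mathcal{F}\subset E$ finite and $\varepsilon>0$, ordered by $\mathcal{F}\subseteq\mathcal{F}'$, $\varepsilon'\leq\varepsilon$. For each $\lambda$ the point $\big(P(x)\big)_{x\in\mathcal{F}}$ is a weak limit, hence by Mazur's theorem it lies in the norm-closed convex hull, of $\big\{\big(Q_{\beta}(x)\big)_{x\in\mathcal{F}}:\beta\big\}$ in $F^{\,|\mathcal{F}|}$, so some convex combination $P_{\lambda}=\sum_{k}t_{k}Q_{\beta_{k}}\in\|P\|\,B_{J}$ satisfies $\|P_{\lambda}(x)-P(x)\|<\varepsilon$ for all $x\in\mathcal{F}$; the net $\{P_{\lambda}\}_{\lambda}$ is bounded by $\|P\|$ and converges pointwise to $P$. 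The step I expect to be the real obstacle --- and the only place where the $M$-ideal hypothesis is used --- is identifying the evaluation functionals as elements of the distinguished summand $N$: this rests on the uniqueness of norm-preserving extensions from $J$ to $X$, which the $\ell_{1}$-decomposition of $X^{*}$ provides but which fails for semi $M$-ideals. Without it, $\pi^{*}(P)$ would only record the $J^{\perp\perp}$-component of $P$ and the evaluations would not recover $P$ itself.
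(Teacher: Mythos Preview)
Your proof is correct and follows essentially the same route as the paper's. The paper invokes \cite[Remark~I.1.13]{HWW} to obtain a bounded net $\{Q_\alpha\}\subset B_J$ with $Q_\alpha\to P$ in $\sigma(X,J^*)$, which is precisely what you re-derive by hand via the $L$-decomposition, the identification $e_x\otimes y^*\in N$, and Goldstine; the passage to pointwise norm convergence by convex combinations is the same idea in both proofs (the paper phrases it as ``WPT and SPT have the same continuous functionals, hence the same closed convex sets'', while you run Mazur directly on finite products $F^{|\mathcal F|}$).
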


\begin{proof}
 Fix $P\in \mathcal{P}(^nE,F)$. By \cite[Remark I.1.13]{HWW}, we may consider $\{Q_\alpha\}_\alpha$ a bounded net in $\mathcal{P}_w(^nE, F)$ such that $Q_\alpha \to P$ in the topology $\sigma\Big(\mathcal{P}(^nE, F),\mathcal{P}_w(^nE, F)^*\Big)$.

Since $e_x\otimes y^*$ belongs to $\mathcal{P}_w(^nE, F)^*$,
$y^*(Q_\alpha (x))=\langle e_x\otimes y^*, Q_\alpha \rangle \to \langle e_x\otimes y^*, P \rangle=y^*(P(x))$, for all $x\in E$ and all $y^*\in F^*$. This says that $Q_\alpha (x)\overset{w}{\rightarrow} P(x)$, for all $x\in E$, which can be described, in analogy to the operator setting, as $Q_\alpha \to P$ in the {\it WPT}, the ``weak polynomial topology''.

We can also consider on $\mathcal{P}(^nE,F)$ the ``strong polynomial topology'', {\it SPT}, naturally meaning pointwise convergence of nets. Both topologies, the {\it WPT} and the {\it SPT}, are locally convex and
have the same continuous functionals (the proof of \cite[Theorem VI.1.4]{DS1} works also for polynomials). Thus, as in the linear case, we derive that the closure of any convex set in the strong polynomial topology coincides with its closure in the weak polynomial topology.

Then, we may find $P_\alpha$, a convex combination of $Q_\alpha$, converging pointwise to $P$.
\end{proof}

As a consequence of \cite[Proposition 2.3]{W} and the previous
lemma, we have the following result which can be proved analogously
to \cite[Theorem 3.1]{W}:

\begin{theorem}\label{equivalencias}
Let $E$ and $F$ be Banach spaces. The following are equivalent:
\begin{enumerate}
\item[(i)] $\mathcal{P}_w(^nE,F)$ is an
$M$-ideal in $\mathcal{P}(^nE,F)$.
\item[(ii)] For all $P\in \mathcal{P}(^nE,F)$ there exists a net
$\{P_\alpha\}_\alpha\subset \mathcal{P}_w(^nE,F)$ such that
$P_\alpha(x)\to P(x)$, for all $x\in E$
and
$$
\limsup \|Q+P-P_\alpha\|\leq \max\{\|Q\|, \|Q\|_{es} + \|P\|\},\quad
\textrm{for all } Q\in\mathcal{P}(^nE,F).
$$
\item[(iii)] For all $P\in \mathcal{P}(^nE,F)$ there exists a net
$\{P_\alpha\}_\alpha\subset \mathcal{P}_w(^nE,F)$ such that
$P_\alpha(x)\to P(x)$, for all $x\in E$
and
$$
\limsup \|Q+P-P_\alpha\|\leq \max\{\|Q\|, \|P\|\},\quad
\textrm{for all } Q\in\mathcal{P}_w(^nE,F).
$$
\end{enumerate}
\end{theorem}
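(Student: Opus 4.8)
\medskip

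The plan is to run through the cycle $(i)\Rightarrow(ii)\Rightarrow(iii)\Rightarrow(i)$, imitating the proof of \cite[Theorem 3.1]{W} with bounded operators replaced by $n$-homogeneous polynomials and the strong operator topology replaced by the \emph{SPT} of the previous lemma. The two external ingredients are \cite[Proposition 2.3]{W}, applied to the pair $\mathcal{P}_w(^nE,F)\subset\mathcal{P}(^nE,F)$, and the previous lemma, which allows us to turn nets converging in the \emph{WPT} into nets converging pointwise on $E$.

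The implication $(ii)\Rightarrow(iii)$ requires no work: if $Q\in\mathcal{P}_w(^nE,F)$ then $\|Q\|_{es}=d(Q,\mathcal{P}_w(^nE,F))=0$, so $\max\{\|Q\|,\|Q\|_{es}+\|P\|\}=\max\{\|Q\|,\|P\|\}$ and the net furnished by $(ii)$ is already a net as required in $(iii)$. For $(iii)\Rightarrow(i)$ I would check the restricted 3-ball property of Theorem A$(iii)$ directly. Given $P_1,P_2,P_3\in B_{\mathcal{P}_w(^nE,F)}$, $Q\in B_{\mathcal{P}(^nE,F)}$ and $\varepsilon>0$, apply $(iii)$ with $Q$ in the role of $P$ to obtain a net $\{Q_\alpha\}_\alpha\subset\mathcal{P}_w(^nE,F)$ with $Q_\alpha(x)\to Q(x)$ for all $x\in E$ and
$$
\limsup_\alpha\|R+Q-Q_\alpha\|\le\max\{\|R\|,\|Q\|\}\le 1\qquad\textrm{for every }R\in\mathcal{P}_w(^nE,F).
$$
Putting $R=P_j$, $j=1,2,3$, and using that only finitely many inequalities are involved, we may choose $\alpha$ with $\|Q+P_j-Q_\alpha\|\le 1+\varepsilon$ for $j=1,2,3$; since $Q_\alpha\in\mathcal{P}_w(^nE,F)$, this is exactly Theorem A$(iii)$, and hence $\mathcal{P}_w(^nE,F)$ is an $M$-ideal in $\mathcal{P}(^nE,F)$.

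The content lies in $(i)\Rightarrow(ii)$. Fix $P\in\mathcal{P}(^nE,F)$. Since $\mathcal{P}_w(^nE,F)$ is an $M$-ideal in $\mathcal{P}(^nE,F)$, \cite[Proposition 2.3]{W} provides a bounded net $\{Q_\alpha\}_\alpha\subset\mathcal{P}_w(^nE,F)$ converging to $P$ in the \emph{WPT} and such that
$$
\limsup_\alpha\|Q+P-Q_\alpha\|\le\max\{\|Q\|,\|Q\|_{es}+\|P\|\}\qquad\textrm{for all }Q\in\mathcal{P}(^nE,F).
$$
As in the proof of the previous lemma, the coincidence of the \emph{SPT}-closure and the \emph{WPT}-closure of convex sets lets us replace $\{Q_\alpha\}_\alpha$ by a net $\{P_\alpha\}_\alpha$ of convex combinations of ``tails'' of $\{Q_\alpha\}_\alpha$ that converges to $P$ pointwise on $E$; and since $Q+P-P_\alpha=\sum_i\lambda_i(Q+P-Q_{\beta_i})$ is a convex combination with all $\beta_i$ arbitrarily far out, $\|Q+P-P_\alpha\|$ is bounded by the supremum of $\|Q+P-Q_\beta\|$ over those $\beta$, so the displayed estimate is inherited by $\{P_\alpha\}_\alpha$. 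This yields $(ii)$. The delicate point is precisely this last step: one must arrange the convex combinations so that pointwise convergence is gained without spoiling the quantitative estimate coming from the $M$-ideal hypothesis, which is why \cite[Proposition 2.3]{W}, rather than the plain approximation statement of the previous lemma, must be the starting point.
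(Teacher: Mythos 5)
Your proof is correct and follows exactly the route the paper intends: the paper omits the argument, citing \cite[Proposition 2.3]{W}, the preceding lemma, and the proof of \cite[Theorem 3.1]{W}, and your write-up is precisely that adaptation (Werner's basic inequality plus the Mazur-type passage from \emph{WPT} to \emph{SPT} via tail convex combinations, which preserves the $\limsup$ estimate). No gaps.
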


Now we state the property $(M)$ for a vector-valued polynomial.

\begin{definition}
Let $P\in\mathcal{P}(^nE,F)$ with $\|P\|\leq 1$. We say that $P$ has
{\bf property $(M)$} if for all  $u\in E$, $v\in F$
with $\|v\|\leq\|u\|^n$ and for every bounded weakly null net
$\{x_\alpha\}_\alpha\subset E$, it holds that
$$
\limsup_\alpha \|v+P(x_\alpha)\|\leq \limsup_\alpha
\|u+x_\alpha\|^n.
$$
\end{definition}

Note that every $P\in \mathcal{P}_w(^nE,F)$ with $\|P\|\le 1$ has property $(M)$.
Analogously to \cite[Lemma 6.2]{KW}, we can prove:

\begin{lemma}\label{redes}
Let $P\in\mathcal{P}(^nE,F)$ with $\|P\|\leq 1$. If $P$ has property
$(M)$ then for all nets $\{u_\alpha\}_\alpha$ and $\{v_\alpha\}_\alpha$ contained in
compact sets of $E$ and $F$ respectively,
with $\|v_\alpha\|\leq\|u_\alpha\|^n$ and for every bounded
weakly null net $\{x_\alpha\}_\alpha\subset E$, it holds that
$$
\limsup_\alpha \|v_\alpha+P(x_\alpha)\|\leq \limsup_\alpha
\|u_\alpha+x_\alpha\|^n.
$$
\end{lemma}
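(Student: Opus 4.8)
The plan is to upgrade the single-vector statement in the definition of property $(M)$ to a net-version, mimicking the operator argument of \cite[Lemma 6.2]{KW}. The key observation is that since $\{u_\alpha\}_\alpha$ and $\{v_\alpha\}_\alpha$ live in compact sets, we may pass to subnets that converge in norm, and a relatively compact net is ``uniformly approximable'' by a finite set of its limit points; this is exactly what lets us reduce the problem to the single-vector hypothesis.

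First I would argue by contradiction: suppose the conclusion fails for some nets $\{u_\alpha\}_\alpha$, $\{v_\alpha\}_\alpha$ (in compact sets $K_E\subset E$ and $K_F\subset F$, with $\|v_\alpha\|\le\|u_\alpha\|^n$) and some bounded weakly null net $\{x_\alpha\}_\alpha$. Passing to a subnet, I may assume $\limsup_\alpha\|v_\alpha+P(x_\alpha)\|$ is actually a limit, that $u_\alpha\to u$ in $E$ and $v_\alpha\to v$ in $F$ (using compactness of $K_E$, $K_F$), and that $\limsup_\alpha\|u_\alpha+x_\alpha\|^n$ is a limit along this subnet; note $\|v\|\le\|u\|^n$ by continuity of the norm. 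Then for $\varepsilon>0$, eventually $\|u_\alpha-u\|<\varepsilon$ and $\|v_\alpha-v\|<\varepsilon$, so
\begin{equation*}
\|v+P(x_\alpha)\| \le \|v_\alpha+P(x_\alpha)\| + \varepsilon
\quad\text{and}\quad
\|u+x_\alpha\| \le \|u_\alpha+x_\alpha\| + \varepsilon.
\end{equation*}
Taking $\limsup$ in $\alpha$ and invoking property $(M)$ of $P$ applied to the pair $(u,v)$ gives
\begin{equation*}
\limsup_\alpha\|v_\alpha+P(x_\alpha)\| - \varepsilon
\le \limsup_\alpha\|v+P(x_\alpha)\|
\le \limsup_\alpha\|u+x_\alpha\|^n
\le \limsup_\alpha(\|u_\alpha+x_\alpha\|+\varepsilon)^n.
\end{equation*}
Since $\{x_\alpha\}_\alpha$ is bounded, the right-hand side is $\limsup_\alpha\|u_\alpha+x_\alpha\|^n + O(\varepsilon)$; letting $\varepsilon\to 0$ contradicts the assumed strict inequality, and the lemma follows.

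The only mildly delicate point is the passage to subnets: one must be careful that $\limsup_\alpha\|v_\alpha+P(x_\alpha)\|$ does not decrease when restricting to a subnet, which is why I first pass to a subnet realizing this $\limsup$ as a genuine limit, and only afterwards extract further convergent subnets for $\{u_\alpha\}_\alpha$ and $\{v_\alpha\}_\alpha$; along such a subnet every $\limsup$ in sight becomes a limit and the inequalities chain cleanly. Everything else is the triangle inequality together with continuity of the norm and of $t\mapsto t^n$, exactly as in the linear case of Kalton and Werner.
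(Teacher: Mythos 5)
Your argument is correct and is essentially the intended one: the paper omits the proof, deferring to the compactness/subnet argument of Kalton--Werner's Lemma 6.2, which is exactly what you carry out (extract norm-convergent subnets $u_\alpha\to u$, $v_\alpha\to v$ from the compact sets, note $\|v\|\le\|u\|^n$ by continuity, apply the single-vector property $(M)$ along the subnet, and transfer back via the triangle inequality). Your care in first fixing a subnet that realizes the left-hand $\limsup$ as a limit, so that further subnets can only shrink the right-hand side, is precisely the point that makes the reduction legitimate.
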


\begin{definition}
We say that a pair of Banach spaces $(E,F)$ has the {\bf $n$-polynomial property
$(M)$} if every $P\in\mathcal{P}(^nE,F)$ with $\|P\|\leq 1$ has
property $(M)$.
\end{definition}

The next two results can be proved mimicking the proofs of  Proposition~3.7 and Theorem~3.9 of \cite{Dim}.

\begin{proposition}
If $\mathcal{P}_w(^nE,F)$ is an $M$-ideal in $\mathcal{P}(^nE,F)$ and $n=cd(E,F)$ then
$(E,F)$ has the $n$-polynomial property $(M)$.
\end{proposition}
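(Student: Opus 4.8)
The plan is to verify the definition of the $n$-polynomial property $(M)$ directly, feeding the $M$-ideal hypothesis into the pointwise-approximation reformulation of Theorem~\ref{equivalencias}(iii). So I would fix $P\in\mathcal{P}(^nE,F)$ with $\|P\|\le 1$, elements $u\in E$, $v\in F$ with $\|v\|\le\|u\|^n$, and a bounded weakly null net $\{x_\alpha\}_\alpha\subset E$, and aim at $\limsup_\alpha\|v+P(x_\alpha)\|\le\limsup_\alpha\|u+x_\alpha\|^n$. The case $u=0$ forces $v=0$ and is immediate from $\|P(x_\alpha)\|\le\|x_\alpha\|^n$. For $u\ne 0$ I would choose $x^*\in S_{E^*}$ with $x^*(u)=\|u\|$ and set $R=\big(\tfrac{x^*}{\|u\|}\big)^n\cdot v$; then $R\in\mathcal{P}_w(^nE,F)$ (a polynomial of the form $(x^*)^n\cdot y$), with $R(u)=v$ and $\|R\|=\|v\|/\|u\|^n\le 1$.

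Then I would apply Theorem~\ref{equivalencias}(iii) to $P$ to obtain a net $\{P_\beta\}_\beta\subset\mathcal{P}_w(^nE,F)$ with $P_\beta(x)\to P(x)$ for all $x$ and $\limsup_\beta\|R+P-P_\beta\|\le\max\{\|R\|,\|P\|\}\le 1$, and evaluate $R+P-P_\beta$ along $u+x_\alpha$. This is exactly the point where the hypothesis $n=cd(E,F)$ enters and where the argument departs from the linear case: expanding $P(u+x_\alpha)=P(u)+\sum_{k=1}^{n-1}\binom nk\,\overset\vee P(u^{n-k},x_\alpha^k)+P(x_\alpha)$, each map $w\mapsto\overset\vee P(u^{n-k},w^k)$ is a bounded $k$-homogeneous polynomial with $k<cd(E,F)$, hence belongs to $\mathcal{P}_w(^kE,F)=\mathcal{P}(^kE,F)$ by Remark~\ref{n-unico} and therefore tends to $0$ along $\{x_\alpha\}_\alpha$. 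Combining this with $R(x_\alpha)\to 0$, $P_\beta(x_\alpha)\to 0$ and $R(u+x_\alpha)\to v$, $P_\beta(u+x_\alpha)\to P_\beta(u)$ (weak continuity of $R$ and $P_\beta$ on bounded sets), one gets, for each fixed $\beta$,
$$
\limsup_\alpha\big\|(R+P-P_\beta)(u+x_\alpha)\big\|=\limsup_\alpha\big\|v+(P(u)-P_\beta(u))+P(x_\alpha)\big\|.
$$

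Finally I would insert the crude estimate $\|(R+P-P_\beta)(u+x_\alpha)\|\le\|R+P-P_\beta\|\,\|u+x_\alpha\|^n$: taking $\limsup_\alpha$ gives, for each $\beta$,
$$
\limsup_\alpha\big\|v+(P(u)-P_\beta(u))+P(x_\alpha)\big\|\le\|R+P-P_\beta\|\cdot\limsup_\alpha\|u+x_\alpha\|^n,
$$
and then letting $\beta$ run, using $P_\beta(u)\to P(u)$ in norm and $\limsup_\beta\|R+P-P_\beta\|\le 1$, one obtains $\limsup_\alpha\|v+P(x_\alpha)\|\le\limsup_\alpha\|u+x_\alpha\|^n$, which is the claim. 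The only routine bookkeeping is the interchange of the two $\limsup$'s, which is harmless since $P(u)-P_\beta(u)\to 0$ shifts the inner $\limsup$ by at most $\|P(u)-P_\beta(u)\|$. The genuine conceptual obstacle is the control of the cross terms $\overset\vee P(u^{n-k},x_\alpha^k)$ for $1\le k\le n-1$; the assumption $n=cd(E,F)$ is precisely what forces these lower-degree polynomials to be weakly continuous on bounded sets, hence negligible.
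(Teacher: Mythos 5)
Your proof is correct, and it is essentially the argument the paper has in mind: the paper omits the proof, referring to Proposition~3.7 of \cite{Dim}, whose scalar-valued argument is exactly your scheme --- build the finite-type polynomial $R$ with $R(u)=v$ and $\|R\|\le 1$, invoke the pointwise-approximation characterization (Theorem~\ref{equivalencias}(iii)) with $Q=R$, evaluate at $u+x_\alpha$, and use $n=cd(E,F)$ to kill the mixed terms $\overset\vee P(u^{n-k},x_\alpha^k)$ for $1\le k\le n-1$. The bookkeeping with the two $\limsup$'s is handled correctly, so nothing is missing.
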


\begin{theorem}\label{n-prop-M}
Let $E$ and $F$ be Banach spaces and suppose that there exists a net
of compact operators $\{K_\alpha\}_\alpha\in \mathcal{K}(E)$
satisfying the following two conditions:
\begin{itemize}
\item $K_\alpha (x)\to x$, for all $x\in E$ and $K_\alpha^* (x^*) \to x^*$, for all $x^*\in E^*$.
\item $\|Id - 2 K_\alpha\|\underset{\alpha}{\longrightarrow} 1$.
\end{itemize}
Suppose also that $n=cd(E,F)$. Then,
$\mathcal{P}_w(^nE,F)$ is an $M$-ideal in $\mathcal{P}(^nE,F)$ if and
only if $(E,F)$ has the $n$-polynomial property $(M)$.
\end{theorem}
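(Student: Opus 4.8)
The plan is to prove the two implications separately. The direction ``$M$-ideal $\Rightarrow$ $n$-polynomial property $(M)$'' requires no hypothesis on the nets $\{K_\alpha\}_\alpha$: it is exactly the preceding Proposition (valid because $n=cd(E,F)$), so there is nothing further to do. The substance is therefore entirely in the converse, ``$n$-polynomial property $(M)$ $\Rightarrow$ $M$-ideal'', and I would follow the blueprint of \cite[Theorem 3.9]{Dim}, checking that each step survives the passage to vector-valued $F$.

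First I would verify the 3-ball property using Theorem A(iii): fix $P_1,P_2,P_3\in B_{\mathcal{P}_w(^nE,F)}$, $Q\in B_{\mathcal{P}(^nE,F)}$ and $\varepsilon>0$, and look for $P\in\mathcal{P}_w(^nE,F)$ with $\|Q+P_j-P\|\le 1+\varepsilon$ for $j=1,2,3$. The natural candidate, as in the scalar case and in Proposition \ref{condition}, is $P=Q\circ K_\alpha$ for a suitable index $\alpha$: this lies in $\mathcal{P}_w(^nE,F)$ because $n=cd(E,F)$ (the argument is the one used in the proof of \cite[Proposition 2.2]{Dim}, recalled in Theorem \ref{largo}). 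Since $K_\alpha(x)\to x$ pointwise and the $P_j$ are compact, Lemma \ref{Pw} gives $\|P_j-P_j\circ K_\alpha\|\to 0$, so it suffices to bound $\|Q+P_j\circ K_\alpha - Q\circ K_\alpha\|$. Writing $y=x-K_\alpha(x)$ and $u=K_\alpha(x)$ for $x\in B_E$, this norm is $\sup_x\|Q(u+y)-Q(u)+P_j(u)\|$ up to the vanishing error, and one wants to compare it with $\|x\|^n\le 1$.

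The heart of the matter is to convert the $n$-polynomial property $(M)$ into precisely this $(1+\varepsilon)$-estimate. I would argue by contradiction: if no index $\alpha$ works, then for each $\alpha$ one can choose $j(\alpha)$ and a point $x_\alpha\in B_E$ at which the bound fails by more than $\varepsilon$. Passing to subnets, $\{K_\alpha(x_\alpha)\}_\alpha$ and the relevant values of the $P_j$'s stay in compact sets (compactness of $K_\alpha$ and of the $P_j$), while $\{x_\alpha-K_\alpha(x_\alpha)\}_\alpha$ is bounded and, by the condition $K_\alpha^*(x^*)\to x^*$, weakly null. Applying the net version of property $(M)$, namely Lemma \ref{redes}, to the polynomial $Q$ (normalized, so $\|Q\|\le 1$) with $u_\alpha=K_\alpha(x_\alpha)$, $v_\alpha = $ the appropriate limit of $P_{j(\alpha)}$-values, and $x_\alpha$ replaced by $x_\alpha-K_\alpha(x_\alpha)$, one gets $\limsup\|v_\alpha+Q(x_\alpha-K_\alpha(x_\alpha))\|\le\limsup\|u_\alpha+x_\alpha-K_\alpha(x_\alpha)\|^n = \limsup\|x_\alpha\|^n\le 1$, contradicting the failure by $\varepsilon$. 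The one place where the hypothesis $\|Id-2K_\alpha\|\to 1$ is genuinely needed is in making the bookkeeping of the three balls simultaneous and in controlling the ``cross terms'' $Q(u+y)-Q(u)$ together with $P_j(u)$; I expect this to be the main obstacle, since in the vector-valued setting $Q(u+y)-Q(u)$ is not itself a value of an $n$-homogeneous polynomial and one must organize the estimate so that Lemma \ref{redes} applies cleanly. Once the three simultaneous $(1+\varepsilon)$-bounds are in hand, Theorem A gives that $\mathcal{P}_w(^nE,F)$ is an $M$-ideal in $\mathcal{P}(^nE,F)$, completing the equivalence.
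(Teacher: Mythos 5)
The forward implication is indeed immediate from the proposition preceding the theorem, and your overall plan (adapt \cite[Theorem 3.9]{Dim}, using Lemma \ref{redes} to convert the $n$-polynomial property $(M)$ into the ball estimates) is the route the paper itself indicates. However, your sketch of the converse has a genuine gap at the decisive step, and also starts from a less convenient candidate. The choice $P=Q\circ K_\alpha$ produces the term $Q(u+y)-Q(u)+P_j(u)$, which, as you yourself observe, is not of the form ``compactly-varying vector plus a value of an $n$-homogeneous polynomial at a weakly null net'' and so cannot be fed to property $(M)$. The candidate used throughout this circle of ideas (Proposition \ref{condition}, Theorem \ref{largo}, Proposition \ref{F Moo-space-bis}, and \cite[Proposition 2.2]{Dim}) is $P=Q-Q\circ(Id-K_\alpha)$, which belongs to $\mathcal{P}_{w0}(^nE,F)=\mathcal{P}_w(^nE,F)$ precisely because $n=cd(E,F)$, and which turns $Q+P_j-P$ into $P_j+Q\circ(Id-K_\alpha)$; after replacing $P_j$ by $P_j\circ K_\alpha$ via Lemma \ref{Pw}, one is left with bounding $\|P_j(K_\alpha x)+Q((Id-K_\alpha)x)\|$, which is exactly the shape that property $(M)$ is designed to control. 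Your ``main obstacle'' is thus largely an artifact of the candidate you chose.

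The more serious problem is your application of Lemma \ref{redes}: you set $u_\alpha=K_\alpha(x_\alpha)$ and assert that this net stays in a compact set ``by compactness of $K_\alpha$''. That is false. Each $K_\alpha$ is a compact operator, but the diagonal net $\{K_\alpha(x_\alpha)\}_\alpha$, with both indices moving together, is merely bounded (take $K_m=\pi_m$ on $\ell_p$ and $x_m$ supported in $[1,m]$ running through a bounded, non-relatively-compact set). Lemma \ref{redes} explicitly requires $\{u_\alpha\}_\alpha$ to be contained in a compact subset of $E$ --- that hypothesis is what permits extracting norm-convergent subnets --- so the inequality $\limsup\|v_\alpha+Q(x_\alpha-K_\alpha x_\alpha)\|\le\limsup\|K_\alpha x_\alpha+(x_\alpha-K_\alpha x_\alpha)\|^n\le 1$ is not justified as written; only the values $P_j(K_\alpha x_\alpha)$ lie in a compact set (because $P_j$ is a compact polynomial), not the points $K_\alpha x_\alpha$ themselves. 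Closing this gap is precisely where the hypothesis $\|Id-2K_\alpha\|\to 1$ and the finer mechanism of \cite[Theorem 3.9]{Dim} (going back to \cite{KW} and to the characterization recorded in Theorem \ref{equivalencias}) must enter; it is not, as you suggest, a matter of bookkeeping the three balls simultaneously. As it stands, the key estimate of the converse direction is unproved.
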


Sometimes it is possible to infer $M$-structures in the space of linear continuous operators from the existence of geometric structures on the underlying space. For instance, it is proved in \cite[Theorem VI.4.17]{HWW} that $\mathcal K(E)$ is an $M$-ideal in $\mathcal L(E)$ if and only if $E$ has property $(M)$ and satisfies both conditions of the theorem above. A similar result \cite[Theorem~3.9]{Dim} is obtained in the scalar-valued polynomial setting for $n=cd(E)$ using the polynomial property (M). The following proposition (which is the vector-valued polynomial version of \cite[Lemma VI.4.14]{HWW} and \cite[Proposition 3.10]{Dim}) paves the way to connect the linear $M$-structure with $M$-ideals in vector valued polynomial spaces.

\begin{proposition}\label{M}
Let $E$ and $F$ be  Banach spaces and
$n=cd(E,F)<cd(E)$. If $E$ and $F$ have the property
$(M)$, then $(E,F)$ has the $n$-polynomial property $(M)$.
\end{proposition}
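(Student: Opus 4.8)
The plan is to show directly that every $P\in\mathcal{P}(^nE,F)$ with $\|P\|\le 1$ satisfies the defining inequality of property $(M)$. So I fix $u\in E$, $v\in F$ with $\|v\|\le\|u\|^n$, a bounded weakly null net $\{x_\alpha\}_\alpha\subset E$, and write $c=\limsup_\alpha\|u+x_\alpha\|$; note $c\ge\|u\|$ by weak lower semicontinuity of the norm, so in particular $\|v\|\le\|u\|^n\le c^n$. The degenerate cases are disposed of first: if $u=0$ then $v=0$ and $\|P(x_\alpha)\|\le\|x_\alpha\|^n$ settles the claim, while if $P=0$ it is immediate; so from now on $u\ne 0$ and $P\ne 0$.

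Next I would extract the two structural facts provided by the hypotheses $n=cd(E,F)<cd(E)$. Since $n<cd(E)$, every scalar $n$-homogeneous polynomial on $E$ is weakly continuous on bounded sets; composing $P$ with functionals in $F^*$ shows that $P$ is weak-to-weak continuous on bounded sets, so $\{P(x_\alpha)\}_\alpha$ is a bounded weakly null net in $F$ --- this is where property $(M)$ of $F$ will enter. Since $n=cd(E,F)$, for each fixed $a\in E$ every ``cross-term'' polynomial $x\mapsto\overset\vee P(a^{n-k},x^{k})$ (meaning $\overset\vee P$ evaluated at $n-k$ copies of $a$ and $k$ copies of $x$), $1\le k\le n-1$, belongs to $\mathcal{P}_w(^kE,F)$, hence is weakly continuous on bounded sets. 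Expanding $P(a+x_\alpha)$ by the multinomial formula, the cross terms tend to $0$, so $P(a+x_\alpha)-P(a)-P(x_\alpha)\to 0$ and therefore
$$
\limsup_\alpha\|P(a)+P(x_\alpha)\|=\limsup_\alpha\|P(a+x_\alpha)\|\le\|P\|\,\limsup_\alpha\|a+x_\alpha\|^n\qquad(a\in E).
$$
I will also use the elementary estimate, valid in any Banach space for any fixed $a$ and any bounded weakly null net: $\limsup_\alpha\|a+\lambda x_\alpha\|\le\limsup_\alpha\|a+x_\alpha\|$ for $0\le\lambda\le 1$, which follows by writing $a+\lambda x_\alpha=(1-\lambda)a+\lambda(a+x_\alpha)$, invoking convexity of the norm and $\|a\|\le\limsup_\alpha\|a+x_\alpha\|$.

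The heart of the argument is a ``norming point'' device. Given $\varepsilon>0$, choose $x_0\in S_E$ with $\|P(x_0)\|>(1-\varepsilon)\|P\|$, put $T=\|u\|/\|P\|^{1/n}$ (so $T\ge\|u\|$ because $\|P\|\le 1$) and $\widetilde u=Tx_0$; then $\|\widetilde u\|=T$ and $\|P(\widetilde u)\|=T^n\|P(x_0)\|>(1-\varepsilon)\|P\|\,T^n=(1-\varepsilon)\|u\|^n\ge(1-\varepsilon)\|v\|$. Using property $(M)$ of $E$ to compare $\widetilde u$ with the vector $\tfrac{T}{\|u\|}u$ of the same norm, followed by the elementary estimate with $\lambda=\|u\|/T$,
$$
\limsup_\alpha\|\widetilde u+x_\alpha\|\le\limsup_\alpha\Big\|\tfrac{T}{\|u\|}u+x_\alpha\Big\|=\tfrac{T}{\|u\|}\limsup_\alpha\Big\|u+\tfrac{\|u\|}{T}x_\alpha\Big\|\le\tfrac{T}{\|u\|}\,c .
$$
Feeding this into the previous display and using $T^n=\|u\|^n/\|P\|$,
$$
\limsup_\alpha\|P(\widetilde u)+P(x_\alpha)\|\le\|P\|\Big(\tfrac{T}{\|u\|}\Big)^{\!n}c^n=c^n .
$$
Finally, property $(M)$ of $F$ applied to the bounded weakly null net $\{P(x_\alpha)\}_\alpha$, together with $\|(1-\varepsilon)v\|\le\|P(\widetilde u)\|$, gives $\limsup_\alpha\|(1-\varepsilon)v+P(x_\alpha)\|\le\limsup_\alpha\|P(\widetilde u)+P(x_\alpha)\|\le c^n$; letting $\varepsilon\to 0$ yields $\limsup_\alpha\|v+P(x_\alpha)\|\le c^n=\limsup_\alpha\|u+x_\alpha\|^n$, which is the required inequality.

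The point that needs real care is the size bookkeeping when $\|P\|<1$: a norming vector of norm $\|u\|$ only controls translates by vectors of norm at most roughly $\|P\|\,\|u\|^n$, which need not reach $v$; taking $\widetilde u$ of the larger norm $T=\|u\|/\|P\|^{1/n}$ repairs this precisely because the two ``free'' tools available --- property $(M)$ of $E$ and the convexity estimate that shrinks the perturbation $x_\alpha$ --- are exactly what make the factor $\|P\|(T/\|u\|)^n$ collapse to $1$. Once this balance is set up, the remainder is the routine transcription of the scalar arguments of \cite[Lemma VI.4.14]{HWW} and \cite[Proposition 3.10]{Dim} to the vector-valued polynomial setting.
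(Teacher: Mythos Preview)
Your proof is correct and follows the same strategy as the paper's: use $n<cd(E)$ to make $\{P(x_\alpha)\}$ weakly null in $F$, use $n=cd(E,F)$ to kill the cross terms so that $\limsup\|P(a)+P(x_\alpha)\|=\limsup\|P(a+x_\alpha)\|$, pick a near-norming point for $P$ to manufacture a vector $P(\widetilde u)$ dominating $(1-\varepsilon)v$, and then apply property $(M)$ in $F$ and in $E$. The one genuine deviation is in the bookkeeping for $\|P\|<1$: the paper first proves the case $\|P\|=1$ (where $\widetilde x=\|v\|^{1/n}x$ already has norm $\le\|u\|$, so property $(M)$ of $E$ applies directly) and then reduces $\|P\|<1$ to the norm-one case by the convex identity
\[
v+P(x_\alpha)=\tfrac{1+\|P\|}{2}\Big(v+\tfrac{P}{\|P\|}(x_\alpha)\Big)+\tfrac{1-\|P\|}{2}\Big(v-\tfrac{P}{\|P\|}(x_\alpha)\Big).
\]
You instead treat all $\|P\|\le 1$ at once by taking the larger norming vector $\widetilde u$ of norm $T=\|u\|/\|P\|^{1/n}$ and compensating with the convexity estimate $\limsup\|u+\lambda x_\alpha\|\le\limsup\|u+x_\alpha\|$; the factor $\|P\|(T/\|u\|)^n$ then collapses to $1$. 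Both devices are elementary and equally short; yours has the minor advantage of avoiding a case split, while the paper's avoids introducing the auxiliary estimate.
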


\begin{proof}
 Let $P\in\mathcal{P}(^nE,F)$ with $\|P\|= 1$. Fix
$u\in E$, $v\in F$ with $\|v\|\leq\|u\|^n$ and a bounded weakly null net
$\{x_\alpha\}_\alpha\subset E$. We want to
prove that
$$
\limsup_\alpha \| v +P(x_\alpha)\|\leq \limsup_\alpha
\|u+x_\alpha\|^n.
$$
Given $\varepsilon >0$, take $x\in S_E$ such that $\|P(x)\|>1-\varepsilon$ and $\tilde x=\|v\|^{\frac 1n} x$.  Then, $(1-\varepsilon)\|v\| < \|P(\tilde x)\|\le \|\tilde x\|\le \|u\|$.
As $n<cd(E)$, every scalar valued polynomial in $\mathcal P(^n E)$ is weakly continuous on bounded sets. Then, $P$ is weak-to-weak continuous and $P(x_\alpha) \overset{w}{\rightarrow} 0$. Therefore, since $F$ has property $(M)$,

\begin{eqnarray*}
\limsup_\alpha \|(1-\varepsilon)v + P(x_\alpha)\|&\leq & \limsup_\alpha
\|P (\tilde x) +P(x_\alpha)\|\\
&=&\limsup_\alpha \|P(\tilde x +x_\alpha)\| \\
&\leq & \limsup_\alpha \|\tilde x +x_\alpha\|^n \\
&\leq & \limsup_\alpha \|u+x_\alpha\|^n,
\end{eqnarray*}
where the last inequality holds since $E$ has property $(M)$. Now, letting $\varepsilon\to 0$ we obtain the desired inequality.

If $\|P\|<1$, the result follows from the previous case through the following convex combination
$$v +P(x_\alpha)=\frac{1+\|P\|}2\left(v + \frac{P}{\|P\|}(x_\alpha)\right) + \frac{1-\|P\|}2\left(v - \frac{P}{\|P\|}(x_\alpha)\right).$$
\end{proof}

Now we can lift $M$-structures from the linear to the vector-valued polynomial context. This is done for the particular case of $n$-homogeneous polynomials when $n$ is the critical degree of the pair $(E, F)$ and it is strictly less than the critical degree of the domain space $E$. We do not know if the result remains true even for the case $n=cd(E,F)=cd(E)$.

\begin{corollary}\label{lineal}
Let $E$ and $F$ be  Banach spaces and
$n=cd(E,F)<cd(E)$. If $\mathcal{K}(E)$ is
an $M$-ideal in $\mathcal{L}(E)$  and $F$ has property $(M)$, then $\mathcal{P}_w(^nE,F)$ is an
$M$-ideal in $\mathcal{P}(^nE,F)$.
\end{corollary}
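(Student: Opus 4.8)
The plan is to assemble the corollary directly from the three preceding results, so the proof will be short. First I would invoke \cite[Theorem VI.4.17]{HWW}, which asserts that $\mathcal{K}(E)$ is an $M$-ideal in $\mathcal{L}(E)$ if and only if $E$ has property $(M)$ and there exists a net of compact operators $\{K_\alpha\}_\alpha\subset\mathcal{K}(E)$ satisfying both conditions of Theorem~\ref{n-prop-M}, namely $K_\alpha(x)\to x$ for all $x\in E$, $K_\alpha^*(x^*)\to x^*$ for all $x^*\in E^*$, and $\|Id-2K_\alpha\|\to 1$. Thus the hypothesis that $\mathcal{K}(E)$ is an $M$-ideal in $\mathcal{L}(E)$ simultaneously yields that $E$ has property $(M)$ and that such a net $\{K_\alpha\}_\alpha$ exists.

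Next, since $F$ also has property $(M)$ by assumption and $n=cd(E,F)<cd(E)$, Proposition~\ref{M} applies and gives that the pair $(E,F)$ has the $n$-polynomial property $(M)$. Finally, the net $\{K_\alpha\}_\alpha$ produced in the first step meets exactly the two hypotheses on compact operators required in Theorem~\ref{n-prop-M}, and $n=cd(E,F)$, so Theorem~\ref{n-prop-M} tells us that $\mathcal{P}_w(^nE,F)$ is an $M$-ideal in $\mathcal{P}(^nE,F)$ if and only if $(E,F)$ has the $n$-polynomial property $(M)$; having verified the latter, we conclude that $\mathcal{P}_w(^nE,F)$ is an $M$-ideal in $\mathcal{P}(^nE,F)$.

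There is no real obstacle here: all the substantive work lies in Proposition~\ref{M} (the geometric transfer of property $(M)$ from $E$ and $F$ to the polynomial setting) and in Theorem~\ref{n-prop-M} (the equivalence between the $M$-ideal property and the $n$-polynomial property $(M)$ under the approximating-net hypothesis). The only thing to check while writing up is the bookkeeping point that the pair of conditions on $\{K_\alpha\}_\alpha$ appearing in the linear characterization \cite[Theorem VI.4.17]{HWW} coincides verbatim with the two bullet points of Theorem~\ref{n-prop-M}; once that is observed, the corollary follows by a direct chain of implications.
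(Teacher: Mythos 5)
Your proposal is correct and follows exactly the same route as the paper's proof: extract property $(M)$ for $E$ and the approximating net $\{K_\alpha\}_\alpha$ from the linear $M$-ideal hypothesis via \cite[Theorem VI.4.17]{HWW}, apply Proposition~\ref{M} to obtain the $n$-polynomial property $(M)$ for $(E,F)$, and conclude with Theorem~\ref{n-prop-M}. Nothing is missing.
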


\begin{proof}
If $\mathcal{K}(E)$ is an $M$-ideal in $\mathcal{L}(E)$, appealing to \cite[Theorem VI.4.17]{HWW}, $E$ has property $(M)$ and we may find $\{K_\alpha\}_\alpha\subset \mathcal{K}(E)$ a net of compact operators satisfying both conditions of Theorem~\ref{n-prop-M}. By Proposition~\ref{M}, $(E,F)$ has the $n$-polynomial property $(M)$. Now, we may apply  Theorem~\ref{n-prop-M} to derive the result.
\end{proof}

We finish this section applying the previous result to give some examples of $M$-ideals of polynomials between Bergman and $\ell_p$ spaces.

\begin{example}\rm
The Bergman space $B_p$ is the
space of all holomorphic functions in $L_p(\mathbb{D},dxdy)$, where $\mathbb{D}$ is the complex disc. If
$1<p<\infty$, $B_p$ is isomorphic to $\ell_p$ \cite[Theorem
III.A.11]{Woj} and so, for $1<p,q<\infty$,
$$
cd(\ell_p,\ell_q)=cd(\ell_p,B_q)=cd(B_p,\ell_q)=cd(B_p,B_q).
$$
Since, by \cite[Corollary 4.8]{KW}, $\mathcal{K}(B_p)$
is an $M$-ideal in $\mathcal{L}(B_p)$, we obtain from Corollary \ref{lineal}, that, if $n=cd(\ell_p,\ell_q)<cd(\ell_p)$, then:
\begin{itemize}
\item $\mathcal{P}_w(^n\ell_p,B_q)$
is an $M$-ideal in $\mathcal{P}(^n\ell_p,B_q)$.
\item $\mathcal{P}_w(^nB_p,\ell_q)$
is an $M$-ideal in $\mathcal{P}(^nB_p,\ell_q)$.
\item $\mathcal{P}_w(^nB_p,B_q)$
is an $M$-ideal in $\mathcal{P}(^nB_p,B_q)$.
\end{itemize}
\end{example}

\end{document}